\numberwithin{equation}{section}
\theoremstyle{plain}
\newtheorem{example}{Example}
\newtheorem{theorem}{Theorem}
\newtheorem{lemma}{Lemma}
\newtheorem{corollary}{Corollary}
\newtheorem{proposition}{Proposition}
\newtheorem{remark}{Remark}
\definecolor{blue-violet}{rgb}{0.56, 0.0, 1.0}
\definecolor{beaublue}{rgb}{0.74, 0.83, 0.9}
\def\pico{\color{black}}
\def\picotwo{\color{black}}
\newcommand{\numbereqn}{\addtocounter{equation}{1}\tag{\theequation}} % use \numberthis to add number in align* mode
\newcommand\bigsubset[1][1.44]{%
	\mathrel{\vcenter{\hbox{\scalebox{#1}{$\subset$}}}}}
\begin{document}

\begin{frontmatter}
\title{Two-Step Mixed-Type Multivariate Bayesian Sparse Variable Selection with Shrinkage Priors}%\support{This work was partially supported by National Science Foundation grants DMS-1924792 (Huang) and DMS-2015528 (Bai) and Ministry of Science and Technology grant 109-2118-M-008-005 (Wang).}}
\runtitle{Mixed-type Multivariate Bayesian Model with Shrinkage Priors}

\begin{aug}
\author{\fnms{Shao-Hsuan} \snm{Wang}\thanksref{t1}\ead[label=e1]{picowang@gmail.com}}
% \and
% \author{\fnms{Second} \snm{Author}\thanksref{t3}\ead[label=e2]{second@somewhere.com}}

\address{Graduate Institute of Statistics, National Central University, \\
	 Taoyuan City 32001, Taiwan \\
\printead{e1}}

\author{\fnms{Ray} \snm{Bai}\thanksref{t2}\ead[label=e2]{rbai@mailbox.sc.edu}}
% \ead[label=u1,url]{www.foo.com}}

\address{Department of Statistics, University of South Carolina, \\
	Columbia, South Carolina 29225, USA \\
\printead{e2}}
% \printead{u1}}

\author{\fnms{Hsin-Hsiung} \snm{Huang}\thanksref{t3}\ead[label=e3]{Hsin-Hsiung.Huang@ucf.edu}}
% \ead[label=u1,url]{www.foo.com}}

\address{Department of Statistics and Data Science, University of Central Florida,  \\
	Orlando, Florida 32826, USA \\
\printead{e3}}

\thankstext{t1}{Supported by Ministry of Science and Technology grant 109-2118-M-008-005.}
\thankstext{t2}{Supported by National Science Foundation grant DMS-2015528.}
\thankstext{t3}{Supported by National Science Foundation grants DMS-1924792, DMS-2318925.}
\runauthor{Wang et al.}

\end{aug}

\begin{abstract}
\noindent  We introduce a Bayesian framework for mixed-type multivariate regression using continuous shrinkage priors. Our framework enables joint analysis of mixed continuous and discrete outcomes and facilitates variable selection from the $p$ covariates. Theoretical studies of Bayesian mixed-type multivariate response models have not been conducted previously and require more intricate arguments than the corresponding theory for univariate response models due to the correlations between the responses. In this paper, we investigate necessary and sufficient conditions for posterior contraction of our method when $p$ grows faster than sample size $n$. The existing literature on Bayesian high-dimensional asymptotics has focused only on cases where $p$ grows subexponentially with $n$. In contrast, we study the asymptotic regime where $p$ is allowed to grow \textit{exponentially} {\picotwo in} terms of $n$. We develop a novel two-step approach for variable selection which possesses the sure screening property and provably achieves posterior contraction even under exponential growth of $p$. We demonstrate the utility of our method through simulation studies and applications to real data, {\pico including a cancer genomics dataset where $n=174$ and $p=9183$}. The R code to implement our method is available at \url{https://github.com/raybai07/MtMBSP}. 
\end{abstract}

\begin{keyword}[class=MSC]
\kwd[Primary ]{62F15}
\kwd{62F12}
\kwd[; secondary ]{62J12}
\end{keyword}

\begin{keyword}
\kwd{global-local shrinkage prior}
\kwd{mixed responses}
\kwd{multivariate regression}
\kwd{posterior contraction}
\kwd{ultra-high dimensionality}
\kwd{variable screening}
\end{keyword}
\tableofcontents
\end{frontmatter}

\newpage

\section{Introduction} \label{intro}

\subsection{Motivation}

Mixed-type responses (i.e. continuous, categorical, and count data) are commonly encountered. For example, in developmental toxicology, a group of pregnant dams may be exposed to a test substance, and the outcomes of interest include the presence or absence of birth defects or fetal death (binary responses), fetal weight and dam's uterine weight (continuous responses), and the number of pups in each litter (a count response) \citep{DunsonJRSSB2000, HwangPennell2014,ReganCatalano1999}. In clinical trials, mixed-type outcomes are used to assess the efficacy and safety of treatments or interventions. Efficacy is often quantified by a continuous variable, while safety is a binary variable \citep{Stamey2013, ZhouPharmaceutical2006}. In manufacturing systems, performance metrics are frequently a mix of continuous and discrete responses \citep{chen2022bayesian, DengJin2015, KangJQT2018}. Finally, multivariate counts and categorical data often arise in image classification analysis \citep{zhou2014regularized} and in gene expressions of patients with different cancer types \citep{salman2018impact}. 

Analysts often wish to jointly model $q$ mixed-type responses given a set of $p$ covariates, since the responses are correlated and the $p$ predictors jointly affect all $q$ responses. However, specifying a joint likelihood function is difficult. Thus, correlations between the responses are usually accommodated by employing shared latent variables or shared random effects \citep{chen2022bayesian, DunsonJRSSB2000, McCulloch2008, WagnerTuchler2010} or by factorizing the joint likelihood as a product of conditional densities and a marginal density \citep{CoxWermuth1992,DengJin2015, FitzmauriceJASA1995}. In this paper, we adopt the first approach and use a latent vector to connect the $q$ mixed responses and the $p$ predictor variables. 

\subsection{Related works}

Bayesian approaches are appealing because they provide natural uncertainty quantification through the posterior distribution. However, it has historically been challenging to implement Bayesian models for mixed-type multivariate regression, especially if $p$ is moderate or large. Several Bayesian computational tools, such as integrated nested Laplace approximation (INLA) \citep{RueINLA2009} and Hamiltonian Monte Carlo (HMC) \citep{neal2011mcmc} are only appropriate for small $p$ \cite{BradleyBA2022}. Other Bayesian approaches require inefficient rejection samplers or Metropolis-Hastings (MH) algorithms that are difficult to tune \citep{BradleyBA2022}. To address these challenges, \cite{WagnerTuchler2010} proposed to use mixtures of normal distributions to approximate the residual errors for logistic and log-linear models with shared random effects. These approximations permit closed-form Gibbs sampling updates, but the subsequent algorithm is no longer exact. Recently, \cite{BradleyBA2022} introduced a hierarchical generalized transformation model where the mixed-typed responses are first transformed to continuous variables and then an exact Gibbs sampler is applied to the transformed responses. While this alleviates some computational difficulties, the regression coefficients may be less interpretable since the transformation changes the scale of the original responses. 

In this paper, we develop a new approach for mixed-type multivariate Bayesian regression based on P\'{o}lya–gamma (PG) augmented latent variables \citep{polson2013bayesian}. All of the conditional distributions in our model are tractable. Thus, our method can be implemented with an exact Gibbs sampling algorithm, and no transformations of the responses or approximations for residual error terms are required to fit it. While PG data augmentation (PGDA) \citep{he2024framework, polson2013bayesian} has been widely used for \textit{univariate} Bayesian regression models with discrete responses, PGDA has surprisingly been overlooked for Bayesian analysis of \textit{mixed-type multivariate} data. A strength of our proposed method is its simplicity and the fact that it allows for exact Markov chain Monte Carlo (MCMC) sampling from the posterior without transforming the data \cite{BradleyBA2022}.

In addition to its computational simplicity, our method also performs variable selection. Earlier works on mixed-type multivariate regression models (e.g. \cite{BradleyBA2022,DengJin2015,DunsonJRSSB2000,ZhouPharmaceutical2006}) did not consider variable selection from the $p$ covariates. However, if $p$ is moderate or large, then variable selection is often desirable. Recently, several variable selection methods for mixed-type multivariate regression models have been proposed. From the frequentist perspective, \cite{kang2020multivariate} introduced the Multivariate Regression of Mixed Responses (MRMR) method which applies an $\ell_1$ penalty on the regression coefficients. However, the MRMR method of \cite{kang2020multivariate} only provides point estimates and lacks inferential capabilities. \cite{chen2022bayesian} recently proposed the Bayesian sparse multivariate regression for mixed responses (BS-MRMR) model which uses spike-and-slab priors to identify significant variables and provides uncertainty quantification through the posterior distributions.

We pause here to highlight the main differences between our method and the BS-MRMR method of \cite{chen2022bayesian}. First, we use \textit{continuous} global-local shrinkage (GL) priors, whereas \cite{chen2022bayesian} use \textit{point-mass} spike-and-slab priors. In particular, \cite{chen2022bayesian} only consider cases where $p<n$, and in their numerical and real data examples, $p$ is no larger than 80. In contrast, we consider cases where $p \gg n$, {\pico including a real application where $p=9183$ and $n=174$}. In the past, global-local priors enjoyed significant computational advantages over point-mass spike-and-slab priors \citep{BhadraISR2020,BhadraDattaPolsonWillard2017, BhattacharyaPatiPillaiDunson2015}. For example, \cite{KonerWilliams2023} and \cite{KunduMitraGaskins2021} give numerical examples where it is very computationally challenging or even impractical to fit multivariate Bayesian regression models with point-mass spike-and-slab priors when $p$ is large. Recently, however, there has been some work that greatly improves the computational efficiency of point-mass spike-and-slab priors \citep{GriffinBiometrika2020, ZhouJRSSB2022}. Unfortunately, these promising new spike-and-slab approaches are restricted to univariate Gaussian linear regression and have yet to be explored for mixed-type multivariate regression models. In this work, we focus on GL priors primarily due to their analytical convenience, especially in simplifying posterior sampling.  

Motivated by applications from genomics where $p$ is often larger than $n$, we develop a new Bayesian variable selection method for mixed-type multivariate regression using GL shrinkage priors. We call our approach the Mixed-type Multivariate Bayesian Model with Shrinkage Priors (Mt-MBSP). GL priors are especially convenient because of their representation as Gaussian scale mixtures, which allows for conjugate updates. This leads to a straightforward \textit{one-step} estimation procedure.  Here, ``one-step'' refers to fitting the Mt-MBSP model with all $p$ predictors only once and then using the subsequent posterior distribution for inference.
% To cope with the high computational cost when $p$ is large, we introduce a novel \textit{two-step} estimation procedure for Mt-MBSP.

Apart from their different prior distributions, a second major difference between our Mt-MBSP method and the BS-MRMR method of \cite{chen2022bayesian} is that we propose a \textit{two-stage} estimation method that first screens out many spurious variables in the first stage. In contrast, \cite{chen2022bayesian} do not perform a variable prescreening procedure before conducting final variable selection. {\pico It is well-established that when $p \gg n$, variable prescreening often improves the estimation and prediction accuracy of variable selection methods \citep{fan2008sure, narisetty2014bayesian}. For example, in the genomic data application in Section 9 of \cite{narisetty2014bayesian}, the authors employ prescreening to reduce the number of predictor genes from $p=22{,}575$ to $p \in \{ 200, 400 \}$ before fitting their Bayesian variable selection method.} 

Finally, \cite{chen2022bayesian} do not provide any theoretical justifications for BS-MRMR, whereas we theoretically characterize the asymptotic behavior of Mt-MBSP (both its one-step and two-step variants). Theory for Bayesian mixed-type multivariate regression models is very scarce. Due to the unknown correlations between multiple responses, theory for univariate response regression models does not trivially extend to the multivariate, mixed-type setting. Moreover, theoretical results established under the assumption of subexponential growth of $p$ do \textit{not} automatically hold in the setting of exponential growth, as we later show.

\subsection{Our contributions: methodological and theoretical novelties}

One of the contributions of this paper is to enhance theoretical understanding of Bayesian mixed response models. To the best of our knowledge, we present the first asymptotic results for Bayesian mixed-type multivariate regression.  First, we derive the posterior contraction rate for the one-step Mt-MBSP model when $p$ grows subexponentially with respect to $n$. We further establish that a subexponential growth rate of $p$ in $n$ is both necessary \textit{and} sufficient for posterior consistency. Although the Bayesian high-dimensional literature commonly makes this assumption \citep{BaiGhosh2018, chakraborty2020bayesian, narisetty2014bayesian}, it was not previously known whether subexponential growth of $p$ in $n$ is actually necessary or whether this can be relaxed (in either multivariate \textit{or} univariate Bayesian models). 

The assumption of subexponential growth may also be invalid in the era of ``big data,'' especially in applications involving high-throughput biological and genomic data \citep{fan2008sure}. This has prompted some researchers to study variable selection methods when $p$ can grow \textit{exponentially} fast with $n$ \citep{LahiriAOS2021}. However, the asymptotic regime where $p$ is allowed to diverge exponentially fast has not been studied before in the \textit{Bayesian} framework. 
In this paper, we derive a negative result showing that under standard assumptions on the design matrix and sparsity of the regression coefficients, exponential growth of $p$ with respect to $n$ can lead to posterior \textit{inconsistency} under the one-step Mt-MBSP method and other one-stage estimators.

To overcome the limitation of the one-step estimator and improve estimation when $p \gg n$, we introduce a novel \textit{two-step} estimation procedure. The first step of our two-stage approach screens out a large number of predictors. The second step then fits the Mt-MBSP model to \textit{only} the predictors that remain in the model after the first step. We prove that our proposed two-step algorithm has the sure screening property. Consequently, we show that our two-step estimator can consistently estimate the true regression coefficients \textit{even when} $p$ grows exponentially with $n$. Furthermore, even if $p$ does not grow exponentially fast, the two-step estimator still improves upon the one-step estimator with a sharper posterior contraction rate under the $p \gg n$ regime.

{\pico Two-stage estimators are frequently used in statistics. In this general class of estimators, a ``pilot'' estimate is first obtained, and then based on this pilot estimate, a second estimation procedure is done to yield an improved estimator. For example, a two-stage estimator may achieve an optimal or accelerated convergence rate or attain an oracle property that the one-stage estimator is incapable of achieving \citep{BelitserGhosalvanZanten2012, ChakrabortyGhosal2021, FanZhang1999,  TangBanerjeeMichailidis2011, YooGhosal2019,ZouJASA2006}. In the present context, our two-step Mt-MBSP corrects the potential inconsistency of one-step Mt-MBSP and achieves an accelerated convergence rate when $p > n$.}

Our main contributions can be summarized as follows:

\begin{enumerate}
	
\item Methodologically, we introduce the Mt-MBSP method for Bayesian estimation and variable selection in \textit{mixed-type} multivariate regression models even when $p \gg n$. Our method accommodates correlations between the mixed-type (discrete and continuous) response variables trough latent variables and facilitates variable selection from the $p$ covariates using continuous shrinkage priors. By employing PGDA, Mt-MBSP can also be fit with an exact Gibbs sampler.  	
\item Theoretically, we derive both necessary \textit{and} sufficient conditions for posterior contraction under the one-step Mt-MBSP model when $p \gg n$.  We shed new light on asymptotics for Bayesian high-dimensional regression models by formally establishing that subexponential growth of $p$ with respect to $n$ is a \textit{necessary} condition for the posterior consistency of one-stage estimators. The overwhelming majority of asymptotic theory for Bayesian high-dimensional regression models focuses only on sufficient conditions for consistency. Our necessary conditions are useful in that they imply conditions which, if violated, lead to posterior \textit{inconsistency}. 
	\item We introduce a two-step estimation procedure that screens out spurious variables \textit{and} that gives consistent estimation when $p$ grows \textit{exponentially} with respect to $n$. The proposed Bayesian prescreening procedure using GL priors is a new idea and can be applied to either multivariate or univariate regression models. We provide theoretical support for the two-step procedure by establishing its sure screening property and its posterior contraction rate when $p$ is allowed to grow exponentially fast with $n$. This ultrahigh-dimensional regime is different from the existing literature on Bayesian high-dimensional asymptotics, which has assumed \textit{subexponential} growth of $p$ with $n$.
\end{enumerate}
The rest of this paper is structured as follows. In Section \ref{method}, we introduce the Mt-MBSP method and discuss posterior sampling. Section \ref{theory} presents asymptotic results for the one-step Mt-MBSP estimator. In Section \ref{computation}, we introduce a novel two-step algorithm which we study theoretically. Section \ref{experiments} presents simulations and examples on real datasets. Section \ref{Discussion} concludes the paper. All of the proofs are deferred to Appendix \ref{S4:Proofs}.

\subsection{Notation}

For two sequences of positive real numbers $a_n$ and $b_n$, $a_n = o(b_n)$ means $\lim_{n \rightarrow \infty} a_n / b_n = 0$, while $a_n = O(b_n)$ means $|a_n / b_ n| \leq M$ for some positive real number $M$ independent of $n$. The $\ell_2$ norm of a vector $\mathbf{v}$ is denoted by $\lVert \mathbf{v} \rVert$, while the Frobenius norm of a matrix $\mathbf{A}$ is denoted by $\lVert \mathbf{A} \rVert_F$. For a {\picotwo square} matrix $\mathbf{C}$, we denote its {\picotwo trace by $\text{tr}(\mathbf{C})$} and {\picotwo its} minimum and maximum eigenvalues by $\lambda_{\min}(\mathbf{C})$ and $\lambda_{\max}(\mathbf{C})$ respectively. For a set $\mathcal{S}$, we denote its cardinality by $|\mathcal{S}|$ and for a subset $\mathcal{T} \subset \mathcal{S}$, $\mathcal{T}^{c}$ means $\mathcal{T}^{c} = \mathcal{S} \setminus \mathcal{T}$. Finally, a positive measurable function $L$ defined over $(c, \infty)$, for some $c \geq 0$, is said to be slowly varying if for every fixed $\alpha > 0$, $L(\alpha x)/L(x) \rightarrow 1$ as $x \rightarrow \infty$. 

\section{The Mt-MBSP Method}\label{method}
\setcounter{equation}{0}

\subsection{Joint modeling of mixed-type responses}
\label{MBSP-extension}

Let $\mathbf{Y}_n=(\mathbf{y}_1,\dots,\mathbf{y}_n)^\top$ be an
$n\times q$ response matrix of $n$ samples, where $\mathbf{y}_i =(y_{i1},\dots,y_{iq})^\top, i = 1,\dots,n$, are $q \times 1$ response vectors. Suppose that we have a mixture of continuous and discrete responses in $\mathbf{Y}_n$. 
Let $\bm{\theta}_i =(\theta_{i1},\dots,\theta_{iq})^\top$ be a $q \times 1$ natural parameter vector whose elements are related to a linear combination of covariates through a specified link function.

In our joint modeling approach, we model the \textit{continuous} responses in $\mathbf{y}_i$ using Gaussian densities, i.e. if the $k$th component {\picotwo $y_{ik}$} of each response vector $\mathbf{y}_i, i = 1, \ldots, n$, is continuous, then 
\begin{equation} \label{gaussianresponse}
	p_k(y_{ik} \mid \theta_{ik} ) = \frac{1}{(2\pi)^{1/2}} \exp\left( - \frac{(y_{ik} - \theta_{ik})^2}{2} \right).
\end{equation} 
Meanwhile, we model the \textit{discrete} responses in $\mathbf{y}_i$ using the following density function, i.e.  if the $\ell$th component of $\mathbf{y}_i$, $\ell \neq k$, is discrete, then 
\begin{eqnarray} 
	p_{\ell}(y_{i\ell}\mid \theta_{i\ell})= C \frac{\{\exp(\theta_{i\ell})\}^{f^{(1)}_{i\ell}}}{
		\{1+\exp(\theta_{i\ell})\}^{f^{(2)}_{i\ell}}}, ~~i=1,\dots,n,~~\ell=1,\dots,q,\label{polson_model}
\end{eqnarray}
where  $\{f^{(1)}_{i\ell}\}_{\ell=1}^q$ 
and $\{f^{(2)}_{i\ell}\}^q_{\ell=1}$ 
are values which may depend on $\mathbf{Y}_n$, and  $C$ is the normalizing constant. 
%{\blue The normal density in \eqref{gaussianresponse} is set to has a variance one for the sake of convenience}. [RB: Let's not draw attention to this. If a reviewer says anything, we can just let $\omega$ in eq. (6) be the precision parameter for Gaussian responses N(0, \omega^{-1}) and put a conditionally conjugate gamma prior on the precision parameter.]  
The family of densities
\eqref{polson_model} is quite broad and can model a variety of discrete data types. Some examples of response variables belonging to this family are listed below.

\begin{example}[Bernoulli] 
	Let $f^{(1)}=y\in\{0,1\}$ and $f^{(2)}=1$. Then
	\[
	P(Y=y\mid \theta) =
	\left(\frac{\exp(\theta)}{1+\exp(\theta)}\right)^y
	\left(\frac{1}{1+\exp(\theta)}\right)^{1-y}= 
	\frac{\{\exp(\theta)\}^y}{1+\exp(\theta)}.
	\]
\end{example}

\begin{example}[binomial]
	Let $f^{(1)}=y\in\{0,1,\dots,M\}$ and $f^{(2)}=M$. Then 
	\[
	P(Y=y\mid \theta,~~M) =
	\begin{pmatrix}
		M\\y
	\end{pmatrix} 
	p^y(1-p)^{M-y}  \propto 
	\frac{\{\exp(\theta)\}^{y}}{\{1+\exp(\theta)\}^{M}},
	\]
	where $p=\exp(\theta)/\{1+\exp(\theta)\}$. 
\end{example}

\begin{example}[multinomial]
	For multinomial response variables with $L > 2$ classes and $M$ trials, we can reexpress the multinomial density as
	\[ P(Y_1 = y_1, \ldots, Y_{L-1} = y_{L-1} \mid \theta_1, \ldots, \theta_{L-1}, M ) \propto \prod_{\ell=1}^{L-1} \frac{ \exp(\theta_\ell)^{y_\ell}}{ [1+\exp(\theta_\ell)]^{M_\ell}},
	\]
	where $M_1 = M$ and $M_\ell = M - \sum_{j < \ell} y_j$ for $2 \leq \ell \leq L-1$ {\rm \citep{LindermanNIPS2015}}. That is, for class $\ell \in \{1, \ldots, L-1 \}$, 
	\[ P(Y_{\ell} = y_\ell \mid \theta_\ell ) \propto \frac{\exp(\theta_\ell)^{y_\ell}}{\{ 1+\exp(\theta_\ell)\}^{M_\ell}}, \]
	i.e. $f_{\ell}^{(1)} = y_\ell \in \{ 0, 1, \ldots, M \}$ and $f_{\ell}^{(2)} = M_\ell$ in \eqref{polson_model}.
\end{example}

\begin{example}[negative binomial]
	Let $f^{(1)}={\picotwo r}$ and $f^{(2)}=y+r$, {\picotwo where $0 < r < \infty$}. Then
	\[
	P(Y=y\mid \theta, r) =
	{\picotwo \frac{\Gamma(y+r)}{y!~\Gamma(r)}
	p^r(1-p)^{y}}   \propto 
	\frac{\{\exp(\theta)\}^{\picotwo r}}{\{1+\exp(\theta)\}^{y+r}},
	\]
	{\picotwo where $\Gamma(\cdot)$ denotes the gamma function} and $p=\exp(\theta)/\{1+\exp(\theta)\}$.
\end{example}

\begin{remark}
	The Poisson distribution with rate parameter $\theta$ is a limiting case of the negative binomial (NB) distribution {\picotwo as $r \rightarrow \infty$}. To model equidispersed count data with \eqref{polson_model}, we can set the dispersion parameter $r$ to be large. For large $r$, the NB is practically indistinguishable from a {\picotwo Poisson {\rm \citep{he2024framework,jackman2009bayesian}}.} The NB is also more flexible in its ability to model overdispersed counts.
\end{remark}

\begin{remark}
{\picotwo For the unknown dispersion parameter $r$ in an NB random variable $m\sim{\rm NB}(r,p)$, we consider a gamma prior $\mathcal{G}(c_1,c_2)$. This allows for closed-form updates of $r$ in the Gibbs sampler. Based on Theorem 1 of {\rm\cite{zhou2013negative}}, 
\[{\rm CRT}(\ell\mid m, r)
{\rm NB}(m; r, p)
=
{\rm SumLog}(m\mid \ell, p)
{\rm Poisson}(\ell\mid -r\log(1-p)), 
\]
where the definitions of the Chinese restaurant table (CRT) and the SumLog distributions can be found in Theorem 1 of {\rm\cite{zhou2013negative}}. This implies that an NB random variable can be 
expressed as the following mixture model,
\[
{\rm NB}(m; r, p)
=
\sum_{\ell=0}^\infty 
{\rm SumLog}(m\mid \ell, p)
{\rm Poisson}(\ell\mid -r\log(1-p)), 
\]
which implies that 
\[
m \sim {\rm SumLog}(m \mid \ell, p),~~ \ell\sim{\rm Poisson} (\ell \mid -r\log(1-p)). 
\]
The conditional distribution of $r$ given the other parameters is then $\mathcal{G}(c_1+\ell, c_2-\log(1-p))$, since the gamma distribution is conjugate with the Poisson distribution.}
\end{remark}

The parametric family \eqref{polson_model} is especially convenient, as it facilitates closed-form updates for Gibbs sampling. In particular, \eqref{polson_model} enables a PGDA approach %and the corresponding  
%Gibbs sampling 
to easily sample from the posterior distributions of the parameters. A random variable $\omega$ is said to follow a P\'{o}lya–gamma distribution $\mathcal{PG}(a,b)$ with parameters $a>0$ and $b \in \mathbb{R}$ if 
\begin{align*}
	\omega = \frac{1}{2 \pi^2} \sum_{k=1}^{\infty} \frac{g_k}{\left( k- \frac{1}{2} \right)^2 + \left( \frac{b}{2 \pi} \right)^2},\mbox{~~and~~} g_k \sim \mathcal{G}(a,1).
\end{align*}
Let  ${\cal PG}(\omega\mid f^{(2)}, 0)$ denote the density function for ${\cal PG}(f^{(2)}, 0)$, and then we have the following identity from \cite{polson2013bayesian}:
{\pico \begin{equation*}
	p(y\mid \theta ) \propto \frac{\{\exp(\theta)\}^{f^{(1)}}}{\{1+\exp(\theta)\}^{f^{(2)}} }=
	{\picotwo 2^{-f^{(2)}}}e^{\kappa \theta} 
	\int^\infty_0 e^{-\omega\theta^2/2}\ \mathcal{PG}(\omega\mid f^{(2)},0) d\omega,
\end{equation*}}
which implies that {\picotwo
\begin{eqnarray}
p(y, \omega\mid \theta ) 
=
2^{-f^{(2)}}e^{\kappa \theta} 
	e^{-\omega\theta^2/2}\ \mathcal{PG}(\omega\mid f^{(2)},0), \label{wyz}
\end{eqnarray}
}where {\picotwo $\kappa= f^{(1)}- 0.5 f^{(2)}$}. 
Thus, conditional on $\omega$, the likelihood contribution in $\theta$ is
\begin{eqnarray} 
	p(\theta\mid \omega,~y) \propto p(\theta) e^{\kappa\theta}e^{-\omega \theta^2/2}
	~\propto~ p(\theta)\exp\left\{
	-\frac{\omega}{2}\left(\theta-\frac{\kappa}{\omega}\right)^2
	\right\}. \label{p1}
\end{eqnarray} 
On the other hand, if $y$ is \textit{continuous} and has the density \eqref{gaussianresponse}, then
\begin{eqnarray} 
	p(\theta\mid y) \propto  p(\theta)p(y\mid \theta)\propto p(\theta) \exp\left\{
	-\frac{1}{2}\left(\theta-y\right)^2
	\right\}. \label{p2}
\end{eqnarray} 
Let {\picotwo
\begin{equation} \label{p4}
	\left\{ \begin{array}{ll}
		z=y\mbox{~~and~~}\omega=1&\mbox{~~~if $y$ is continuous as in \eqref{gaussianresponse};}\\[.1in]
		 z=\frac{\kappa}{\omega}\mbox{~~and~~}\omega\sim {\cal PG}( f^{(2)},0)&
		\mbox{~~~if $y$ is discrete as in \eqref{polson_model}.}
	\end{array} \right.
\end{equation}}

\noindent From \eqref{p1}-\eqref{p2}, we can \textit{unify} continuous and discrete outcomes with the following conditional density of $\theta$, {\picotwo
\begin{eqnarray} 
p(\theta\mid \omega, y)=p(\theta\mid \omega, z)\propto p(\theta) \exp\left\{
	-\frac{\omega}{2}\left(z-\theta\right)^2
	\right\}. \label{p3}
\end{eqnarray} 
It is noted that $y$ is determined by $\omega$ and $z$. }
Our construction \eqref{p4}-\eqref{p3} allows for simplicity of Bayesian computation. In particular, if $p(\theta)$ in \eqref{p3} is a Gaussian density, then the conditional density $p(\theta\mid\omega,z)$ is \textit{also} Gaussian. This implies that modeling the $q$ responses through a latent Gaussian distribution leads to tractable full conditionals.

\subsection{Prior specification} \label{PGDA}
Having specified the form for the $q$ responses in $\mathbf{y}_i$ in \eqref{gaussianresponse}-\eqref{polson_model}, we now jointly model them given a set of $p$ covariates $\mathbf{x}_i$, where $\mathbf{x}_i = (x_{i1}, \ldots x_{ip})^\top$. Let $\mathbf{B}$ denote a $p \times q$ unknown matrix of regression coefficients. We model the natural parameter $\bm{\theta}_{i} {\picotwo = (\theta_{i1}, \ldots, \theta_{iq})^\top}$ in \eqref{gaussianresponse}-\eqref{polson_model} with covariates $\mathbf{x}_i$ as follows: 
\begin{eqnarray}
	\bm{\theta}_{i} = \mathbf{B}^\top \mathbf{x}_i+ \mathbf{u}_i,\quad \mathbf{u}_i \sim \mathcal{N}_q (\bm{0}, \bm{\Sigma}),~~ i = 1, \ldots, n, \label{blatent}
\end{eqnarray}
where $\mathbf{u}_i$ is a $q$-dimensional random effect vector with covariance $\bm{\Sigma}$ that models the correlations among the $q$ responses. 
%Let $\mathbf{B} = ( \mathbf{b}_1, \ldots, \mathbf{b}_p)^\top$.

To apply the Bayesian approach, we endow the rows $\mathbf{b}_j$ of $\mathbf{B}$ with the following independent GL priors:
\begin{equation} \label{Mt-MBSP}
	\mathbf{b}_j \mid \xi_j \sim \mathcal{N}_q(\bm{0}, \tau \xi_j \mathbf{I}_q), \quad \xi_j \sim p(\xi_j), \quad j = 1, \ldots, p,
\end{equation} 
where $\tau>0$ is a \textit{global} shrinkage parameter which shrinks all elements in $\mathbf{b}_j$ to zero. Meanwhile, the $\xi_j$ is a \textit{local} shrinkage parameter that controls the  individual shrinkage in $\mathbf{b}_j$ through a combination of heavy mass around zero and heavy tails. We assume that $p(\xi_j)$ is a polynomial-tailed density,
\begin{equation}
	p(\xi_j) = K\xi^{-v-1}_j L(\xi_j), 
	\label{global_local_prior}
\end{equation}
where $K > 0$ is the constant of proportionality, $v$ is a positive real number, and $L$ is a slowly varying function valued over $(0, \infty)$. If $\lVert \mathbf{b}_j \rVert$ is zero or close to zero, then the combination of global shrinkage and heavy mass near zero in $p(\xi_j)$ ensures that the entries are in $\mathbf{b}_j$ are heavily shrunk to zero. On the other hand, if $\lVert \mathbf{b}_j \rVert$ is large, then the heavy tails of $p(\xi_j)$ counteract the global shrinkage from $\tau$ and prevent overshrinkage of $\mathbf{b}_j$. This \textit{adaptive} shrinkage allows Mt-MBSP to identify the nonzero signals.

{\picotwo The prior \eqref{Mt-MBSP} shrinks many rows in $\mathbf{B}$ towards zero, and thus, Mt-MBSP produces a (nearly) row-sparse estimate of $\mathbf{B}$. Row sparsity does not preclude the nonzero rows of $\mathbf{B}$ from also containing null entries, as long as these rows contain at least one nonzero entry. Row sparsity is a very common assumption in the literature on variable selection for multivariate regression \cite{ BaiGhosh2018,  ChenHuang2012, ChunKeles2010, GohDeyChen2017,KonerWilliams2023,LiNanZhu2015, LiquetMengersonPettittSutton2017, ZHANG2022104835}. Researchers possibly make this assumption for interpretability reasons, because if the $j$th row of the estimate for $\mathbf{B}$ contains all zeros or all negligible entries, then we can conclude that the $j$th covariate is not significantly associated with \emph{any} of the $q$ response variables. In Section \ref{Discussion}, we discuss how to extend Mt-MBSP to more general sparsity structures.}

\begin{table}[t!] 
	\caption{Polynomial-tailed priors, their respective hyperpriors $p(\xi_j)$ up to normalizing constant $C$, and the slowly-varying component $L(\xi_j)$.}\label{table:priors}
	\centering
	\resizebox{\textwidth}{!}{
		\begin{tabular}{lcc}
			Prior & $p(\xi_j)/C$ & $L(\xi_j)$  \\
			\toprule
			Student's $t$ & $\xi_j^{-a-1}\exp(-{a}/{\xi_j})$ & $\exp \left( -a/\xi_j \right)$ \\
			TPBN & $\xi_j^{u-1}(1+\xi_j)^{-a-u}$ & $\left\{ \xi_j/(1+\xi_j) \right\}^{a+u}$ \\
			Horseshoe & $\xi_j^{-1/2}(1+\xi_i)^{-1}$ & $\xi_j^{a+1/2} / (1+\xi_j)$  \\
			NEG & $ \left(1+\xi_j\right)^{-1-a} $ & $\left\{ \xi_j/(1+\xi_j) \right\}^{a+1}$ \\
			GDP & $\int_0^\infty ({\lambda^2}/{2}) \exp (- {\lambda^2\xi_j}/{2} )\lambda^{2a-1}\exp(-\eta\lambda)d\lambda$ & $\int_{0}^{\infty} t^{a} \exp(-t - \eta (2t/\xi_j)^{1/2}) dt$ \\
			Horseshoe+ & $ \xi_j^{-1/2}(\xi_j-1)^{-1}\log(\xi_j)$ & $\xi_j^{a+1/2}(\xi_j - 1)^{-1} \log (\xi_j)$  \\
			% HIB & $\xi_j^{u-1}(1+\xi_j)^{-(a+u)}\exp\left( -\frac{s}{1+\xi_j} \right)\times \left( \phi^2+\frac{1-\phi^2}{1+\xi_j}\right)^{-1}$ &
			% $\left\{ \xi_j / (1+\xi_j) \right\}^{a+u}   \exp \left( - \frac{s}{1 + \xi_j} \right) \left( \phi^2 + \frac{1- \phi^2}{1 + \xi_j} \right)^{-1}$ \\
			\bottomrule
		\end{tabular} }
\end{table}

The polynomial-tailed prior density function \eqref{global_local_prior} encompasses many well-known shrinkage prior distributions, including the Student's $t$, three parameter beta normal (TPBN) \citep{ArmaganClydeDunson2011}, horseshoe \citep{CarvahoPolsonScott2010}, normal-exponential-gamma (NEG) \citep{GriffinBrown2013}, generalized double Pareto (GDP) \citep{ArmaganDunsonLee2013}, and horseshoe+ \citep{BhadraDattaPolsonWillard2017}. Table \ref{table:priors} lists the $p(\xi_j)$'s and $L(\xi_j)$'s in \eqref{global_local_prior} for these priors. The TPBN family includes the horseshoe ($u=0.5, a = 0.5$) and NEG ($u=1, a >0$) as special cases.

A similar prior \eqref{Mt-MBSP}-\eqref{global_local_prior} was used by \cite{BaiGhosh2018} for the regression coefficients in the Gaussian multivariate linear regression model $\mathbf{y} = \mathbf{B}^\top \mathbf{x} + \bm{\varepsilon}, \bm{\varepsilon} \sim \mathcal{N}_q(\bm{0}, \bm{\Sigma})$. Different from \cite{BaiGhosh2018}, the mixed-type response model introduced in this paper allows for binary, categorical, or discrete outcomes by employing latent vectors \eqref{jointzomega} to link the responses and the covariates. In contrast, the method of \cite{BaiGhosh2018} can only
be used if \textit{all} of the response variables are continuous.
%meanwhile the models have an analytical form as a joint likelihood for Gaussian multivariate regression. 
Moreover, \cite{BaiGhosh2018} make a very restrictive assumption that the covariance matrix $\bm{\Sigma}$ is known. In this article, we treat $\bm{\Sigma}$ as unknown with a prior.

To complete our prior specification, we endow the covariance matrix $\bm{\Sigma}$ for the random effect $\mathbf{u}_i \sim \mathcal{N}_q(\bm{0}, \bm{\Sigma})$ in \eqref{blatent} with an inverse Wishart prior,
\begin{equation} \label{IWpriorSigma}
	\bm{\Sigma} \sim {\cal IW}(d_1,d_2 \mathbf{I}_q), 
\end{equation}
where $d_1 > q-1$ is the degrees of freedom and $d_2 > 0$ is a scale parameter.

% The Gibbs sampler of logistic models, binomial models,
% multinomial model, and negative binomial regression models can be achieved via the PG data augmentation scheme with
%a hierarchical structure of the
%Gaussian and P\'{o}lya-Gamma distributions. 

\subsection{Posterior sampling and variable selection} \label{BVS}

Thanks to the data augmentation scheme with latent vectors $(\mathbf{z}_i, \bm{\omega}_i)$, it is very convenient to derive a simple Gibbs sampler to implement our method. In particular, if the TPBN prior \citep{ArmaganClydeDunson2011} is used for \eqref{Mt-MBSP}-\eqref{global_local_prior}, then all conditional distributions under \eqref{Mt-MBSP} have a closed form. The TPBN prior can be reparameterized as
\begin{equation} \label{TPBN}
	\mathbf{b}_j \mid \nu_j \sim {\cal{N}}_q (0, \nu_j I_q),~~ \nu_j \mid \eta_j \sim {\cal G}(u, \eta_j),~~ \eta_j\sim {\cal G}(a, \tau),~~ j = 1, \ldots, p,
\end{equation}
and includes the popular horseshoe prior \citep{CarvahoPolsonScott2010} when $u=a=0.5$. For concreteness, we discuss the Gibbs sampler with TPBN priors \eqref{TPBN} for the local scale parameters. However, it is straightforward to derive the Gibbs sampler for the other polynomial-tailed priors listed in Table \ref{table:priors} by similarly reparameterizing them as in \eqref{TPBN}. Our theoretical results in Sections \ref{theory} and \ref{computation} are also derived under the general class of polynomial-tailed densities \eqref{global_local_prior}.

We first consider the Gibbs sampling algorithm for our model with all $p$ predictors. We refer to this as the \textit{one-step} Gibbs sampler. In Section \ref{computation}, we will introduce and theoretically analyze a \textit{two-step} algorithm suitable for large $p$. The Gibbs sampler for the one-step Mt-MBSP model with the TPBN hyperprior \eqref{TPBN} is given in Appendix \ref{S1:Gibbs}. All of the conditional distributions in our Gibbs sampler are available in closed form, obviating the need to use approximations for the residual errors \citep{WagnerTuchler2010} or transformations of the responses \citep{BradleyBA2022}. 

{\picotwo We provide some intuition about the Gibbs sampler in Appendix \ref{S1:Gibbs}. Based on \eqref{p4}-\eqref{p3}, there exist corresponding augmented data vectors $\mathbf{z}_i = (z_{i1}, \ldots, z_{iq})^\top$ and $\bm{\omega}_i = (\omega_{i1}, \ldots, \omega_{iq})^\top$ for each $\mathbf{y}_i=(y_{i1}, \ldots, y_{iq})^\top$. From \eqref{p1}, we have $p(y_{ik}, \omega_{ik}, \theta_{ik})=p(y_{ik},\omega_{ik}, z_{ik}, \theta_{ik})$ since $z_{ik}$ is determined by $y_{ik}$ and $\omega_{ik}$.  Let $\mathbf{x}_i = (x_{i1}, \ldots, x_{ip})^\top$ be the vector of $p$ covariates for the $i$th observation. Meanwhile, let $\mathbf{u}_i \sim \mathcal{N}_q(\bm{0},~\bm{\Sigma})$ and $\bm{\Omega}_i=\textrm{diag}(\omega_{i1},\ldots,\omega_{iq})$. Since $\mathbf{u}_i$ in \eqref{blatent} captures the correlations among the $q$ responses in $\mathbf{y}_i$, we obtain 
 from \eqref{p3}-\eqref{blatent} that
 the conditional density 
$(\mathbf{z}_i,~\bm{\omega}_i)\mid \mathbf{x}_i,~\mathbf{u}_i$ is
\begin{equation}  \label{jointzomega}
p( \mathbf{z}_i,~\bm{\omega}_i \mid \mathbf{x}_i, \mathbf{u}_i) = 
p_0(\mathbf z_i,~\bm \omega_i)
  \exp\left\{-\frac12
  (\mathbf z_i-\mathbf B^\top \mathbf x_i-\mathbf u_i)^\top\mathbf \Omega_i
  (\mathbf z_i-\mathbf B^\top \mathbf x_i-\mathbf u_i)
  \right\}, 
\end{equation}

\noindent where  $p_0(\mathbf z_i,~\bm \omega_i)$ is 
a baseline function not depending on $\mathbf{B}$ or $\mathbf{u}_i$.
While the conditional density $p(\mathbf{z}_i,~\bm{\omega}_i \mid \mathbf{x}_i, \mathbf{u}_i)$ may not be Gaussian, we have a Gaussian kernel for the term that depends on $\mathbf{B}$ and $\mathbf{u}_i$. Therefore, the conditional distributions for $\mathbf{u}_i, i = 1, \ldots, n$, are Gaussian. Since we use a conditionally conjugate Gaussian scale mixture prior \eqref{Mt-MBSP}-\eqref{global_local_prior} for $\mathbf{B}$ and an inverse-Wishart prior \eqref{IWpriorSigma} for $\bm{\Sigma}$, the conditional distributions for $\mathbf{B}$ and $\bm{\Sigma}$ are both available in closed form. By \eqref{p4}, the conditional distributions of the latent variables $\omega_{ik}, k = 1, \ldots, q$, also have closed forms. Note that in the Gibbs sampler in Appendix \ref{S1:Gibbs}, the augmented vectors $\mathbf{z}_i, i=1, \ldots, n$, are not sampled but are updated as in \eqref{p4} whenever the vectors $\bm{\omega}_i, i=1, \ldots, n$, are updated.}

{\pico A desirable property of MCMC algorithms is geometric ergodicity. If a Markov chain is geometrically ergodic, then a Markov chain central limit theorem exists, and this allows practitioners to compute asymptotically valid standard errors for Markov chain-based estimates of posterior quantities \cite{RobertsRosenthal1998}. In the literature, there has been significant progress made on establishing geometric ergodicity for both P\'{o}lya-gamma-based Gibbs samplers \cite{ChoiHobert2013, WangRoy2018} and Gibbs sampling algorithms for regression models under Gaussian scale-mixture shrinkage priors \cite{BhattacharyaKharePal2022,PalKhare2014, PalKhareHobert2017}. Since our MCMC algorithm in Appendix \ref{S1:Gibbs} employs P\'{o}lya-gamma data augmentation and Gaussian scale-mixtures, we expect that our algorithm is also geometrically ergodic under suitable conditions. However, a rigorous theoretical analysis on the convergence of the MCMC chain is needed to confirm this and should be explored in future work. In Appendix \ref{S5:Diagnostics}, we report MCMC diagnostics for several of our simulated examples, demonstrating adequate convergence our MCMC algorithm and quality of the MCMC samples.}

After estimating the $(j,k)$th entries $b_{jk}$'s in $\mathbf{B}$ using the Gibbs sampler, we can use the 0.025 and 0.975 quantiles, $q_{0.025}(b_{jk})$ and
$q_{0.975}(b_{jk})$, 
for each $j = 1, \ldots, p, k = 1, \ldots, q$, to select the set of variables $\mathcal{A}_0 \subset \{1, \ldots, p \}$ that are significantly associated with the $q$ responses as
\begin{equation}
	{\cal A}_0= 
	\left\{
	j \mid 
	q_{0.025}(b_{jk}) > 0 \textrm{ or }  q_{0.975}(b_{jk}) < 0 \textrm{ for some } k =  1,\dots,q \right\}. \label{a0}
\end{equation}
That is, if at least one of the entries in the $j$th row of $\mathbf{B}$ has a 95\% credible interval that does \textit{not} contain zero, then we select the $j$th covariate {\picotwo and conclude that it is significantly associated with at least one of the $q$ responses. In particular, if the credible interval for $b_{jk}$ does not contain zero, then we conclude that the $j$th covariate is significantly associated with the $k$th response.}

% \subsection{Bayesian variable selection} \label{BVS}

\section{Asymptotic theory for one-step Mt-MBSP} \label{theory}

\setcounter{equation}{0}

\subsection{Posterior contraction rate of one-step Mt-MBSP} \label{onestepcontraction}

Theoretical analysis of Bayesian \textit{mixed-type} multivariate regression models has been missing in the literature. We address this gap by considering the high-dimensional regime where the number of covariates $p$ diverges as $n \rightarrow \infty$ at rates which depend on $n$. Accordingly, we use $p_n$ to denote the dimensions of our parameter of interest $\mathbf{B}_n = \mathbf{B}_{p_n \times q}$. We first characterize the asymptotic behavior of the posterior for $\mathbf{B}_n$ under the \textit{one-step} Mt-MBSP method (i.e. Mt-MBSP fitted to all $p_n$ variables without variable screening). In Section \ref{computation}, we introduce a novel two-step algorithm that can overcome some limitations of the one-step estimator.

It bears mentioning that establishing new theoretical results for mixed-type multivariate regression does \textit{not} follow straightforwardly from existing results in univariate regression or Gaussian multivariate regression (e.g. \citep{ BaiGhosh2018,song2023nearly}). {\picotwo Our mixed-type regression framework involves latent vectors in \eqref{blatent} }that are correlated through a shared random effect with an unknown covariance matrix $\bm{\Sigma}$, which we endow with a prior \eqref{IWpriorSigma}. Our proofs involve carefully bounding the eigenvalues of the induced marginal covariance matrices of these latent vectors. In contrast,  latent vectors are not required for univariate regression models or for multivariate Gaussian regression models. Furthermore, most of the existing literature only considers a subexponential growth of the predictors with sample size, i.e. $\log p_n = o(n)$, whereas we also allow for exponential growth $\log p_n = O(n^{\alpha}), \alpha \geq 1$.

Before studying the general regime of $\log p_n = O(n^{\alpha}), \alpha \geq 1$, we first consider the special case of $\log p_n = o(n)$. Throughout this section, we assume we have a mixed-type response model where \eqref{gaussianresponse}-\eqref{polson_model} and \eqref{blatent} hold with true parameters $\mathbf{B}_0$ and $\bm{\Sigma}_0$. That is, for $k = 1, \ldots, q$, $p_k(y_{ik} \mid \theta_{i0}) \propto \exp \left\{ -(y_{ik}- \theta_{ik0})^2/2 \right\}$ if $y_{ik}$ is continuous and $p_k(y_{ik}\mid \theta_{i0}) \propto \{\exp(\theta_{ik0})\}^{f^{(1)}_{ik}}/
\{1+\exp(\theta_{ik0})\}^{f^{(2)}_{ik}}$ if $y_{ik}$ is discrete. Meanwhile,
$\bm{\theta}_{i0} = \mathbf{B}_0^\top \mathbf{x}_i+ \mathbf{u}_{i}$, $\mathbf{u}_i \sim \mathcal{N}_q(\bm{0},\bm{\Sigma}_0)$, for each $i=1,\dots,n$. % Based on \eqref{p4}-\eqref{p3}, there exist two auxiliary vectors $\mathbf{z}_i$ and $\bm{\omega}_i$ for each observation $(\mathbf{y}_i,~\mathbf{x}_i)$, with which we define the latent matrices $\mathbf{Z}_n = (\mathbf{z}_1, \ldots, \mathbf{z}_n)^\top$ and $\mathbf{W}_n = (\bm{\omega}_1, \ldots, \bm{\omega}_n)^\top$. 
We make the following assumptions.
\begin{enumerate}[label=(A\arabic*)]
	\item $n \ll p_n$ and $\log p_n=o(n)$. 
	\item Let $S_0 \subset \{1, \dots, p_n \}$ denote the set of indices of the rows in $\mathbf{B}_0$ with at least one nonzero entry. Then $|S_0|=s_0$ satisfies $1\leq s_0$ and ${ s_0=o(n/ \log p_n)}$.
 \item {\pico For $\mathbf{X}_n = (\mathbf{x}_1, \ldots, \mathbf{x}_n)^\top$, and for an arbitrary set $S \subset \{1, \ldots, p_n \}$, let $\mathbf{X}^S$ denote the submatrix of $\mathbf{X}_n$ containing the columns with indices in $S$. For any $S$ where $|S| = o(n / \log p_n)$, there exists a sequence $\varepsilon_n$ satisfying $\varepsilon_n= n^{-k_0}$ such that 
 $n^{k_0+d}=o(n/\log p_n)$ for some $k_0>0,~d>0$  and   
\[
\varepsilon_n< 
\lambda_{\min}\left\{\frac1n(\mathbf{X}^{S})^\top\mathbf{X}^{S}\right\} <\lambda_{\max}\left\{\frac1n(\mathbf{X}^{S})^\top\mathbf{X}^{S}\right\}<\varepsilon^{-1}_n
\mbox{~~as~~}n\rightarrow\infty.
\]
}
	\item There exists a constant $0 < k_1 < \infty$ such that 
	\[ k_1^{-1} \leq \lambda_{\min}(\bm{\Sigma}_0)<\lambda_{\max}(\bm{\Sigma}_0)< k_1. 
	\]
\end{enumerate}
Assumption (A1) allows the number of covariates $p_n$ to grow at a subexponential rate with $n$. Assumption (A2) restricts the number of true nonzero rows in $\mathbf{B}_0$.
Assumption (A3) bounds the eigenvalues of the Gram matrices $n^{-1}\sum^n_{i=1}\mathbf{x}^S_i (\mathbf{x}_i^{S})^\top$ for all sets $S$ of size $o(n / \log p_n)$. This assumption is needed to ensure identifiability of the model parameters when $p \gg n$ \cite{BaiGhosh2018}. {\pico Assumption (A3) is analogous to Assumption 4 of \cite{ShinBhattacharyaJohnson2018}, which is weaker than 
Assumptions (B3)-(B4) of \cite{BaiGhosh2018}, Assumption $\text{A}_1(3)$ of \cite{song2023nearly}, and 
Assumption 1 of \cite{CaoKhareGhosh2020}. In particular, the minimum eigenvalue in Assumption (A3) is lower bounded by a sequence $\varepsilon_n$ that \emph{decreases} to zero as $n$ increases, while the upper bound on the maximum eigenvalue $\varepsilon_n^{-1}$ also \emph{diverges} as $n$ increases. Similar assumptions on the eigenstructure of the design matrix are made in \cite{LiDuttaRoy2023, narisetty2014bayesian, YangWainwrightJordan2016}.} Finally, Assumption (A4) bounds the eigenvalues of the unknown covariance matrix $\boldsymbol{\Sigma}_0$. Since the response dimension $q$ is fixed, this assumption is reasonable.

 {\picotwo \begin{remark} Assumption (A2) assumes that $\mathbf{B}_0$ is row-sparse, or that most of the rows in $\mathbf{B}_0$ are zero vectors. This assumption is commonly made in the literature {\rm\cite{BaiGhosh2018,  ChenHuang2012, ChunKeles2010, GohDeyChen2017,KonerWilliams2023,LiNanZhu2015,LiquetMengersonPettittSutton2017,ZHANG2022104835}}. In what follows, all of our theoretical results rely on this assumption. In Section \ref{Discussion}, we discuss how to extend our theoretical results to more general sparsity structures in $\mathbf{B}_0$. 
 \end{remark}}  

To derive the main theoretical results for the one-step Mt-MBSP model, we also need the following assumptions.
\begin{enumerate}[label=(B\arabic*)]
	\item
 {\picotwo Let $k_2 =k_0+d$ and $k_2\in(0,1)$, where 
 $k_0$ and $d$ are defined in Assumption (A3)}.  Assume 
$\tau =  C n^{k_2 - 1 - \rho}\exp(n^{k_2-1})$ is a decreasing function of $n$ for some constants $C > 0$ and $\rho>k_2$.
	\item For the slowly varying 
	function $L(\cdot)$ in the hyperprior $p(\xi_j)$ in \eqref{global_local_prior}, $\lim_{t\rightarrow \infty}L(t)\in(0,\infty)$. That is, there exists $c_0>0$ such that $L(t)\geq c_0$ for all $t\geq t_0$ for some $t_0$ which depends on both $L$ and $c_0$.
	\item The maximum entry in $\mathbf{B}_0$ satisfies $\sup_n\sup_{j,~k}|(\mathbf{B}_{0})_{jk}|^2 =O(\log p_n)$.
 \item {\picotwo Let $\kappa_{i\ell}=f_{i\ell}^{(1)} -0.5 f^{(2)}_{i\ell}$, $i=1,\dots,n$, where $\kappa_{i\ell}$ as defined as in \eqref{wyz}, and $\ell \in \{1, \ldots, q\}$ is the index of a discrete random variable of the form \eqref{polson_model}. For any $t>0$,  $\max_{1\leq i \leq n}\kappa_{i\ell}<n^{t}$~ almost surely as $n \rightarrow \infty$.} 
\end{enumerate}
% Assumptions (B1)-(B3) are regular assumptions for Bayesian variables selection approach. 
Assumption (B1) specifies an appropriate rate of decay for the global shrinkage parameter $\tau$ in \eqref{Mt-MBSP} to ensure that most coefficients are shrunk to zero. Assumption (B2) is a very mild condition on $L(\cdot)$ in the hyperprior \eqref{global_local_prior} to ensure that the tails do not decay too quickly. All of the shrinkage priors listed in Table \ref{table:priors} satisfy condition (B2). Assumption (B3) bounds the magnitude of the entries in $\mathbf{B}_0$ at a rate that diverges polynomially in $n$.
{\picotwo  Finally, Assumption (B4) is a mild condition which gives a tail bound for any discrete response variable $y_{\ell}$ in our model. 
Note that $\kappa_{\ell}^2=1/4$ (a constant) when $y_{\ell}$ is Bernoulli, $\kappa_{\ell}^2=(y_{\ell}-M/2)^2$ when $y_{\ell}$ is binomial with $M$ trials, and $\kappa_{\ell}^2=(y_{\ell}-r)^2/4$ when $y_{\ell}$ is negative binomial with dispersion parameter $r$. Obviously, (B4) holds for Bernoulli, binomial, and multinomial distributions. In Proposition \ref{prop:NB} of the Appendix, we prove that Assumption (B4) also holds for the NB distribution.
} 
\begin{theorem}[posterior contraction rate for one-step Mt-MBSP estimator] \label{one-step_contraction_rate}
	Assume that \eqref{gaussianresponse}-\eqref{polson_model} hold where the natural parameters $\bm{\theta}_{i0}$'s satisfy \eqref{blatent} with true parameters $(\mathbf{B}_0, \bm{\Sigma}_0)$. Suppose that Conditions (A1)-(A4) and (B1)-{\picotwo (B4)} hold, and we endow $\mathbf{B}_n$ with the prior \eqref{Mt-MBSP}-\eqref{global_local_prior} and $\bm{\Sigma}$ with the inverse-Wishart prior \eqref{IWpriorSigma}. Then, for any arbitrary $\varepsilon>0$, 
	\begin{equation*}
		\sup_{\mathbf{B}_0}  E_{\mathbf{B}_0} P \left( 
		\|\mathbf{B}_n-\mathbf{B}_0\|_F> \varepsilon \left( \frac{s_0 \log p_n}{n} \right)^{1/2} ~\bigg|~{\picotwo \mathbf{X}_n, \mathbf{Y}_n}
		\right)\longrightarrow  0\mbox{~as~} n\rightarrow \infty,
	\end{equation*}
\end{theorem}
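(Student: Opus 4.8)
The plan is to exploit the data-augmented representation \eqref{pbeta3}: conditional on the latent matrices $\mathbf{Z}_n$ and $\mathbf{W}_n$, the weights $\boldsymbol{\Omega}_i = \mathrm{diag}(\omega_{i1}, \ldots, \omega_{iq})$ are fixed, and marginalizing over the random effect $\mathbf{u}_i$ yields a Gaussian multivariate regression,
\[
\mathbf{z}_i \mid \mathbf{B}, \boldsymbol{\Sigma} \sim \mathcal{N}_q\!\left( \mathbf{B}^\top \mathbf{x}_i, \; \boldsymbol{\Sigma} + \boldsymbol{\Omega}_i^{-1} \right), \qquad i = 1, \ldots, n.
\]
This reduces the problem to posterior contraction for the mean matrix $\mathbf{B}$ in a Gaussian model with \emph{sample-specific} error covariances $\boldsymbol{\Sigma} + \boldsymbol{\Omega}_i^{-1}$ and an unknown nuisance $\boldsymbol{\Sigma}$. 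I would then run the standard Ghosal--Ghosh--van der Vaart program at rate $\varepsilon_n = (s_0 \log p_n / n)^{1/2}$, writing the posterior probability of $\{\|\mathbf{B}_n - \mathbf{B}_0\|_F > M \varepsilon_n\}$ as a ratio of integrated likelihood ratios and bounding the numerator and denominator separately, each with $E_{\mathbf{B}_0}$-probability tending to one.

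For the denominator (evidence lower bound), I would show the prior places mass at least $e^{-c n \varepsilon_n^2}$ on a Kullback--Leibler neighborhood of $(\mathbf{B}_0, \boldsymbol{\Sigma}_0)$. Splitting the rows of $\mathbf{B}$ into the $s_0$ signal rows $S_0$ and the $p_n - s_0$ null rows, the polynomial tails of $p(\xi_j)$ together with Assumption (B3) guarantee that the signal rows can match $\mathbf{B}_0$ without excessive prior penalty, while the global scale $\tau = c\, p_n^{-1} n^{-\rho - 1}$ of Assumption (B1) forces each null row into an $O(\varepsilon_n/\sqrt{p_n})$-ball at a cost that, summed over the $p_n - s_0$ coordinates, remains $e^{-O(n \varepsilon_n^2)}$; Assumption (B2) ensures the slowly varying factor $L$ does not erode this mass, and the inverse-Wishart prior \eqref{IWpriorSigma} contributes a constant-order factor on a neighborhood of $\boldsymbol{\Sigma}_0$ by Assumption (A4). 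The governing computation is the per-sample KL divergence between $\mathcal{N}_q(\mathbf{B}^\top \mathbf{x}_i, \boldsymbol{\Sigma} + \boldsymbol{\Omega}_i^{-1})$ and the truth, which decomposes into a mean term controlled by $\frac1n \sum_i \mathbf{x}_i^\top (\mathbf{B} - \mathbf{B}_0)(\mathbf{B} - \mathbf{B}_0)^\top \mathbf{x}_i$ and a covariance term in $\|\boldsymbol{\Sigma} - \boldsymbol{\Sigma}_0\|_F$, both converted into $\|\mathbf{B}-\mathbf{B}_0\|_F$ and $\|\boldsymbol{\Sigma}-\boldsymbol{\Sigma}_0\|_F$ using the eigenvalue bounds discussed below.

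For the numerator, I would introduce a sieve $\mathcal{B}_n$ of matrices confined to coordinate supports of size below $n$ with bounded Frobenius norm (and $\boldsymbol{\Sigma}$ with eigenvalues in a fixed compact interval), show $\Pi(\mathcal{B}_n^c) \leq e^{-c' n \varepsilon_n^2}$ using the polynomial tail of the global-local prior and the inverse-Wishart tail, and construct likelihood-ratio tests separating $\{\|\mathbf{B} - \mathbf{B}_0\|_F > M \varepsilon_n\} \cap \mathcal{B}_n$ from $\mathbf{B}_0$ with both error probabilities bounded by $e^{-C n \varepsilon_n^2}$. Because the conditional model is Gaussian, the test exponents are driven by the separation $\frac1n \sum_i \|(\mathbf{B} - \mathbf{B}_0)^\top \mathbf{x}_i\|^2 \gtrsim \underline{\tau}\, \|\mathbf{B} - \mathbf{B}_0\|_F^2$, valid by the restricted minimum-eigenvalue bound of Assumption (A3) on the sparse supports permitted by the sieve, divided by the largest error variance $\max_i \lambda_{\max}(\boldsymbol{\Sigma} + \boldsymbol{\Omega}_i^{-1})$, while the Frobenius entropy of $\mathcal{B}_n$ is $O(s_0 \log p_n)$, matching $n \varepsilon_n^2$.

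The main obstacle---the step that genuinely goes beyond univariate and Gaussian-multivariate theory---is the uniform control of the eigenvalues of the induced marginal covariances $\boldsymbol{\Sigma} + \boldsymbol{\Omega}_i^{-1}$. Both the KL bound and the test exponents require $\lambda_{\min}$ bounded below and $\lambda_{\max}$ bounded above, uniformly in $i = 1, \ldots, n$ and uniformly over $\boldsymbol{\Sigma}$ in the sieve. Since $\boldsymbol{\Sigma}$ is unknown with only the prior \eqref{IWpriorSigma} and the $\boldsymbol{\Omega}_i$ are random P\'olya--gamma weights, I would establish, with $E_{\mathbf{B}_0}$-probability tending to one, that the realized $\omega_{ik}$ stay in a compact interval bounded away from $0$ and $\infty$: the continuous coordinates contribute exactly $\omega = 1$, and the discrete coordinates are $\mathcal{PG}(f^{(2)}, 0)$ with concentration controlled through the bounded linear predictors implied by Assumptions (A3) and (B3). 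Combined with (A4), this yields $k_1^{-1} \leq \lambda_{\min}(\boldsymbol{\Sigma} + \boldsymbol{\Omega}_i^{-1})$ and $\lambda_{\max}(\boldsymbol{\Sigma} + \boldsymbol{\Omega}_i^{-1}) \leq k_2$ for constants independent of $n$. This uniform spectral control is what lets the otherwise standard Gaussian-regression arguments close, and it is precisely the piece absent from models without latent vectors.
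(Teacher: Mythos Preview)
Your overall architecture---condition on $(\mathbf{Z}_n,\mathbf{W}_n)$ to obtain the Gaussian latent model $\mathbf{z}_i\mid\mathbf{B},\bm{\Sigma}\sim\mathcal{N}_q(\mathbf{B}^\top\mathbf{x}_i,\bm{\Sigma}+\bm{\Omega}_i^{-1})$, then run a numerator/denominator argument with a prior-mass lower bound and exponentially-consistent tests---matches the paper. The decomposition of the prior-mass term into signal rows (handled via the polynomial tail of $p(\xi_j)$ and Assumption~(B3)) and null rows (handled via the global scale $\tau$ of Assumption~(B1)) is also the same split the paper uses.

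The genuine gap is your spectral control of $\bm{\Sigma}+\bm{\Omega}_i^{-1}$. You claim that with $E_{\mathbf{B}_0}$-probability tending to one the realized P\'olya--gamma weights $\omega_{ik}$ lie in a compact interval bounded away from $0$ and $\infty$, uniformly in $i=1,\ldots,n$. This fails: a $\mathcal{PG}(b,c)$ variable has full support on $(0,\infty)$ regardless of $(b,c)$, so with $n$ independent draws $\min_i\omega_{ik}\to 0$ and $\max_i\omega_{ik}\to\infty$ almost surely. Consequently $\lambda_{\max}(\bm{\Sigma}+\bm{\Omega}_i^{-1})$ is \emph{not} bounded uniformly in $i$, and the version of the test exponent and KL bound you wrote---which divides by $\max_i\lambda_{\max}(\bm{\Sigma}+\bm{\Omega}_i^{-1})$---degenerates.

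The paper avoids this by never needing uniform-in-$i$ control. Both the test statistic and the log-likelihood ratio involve only \emph{averaged} quantities of the form
\[
\frac{1}{n}\sum_{i=1}^n(\bm{\Sigma}+\bm{\Omega}_i^{-1})^{-1}\otimes\bigl(\mathbf{x}_i^S(\mathbf{x}_i^S)^\top\bigr),
\]
and the paper bounds the eigenvalues of this average (Proposition~\ref{PropositionA3}) via the SLLN together with the inequality $\{E(\bm{\Sigma})+E(\bm{\Omega}_1^{-1})\}^{-1}\leq E\{(\bm{\Sigma}+\bm{\Omega}_1^{-1})^{-1}\}\leq E(\bm{\Omega}_1)$ and explicit moment calculations for $\mathcal{PG}(b,0)$. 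The test is then built from the MLE $\widehat{\mathbf{B}}^S$ over supersets $S\supset S_0$, whose covariance is the inverse of this averaged matrix; the chi-square tail plus a union bound over $\binom{p_n}{|S|}$ models gives the $e^{-Cn\varepsilon_n^2}$ error rates. Your sieve-plus-likelihood-ratio-test route is a reasonable alternative, but it too must be rewritten in terms of averaged precisions rather than uniform per-sample bounds to go through.
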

where $E_{\mathbf{B}_0}$ denotes the expectation operator under the probability law induced by our mixed-type regression model \eqref{gaussianresponse}-\eqref{polson_model} with true parameter $\mathbf{B}_0$ in \eqref{blatent}.

{\pico \begin{remark} In the literature on high-dimensional Bayesian regression, several authors have relaxed the assumption of exact sparsity. For example, in {\rm\cite{LiDuttaRoy2023, narisetty2014bayesian, YangWainwrightJordan2016}}, the regression coefficients in $S_0^c = \{ 1, \ldots, p_n \} \setminus S_0$ do \emph{not} need to be exactly zero but can instead have bounded norm. An inspection of the proof of Theorem \ref{one-step_contraction_rate} reveals that we can also obtain the same posterior contraction rate under ``approximate'' sparsity of $\mathbf{B}_0$. The proof of Theorem \ref{one-step_contraction_rate} entails decomposing $\lVert \mathbf{B}_n - \mathbf{B}_0 \rVert_F^2 = \sum_{j \in S_0} \lVert \mathbf{b}_j^{n} - \mathbf{b}_j^{0} \rVert^2 + \sum_{j \in S_0^c} \lVert \mathbf{b}_j^{n} - \mathbf{b}_j^{0} \rVert^2$ and separately bounding the error for each term (here, $\mathbf{b}_j^{n}$ and $\mathbf{b}_j^{0}$ denote the $j$th row of $\mathbf{B}_n$ and $\mathbf{B}_0$ respectively). Therefore, provided that $\sum_{j \in S_0^c} \lVert \mathbf{b}^{0}_{j} \rVert^2$ is bounded above appropriately, the total approximation error can be well-controlled and we can obtain the same result in Theorem \ref{one-step_contraction_rate}. 

However, the aim of our work is to highlight the theoretical and practical advantage of a two-stage method (Section \ref{computation}) when $p_n = O(\exp(n \alpha)), \alpha \geq 1$. Thus, to simplify the presentation, we keep the assumption of exact sparsity for $\mathbf{b}_j^{0}, j \in S_0^c$, i.e. $\sum_{j \in S_0^c} \lVert \mathbf{b}^{0}_{j} \rVert^2 = 0$. It should also be noted that {\rm\cite{LiDuttaRoy2023, narisetty2014bayesian, YangWainwrightJordan2016}} establish \emph{variable selection} consistency for the Gaussian linear regression model, whereas Theorem \ref{one-step_contraction_rate} concerns \emph{estimation} consistency for a mixed-type multivariate regression model. In general, variable selection consistency does not imply estimation consistency or vice versa. For example, {\rm\cite{ZouJASA2006}}showed that when the tuning parameter $\lambda_n$ in the LASSO estimator {\rm\cite{Tibshirani1996}} satisfies $\lambda_n = O(n^{1/2})$, the LASSO achieves estimation consistency but has inconsistent variable selection. 
\end{remark}}

\subsection{Posterior inconsistency of one-step Mt-MBSP under exponential growth}

Theorem \ref{one-step_contraction_rate} implies that $\log p_n = o(n)$ is sufficient for posterior contraction, i.e. as $n \rightarrow \infty$, the one-step Mt-MBSP posterior concentrates all of its mass in a ball of shrinking radius $(s_0 \log p_n / n)^{1/2} \rightarrow 0$. Subexponential growth of the predictors relative to sample size is a common assumption for posterior consistency in the Bayesian variable selection literature \citep{BaiGhosh2018,chakraborty2020bayesian,narisetty2014bayesian,ning2020bayesian, ZHANG2022104835}. However, it is unclear if the condition that $\log p_n = o(n)$ is necessary or if it can be relaxed. The next theorem establishes that this condition is in fact necessary. In other words, if we relax $\log p_n = o(n)$ to $\log p_n = O(n)$, then the posterior may be \textit{inconsistent}. 

\begin{theorem} [posterior inconsistency under exponential growth] \label{thm_inconsistent} Assume the same set-up as Theorem \ref{one-step_contraction_rate}, where we endow $\mathbf{B}_n$ with the prior \eqref{Mt-MBSP}-\eqref{global_local_prior} and $\bm{\Sigma}$ with the prior \eqref{IWpriorSigma}. Assume that $\log p_n=  C n$ where $C > 0$, $s_0<\infty$, and conditions (A2)-(A4) and (B1)-{\picotwo(B4)} hold. Then there exists a constant $\delta\in(0,1)$ such that for $\varepsilon > 0$,
	\begin{equation*}
		{\picotwo \inf_{\mathbf{B}_0}}~ E_{\mathbf{B}_0} P \left( 
		\| \mathbf{B}_n-\mathbf{B}_{0} \|_F> \varepsilon ~\bigg|~{\picotwo \mathbf{X}_n, \mathbf{Y}_n}
		\right)> \delta\ \mbox{~~as~~} n\rightarrow \infty.
	\end{equation*}
\end{theorem}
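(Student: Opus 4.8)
The plan is to prove inconsistency by exhibiting, for a worst-case true parameter $\mathbf{B}_0$, an irreducible spurious fit that the posterior cannot shrink away once $\log p_n = Cn$. First I would pass to the latent Gaussian linear model implied by \eqref{pbeta3}: conditionally on $\mathbf{W}_n$ and after integrating out the random effects $\mathbf{u}_i$ and the inverse-Wishart $\boldsymbol{\Sigma}$, the rows $\mathbf{z}_i$ follow a multivariate regression $\mathbf{Z}_n = \mathbf{X}_n \mathbf{B} + \mathbf{E}_n$ whose marginal error covariance has eigenvalues bounded above and below uniformly in $n$, using (A4) together with the boundedness of the P\'olya--gamma weights and the scale of \eqref{IWpriorSigma}. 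Conditions (A3)--(A4) then let me treat this as a generalized least-squares problem with a well-conditioned design on every subset $S$ with $0 < |S| < n$, which is what the maximal-inequality step requires. Since the statement is a supremum over $\mathbf{B}_0$, I am free to choose $\mathbf{B}_0$ with a single nonzero row ($s_0 < \infty$) whose signal strength is calibrated to the spurious scale identified below.

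The core is a spurious-correlation step. Because $\mathbf{X}_n$ has $p_n = \exp(Cn)$ columns that are individually non-degenerate by (A3), a maximal inequality shows that, on an event $\mathcal{E}_n$ with $E_{\mathbf{B}_0}$-probability tending to one, there is a null index $j^\ast \notin S_0$ whose column explains a constant fraction of the latent noise variance; concretely the associated generalized least-squares coefficient obeys $\max_{j \notin S_0} \lVert \widehat{\mathbf{b}}_j \rVert \gtrsim \sqrt{\log p_n / n} = \sqrt{C}$. This is precisely where exponential growth matters: the maximum of $p_n$ sub-Gaussian projections scales like $\sqrt{2 \log p_n / n}$, which is $o(1)$ under $\log p_n = o(n)$ (so Theorem \ref{one-step_contraction_rate} applies) but is of constant order $\sqrt{2C}$ here. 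Calibrating the single true signal in $\mathbf{B}_0$ to the same order renders the true predictor and the best spurious predictor statistically indistinguishable in the latent model.

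It then remains to convert this into a lower bound on the posterior mass of the far set $\{\lVert \mathbf{B}_n - \mathbf{B}_0 \rVert_F > \varepsilon\}$. Writing the posterior ratio
\[
P\!\left( \lVert \mathbf{B}_n - \mathbf{B}_0 \rVert_F > \varepsilon \mid \mathbf{Z}_n, \mathbf{W}_n \right) = \frac{\int_{\{ \lVert \mathbf{B} - \mathbf{B}_0 \rVert_F > \varepsilon \}} \mathcal{L}(\mathbf{B}) \, d\Pi(\mathbf{B})}{\int \mathcal{L}(\mathbf{B}) \, d\Pi(\mathbf{B})},
\]
I would lower-bound the numerator by restricting to configurations in which the spurious coordinate $\mathbf{b}_{j^\ast}$ has norm of order $\sqrt{C}$; since $(\mathbf{B}_0)_{j^\ast \cdot} = \mathbf{0}$, these automatically lie at distance $\gtrsim \sqrt{C} > \varepsilon$ from $\mathbf{B}_0$. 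On $\mathcal{E}_n$ such configurations gain a log-likelihood reward of order $+c_1 n$ relative to $\mathbf{B}_0$, while the global shrinkage $\tau = c\, p_n^{-1} n^{-\rho-1}$ charges a prior cost of order $\log(1/\tau) \asymp \log p_n = Cn$; the polynomial tails of $p(\xi_j)$ guaranteed by (B2) let the local scale $\xi_{j^\ast}$ grow enough that this cost is paid only logarithmically in the coefficient size. Choosing the worst-case signal so that $c_1 > C$ keeps the posterior odds bounded below on $\mathcal{E}_n$, and since $P(\mathcal{E}_n) \to 1$ this yields $E_{\mathbf{B}_0}[\,\cdot\,] > \delta$ for a suitable $\delta \in (0,1)$.

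The hard part will be making the reward-versus-cost balance rigorous at the threshold $\log p_n = Cn$, where the likelihood gain and the prior penalty are of exactly the same order $n$: I must show the worst-case $\mathbf{B}_0$ forces $c_1 > C$ \emph{uniformly after} integrating out the unknown $\boldsymbol{\Sigma}$ and the P\'olya--gamma weights $\mathbf{W}_n$, rather than conditioning on fixed values. Controlling that marginalization --- ensuring the induced error covariance stays well-conditioned so the Gaussian likelihood bounds survive, and that the heavy-tailed integral over $\xi_{j^\ast}$ does not erode the reward --- is the main technical obstacle; the maximal inequality and the eigenvalue bounds from (A3)--(A4) are comparatively routine.
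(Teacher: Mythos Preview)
Your route is genuinely different from the paper's. The paper does not construct a spurious predictor or weigh likelihood reward against prior cost; instead it reuses the test $\Phi_n = \mathbf{1}(\mathbf{Z}_n \in \mathcal{C}_n)$ from the consistency argument and lower-bounds $P(\mathcal{B}_\varepsilon \mid \mathbf{Z}_n) \geq (1-\Phi_n)J_{\mathcal{B}_\varepsilon}/J_n$. For the numerator, Fubini gives $E_{\mathbf{B}_0}[(1-\Phi_n)J_{\mathcal{B}_\varepsilon}] = \int_{\mathcal{B}_\varepsilon} E_{\mathbf{B}_n}(1-\Phi_n)\,dP(\mathbf{B}_n)$, and one checks that under $\log p_n = Cn$ the union bound driving Lemma~\ref{pg_prop}(i) no longer forces $E_{\mathbf{B}_n}(\Phi_n)\to 1$: one simply chooses $m_n$ so that $\log(m_n-s_0) + m_n(1+\log p_n) - n\varepsilon^2/(32k_2^2)$ is bounded rather than $\to -\infty$, which keeps $E_{\mathbf{B}_n}(1-\Phi_n)$ bounded away from zero on a subset $\mathcal{B}_\varepsilon^\ast\subset\mathcal{B}_\varepsilon$ of positive prior mass. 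The denominator $J_n$ is handled directly by showing the integrand is $\exp(-O_p(p_n^2))$, dominated by the prior term $-\sum_j \lVert \bm{\Sigma}^{-1/2}\mathbf{b}_j\rVert^2/(2\xi_j\tau_n)$. This test-function approach avoids any comparison of constants: it exploits only that the chi-square tail and the combinatorial factor are now of the \emph{same} order $n$, so the exponential separation behind Theorem~\ref{one-step_contraction_rate} collapses.

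Your plan has a gap at precisely the step you flag as hardest. You write ``choosing the worst-case signal so that $c_1 > C$,'' but $c_1$---the normalized log-likelihood reward from the spurious coordinate $j^\ast$---is determined by the design columns and the noise, not by the signal you place in $\mathbf{B}_0$. Calibrating $\mathbf{B}_0$ may make the true predictor hard to distinguish from $j^\ast$, but it cannot inflate $c_1$. The prior cost for activating $j^\ast$ is of order $a\log(1/\tau)\asymp aCn$ with $a$ the tail index in \eqref{global_local_prior}, while the reward is of order $Cn$ times a constant fixed by $k_2,\bar\tau$ from (A3)--(A4); neither side is at your disposal through the supremum over $\mathbf{B}_0$, so the inequality $c_1>C$ cannot be arranged this way for arbitrary $C>0$. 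Two further issues: the claimed lower bound $\max_{j\notin S_0}\lVert\widehat{\mathbf{b}}_j\rVert\gtrsim\sqrt{C}$ is the \emph{reverse} of a maximal inequality and does not follow from (A3), which only gives entrywise boundedness and minimum eigenvalues of small submatrices---you would need an additional richness or random-design hypothesis on the columns of $\mathbf{X}_n$; and you never bound the denominator $\int \mathcal{L}(\mathbf{B})\,d\Pi(\mathbf{B})$, without which a numerator lower bound does not translate into a lower bound on the posterior ratio.
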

Theorem \ref{thm_inconsistent} shows that if we relax $\log p_n = o(n)$ to $\log p_n = O(n)$, then the one-step Mt-MBSP posterior may \textit{not} concentrate all of its mass inside a ball of fixed radius $\varepsilon > 0$ almost surely as $n \rightarrow \infty$. This leads to the following \textit{necessary} condition for posterior consistency.
\begin{corollary} \label{corollary_inconsistent}
	Assume the same set-up as Theorem \ref{one-step_contraction_rate}. Suppose that conditions (A2)-(A4) and (B1)-(B3) hold. Then the condition that $\log p_n = o(n)$ is necessary for posterior consistency under the one-step Mt-MBSP model.
\end{corollary}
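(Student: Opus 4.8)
The plan is to prove the corollary as the contrapositive of Theorem \ref{thm_inconsistent}, promoting its knife-edge hypothesis $\log p_n = Cn$ to a subsequence statement. Here posterior consistency means that for every fixed $\varepsilon > 0$,
\begin{equation*}
	\sup_{\mathbf{B}_0} E_{\mathbf{B}_0} P\left( \| \mathbf{B}_n - \mathbf{B}_0 \|_F > \varepsilon ~\big|~ \mathbf{Z}_n, \mathbf{W}_n \right) \longrightarrow 0 \quad \text{as } n \to \infty,
\end{equation*}
and I would argue by contradiction, assuming that $\log p_n = o(n)$ fails yet posterior consistency still holds.

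First I would convert the failure of $\log p_n = o(n)$ into a quantitative handle. The negation of $\log p_n = o(n)$ is exactly $\limsup_{n} (\log p_n)/n = c^{*} > 0$, where $c^{*}$ is finite or $+\infty$. By the definition of $\limsup$, I can fix a constant $C \in (0, c^{*})$ and extract a subsequence $\{n_j\}$ along which $\log p_{n_j} \geq C n_j$ for every $j$. Since (A2)-(A4) and (B1)-(B3) are assumed to hold for the whole sequence, they hold along $\{n_j\}$ as well, so the sampling model, prior, and fixed sparsity $s_0 < \infty$ match the setup of Theorem \ref{thm_inconsistent} exactly, with the single change that $\log p_{n_j}$ is now bounded below by, rather than equal to, $C n_j$.

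Second I would transfer the inconsistency conclusion of Theorem \ref{thm_inconsistent} to this relaxed regime. The lower bound $\delta \in (0,1)$ in that theorem is produced by exhibiting a worst-case $\mathbf{B}_0$ together with a fixed amount of posterior mass that escapes every $\varepsilon$-ball; the relevant inequalities all deteriorate as $p_n$ grows, so the same $\delta$ continues to work whenever $\log p_n \geq C n$. Consequently, along the subsequence,
\begin{equation*}
	\sup_{\mathbf{B}_0} E_{\mathbf{B}_0} P\left( \| \mathbf{B}_{n_j} - \mathbf{B}_0 \|_F > \varepsilon ~\big|~ \mathbf{Z}_{n_j}, \mathbf{W}_{n_j} \right) > \delta \quad \text{for all large } j,
\end{equation*}
which prevents the full sequence from converging to $0$ and contradicts the assumed consistency. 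Hence $\log p_n = o(n)$ is forced, establishing necessity.

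The main obstacle is this transfer step, since Theorem \ref{thm_inconsistent} is literally stated for the equality $\log p_n = Cn$. I would address it by checking that its proof uses the equality only through a lower bound on $\log p_n / n$ and is monotone in $p_n$; if the escaping-mass estimate is not manifestly monotone, the cleaner route is to re-run the argument of Theorem \ref{thm_inconsistent} verbatim along $\{n_j\}$, replacing $\log p_n = Cn$ by $\log p_{n_j} \geq C n_j$, after which every step proceeds in the favorable direction. The case $c^{*} = +\infty$ requires no separate treatment: any finite $C$ suffices, and $\log p_{n_j}/n_j \to \infty$ eventually dominates $C n_j$.
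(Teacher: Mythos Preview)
Your approach is essentially the paper's—deduce necessity from Theorem~\ref{thm_inconsistent}—but you are considerably more careful. The paper's proof is a single sentence: it cites Theorems~\ref{one-step_contraction_rate} and~\ref{thm_inconsistent} and stops, without addressing the point you correctly raise, namely that the negation of $\log p_n = o(n)$ is $\limsup_n (\log p_n)/n > 0$ rather than $\log p_n = Cn$, so a subsequence argument is genuinely needed to bridge the gap.

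One caution on your transfer step: the claim that ``the relevant inequalities all deteriorate as $p_n$ grows, so the same $\delta$ continues to work whenever $\log p_n \geq Cn$'' is not quite right. In the proof of Theorem~\ref{thm_inconsistent}, part~(i) constructs the test $\Phi_n$ through a set $\mathcal{M}$ indexed by $m_n$ and needs $\log(m_n-s_0) + m_n(1+\log p_n) - n\varepsilon^2/(32k_2^2) < 0$ with $m_n > s_0$; if $\log p_{n_j}/n_j \to \infty$ along your subsequence (the $c^\ast = +\infty$ case), this constraint eventually fails for every fixed $\varepsilon$, so the test construction does not go through verbatim. Your fallback of re-running the argument along the subsequence works cleanly when $c^\ast < \infty$: pass to a further subsequence on which $\log p_{n_j}/n_j \to c^\ast$ and apply Theorem~\ref{thm_inconsistent} with $C = c^\ast$. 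For $c^\ast = +\infty$ you need a small additional remark—e.g.\ let $\varepsilon_j \to \infty$ with $j$ so the constraint is satisfiable, and then observe that $\sup_{\mathbf{B}_0} E_{\mathbf{B}_0} P(\|\mathbf{B}_{n_j}-\mathbf{B}_0\|_F > \varepsilon_j \mid \mathbf{Z}_{n_j},\mathbf{W}_{n_j}) > \delta$ with $\varepsilon_j \uparrow \infty$ already forces failure of consistency at every fixed radius—rather than the bare assertion that the case ``requires no separate treatment.''
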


Almost all of the literature on asymptotics for Bayesian high-dimensional regression focuses on sufficient conditions for posterior consistency. Meanwhile, \textit{necessary} conditions are rarely investigated. We have only been able to find two other exceptions to this. Namely, \cite{GhoshKhareMichailidis2019} and  \cite{SparksKhareGhosh} investigated necessary conditions for posterior consistency in Gaussian linear regression models with Gaussian priors when $p \leq n$. Apart from the fact that they do not consider the $p \gg n$ regime,  \cite{GhoshKhareMichailidis2019} and 
\cite{SparksKhareGhosh} are able to handle the posterior distribution directly since they have Gaussian models with conjugate Gaussian priors. Their proof techniques thus seem difficult to adapt to cases where explicit, conjugate computations are unavailable. Taking a different approach, our proof of Theorem \ref{thm_inconsistent} provides a general recipe for proving posterior inconsistency which does \textit{not} require conjugacy.

Our results confirm that if all $p_n$ covariates are used to estimate $\mathbf{B}_0$, then posterior consistency is guaranteed if and only if $\log p_n = o(n)$. This motivates us to develop a two-step estimator in Section \ref{computation} which first \textit{reduces} the dimensionality of the covariate space and thus can consistently estimate the true $\mathbf{B}_0$ \textit{even if} $\log p_n = O(n^{\alpha}), \alpha \geq 1$.

Our proof technique in Theorem \ref{thm_inconsistent} may be useful for establishing necessary conditions for posterior consistency in other high-dimensional Bayesian models. For example, the proof of Theorem \ref{thm_inconsistent} can be suitably adapted to show that subexponential growth of $p$ with respect to $n$ is both necessary and sufficient for posterior consistency of one-stage estimators in univariate regression models. This is formalized in the following proposition.

\begin{proposition}
	Suppose that we have a univariate regression model where $p( y_i \mid \theta_{i0}) \propto \exp\{ -(y_i-\theta_{i0})^2 / 2 \}$ if the $y_i$'s are continuous, and $p(y_i \mid \theta_{0i}) \propto \{ \exp(\theta_{i0}) \}^{f^{(1)}} / \{ 1+\exp(\theta_{i0}) \}^{f^{(2)}}$ if the $y_i$'s are discrete. Assume that for $i=1, \ldots, n$, $\theta_{i0}= \mathbf{x}_i^\top \bm{\beta}_0$, where $\bm{\beta}_0 = (\beta_{01}, \ldots, \beta_{0p_n})^\top \in \mathbb{R}^{p}$ and $\sup_{1 \leq j \leq p_n} |\beta_{0j} | = O(\log p_n)$. Suppose that we estimate $\bm{\beta}_0$ with a GL prior \eqref{global_local_prior} (with $q=1$) and Assumptions (A2)-(A3) and (B1), {\picotwo (B2), and (B4)} hold. Then the condition that $\log p_n = o(n)$ is both necessary and sufficient for posterior consistency.
\end{proposition}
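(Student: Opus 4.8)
The plan is to read this univariate statement as the degenerate $q=1$ case (with no random effect, i.e. $\bm{\Sigma}_0 \equiv 0$) of the mixed-type multivariate model, so that both directions follow by specializing the proofs of Theorems \ref{one-step_contraction_rate} and \ref{thm_inconsistent}. The central simplification is that, with $q=1$ and $\theta_{i0}=\mathbf{x}_i^\top\bm{\beta}_0$, the P\'olya--gamma augmentation \eqref{p3}-\eqref{p4} collapses the model to the scalar latent regression $z_i=\mathbf{x}_i^\top\bm{\beta}+\varepsilon_i$ with $\varepsilon_i\sim\mathcal{N}(0,\omega_i^{-1})$ for discrete $y_i$ and $z_i=y_i$, $\omega_i=1$ for continuous $y_i$. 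Conditioning on $\mathbf{W}_n$ fixes the weights, so the induced marginal covariance of the latent vector is the fixed diagonal matrix $\mathrm{diag}(\omega_1^{-1},\ldots,\omega_n^{-1})$. The delicate bounds on the eigenvalues of $\bm{\Omega}_i^{-1}+\bm{\Sigma}$ that constitute the main difficulty in the multivariate proofs are therefore immediate here, and Assumption (A4) is vacuous.

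\emph{Sufficiency.} Assuming $\log p_n=o(n)$ (i.e. adding (A1)), I would verify that the proof of Theorem \ref{one-step_contraction_rate} carries over after setting $q=1$ and dropping the random-effect layer: the prior-mass lower bound near $\bm{\beta}_0$, the construction of exponentially powerful tests, and the control of the sieve complement all transfer, now with the scalar conditional variances $\omega_i^{-1}$ in place of the marginal covariances. Assumption (A3) supplies the restricted-eigenvalue and boundedness conditions on $\mathbf{X}_n$, while (B1)-(B2) together with the growth bound $\sup_j|\beta_{0j}|=O(\log p_n)$ play the roles of (B1)-(B3). Specializing the conclusion gives posterior contraction for $\bm{\beta}_n$ at rate $(s_0\log p_n/n)^{1/2}$; since (A2) forces $s_0=o(n/\log p_n)$, this radius tends to $0$, which is posterior consistency.

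\emph{Necessity.} For the converse I would specialize the recipe of Theorem \ref{thm_inconsistent}, which is explicitly built to avoid conjugacy and so does not exploit $q\geq 2$. Taking $\log p_n=Cn$ with $C>0$ and $s_0<\infty$, the same construction of an unfavorable $\bm{\beta}_0$ yields a fixed $\delta\in(0,1)$ with $\sup_{\bm{\beta}_0}E_{\bm{\beta}_0}P(\lVert\bm{\beta}_n-\bm{\beta}_0\rVert>\varepsilon\mid\mathbf{Z}_n,\mathbf{W}_n)>\delta$ as $n\to\infty$, so the posterior is inconsistent on the boundary. To upgrade this to the full necessity statement I would argue by contraposition: if $\log p_n\neq o(n)$, then $\limsup_n \log p_n/n=c>0$, so along a subsequence $\log p_{n_k}\geq (c/2)n_k$, and the inconsistency argument applies on that subsequence, precluding posterior consistency. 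Combining the two directions gives that $\log p_n=o(n)$ is both necessary and sufficient, exactly as in Corollary \ref{corollary_inconsistent}.

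\emph{Main obstacle.} Because the result is essentially the $q=1$ reduction of Theorems \ref{one-step_contraction_rate} and \ref{thm_inconsistent}, the only genuine work is confirming that neither proof tacitly uses multivariate structure in an essential way. The one point I expect to require real care is the heteroscedasticity introduced by the P\'olya--gamma weights $\omega_i^{-1}$ in the discrete case: one must check that, conditionally on $\mathbf{W}_n$, these weights stay bounded above and below on an event of probability tending to one under $E_{\bm{\beta}_0}$ (equivalently, that the effective design $\mathbf{X}_n^\top\mathrm{diag}(\omega_i)\mathbf{X}_n$ still obeys the restricted-eigenvalue bound of (A3)), so that the test and prior-mass estimates are uniform in the realized weights. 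Everything else is a transcription of the multivariate arguments with $q=1$.
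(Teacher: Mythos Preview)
Your proposal is correct and matches the paper's own treatment. The paper does not give a standalone proof of this proposition; it simply remarks that the proof of Theorem \ref{thm_inconsistent} ``can be suitably adapted'' to the univariate setting, and the sufficiency direction is implicitly inherited from Theorem \ref{one-step_contraction_rate}. Your plan to specialize both theorems to $q=1$ with no random effect is exactly that adaptation, and your identification of the heteroscedastic P\'olya--gamma weights as the one place requiring care (the analogue of Proposition \ref{PropositionA3} without the regularizing $\bm{\Sigma}$) is the right technical checkpoint.
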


\section{Two-step Mt-MBSP}
\label{computation}

\setcounter{equation}{0}

\subsection{Two-step algorithm} \label{twostep}
% We have established the Bayesian variable selection at the end of Section \ref{BVS}. However, 
% In practice, the one-step Gibbs sampling algorithm may give poor estimation,  especially when $p \propto \exp(n)$. Our theoretical results in Section \ref{theory} imply that the Bayesian variable selection procedure \eqref{a0} might only work if $\log p= o(n)$. 
In this section, we devise a new two-step approach suitable for large $p$ and study its asymptotic properties when $\log p_n = O(n^{\alpha}), \alpha \geq 1$. The regime of $\log p_n = O(n^{\alpha}), \alpha \geq 1$, subsumes slower growth rates (e.g. polynomial growth $p_n \propto n^{k}, k > 1$) as special cases. We stress that the algorithm in this section is meant to be applied in scenarios when $p \gg n$. If $p$ is small, then it may be sufficient to run the one-step Gibbs sampler to obtain good inference. 

Our proposed procedure is as follows:
\begin{itemize}
	\item[]\textbf{Step 1.} Fit the Mt-MBSP model with all $p$ predictors and obtain the 0.025 and 0.975 quantiles of the entries $b_{jk}$'s in $\mathbf{B}$. For some small threshold $\gamma > 0$, first find an initial set ${\cal A}_n$ such that
	\begin{equation} \label{setAn}
		{\cal A}_n= \left\{
		j \mid\mbox{
			$\max\limits_{k=1,\dots,q}q_{0.025}(b_{jk}) > -\gamma$~~or~~ 
			{\picotwo $ \min\limits_{k=1,\dots,q}q_{0.975}(b_{jk}) <\,\gamma$} 
            }
		\right\}.
	\end{equation}
	{\picotwo In \eqref{setAn},  the element $j$ in ${\cal A}_n$ must satisfy either the condition 
``\textit{at least one} of the $q$ entries in $\{b_{jk}\}_{k=1}^q$ meets $q_{0.025}(b_{jk}) > -\gamma$'' or  the condition  ``\textit{at least one} of the $q$ entries in $\{b_{jk}\}_{k=1}^q$ meets $q_{0.975}(b_{jk}) <  \gamma$.''} 
 Letting 
	$q_j= \lvert \max_{k=1,\dots,q}q_{0.5}( b_{jk}) \rvert$, where $q_{0.5}(\cdot)$ denotes the median, and $K_n = \min\{n-1, |\mathcal{A}_n| \}$, determine the final candidate set $\mathcal{J}_n \subset {\cal A}_n$ as
	\begin{equation} \label{setJn}
		{\cal J}_n= \left\{
		j \in {\cal A}_n \mid j \mbox{ is one of the $K_n$ largest  $q_j$'s}
		\right\}. 
	\end{equation}
	\item[]\textbf{Step 2.} Fit the one-step Mt-MBSP model to \textit{only} the final $K_n$ variables in the candidate set $\mathcal{J}_n$ from Step 1. Then select variables using selection rule \eqref{a0}.
\end{itemize}
Our approach differs from other feature screening methods, most of which are based on ranking the covariates by some measure of marginal utility such as marginal correlation \citep{fan2008sure, HeXuKang2019}. In contrast, Step 1 above more closely resembles the role of a slack variable in support vector machines (SVMs) which allows for violations of the margin \citep{camastra2005novel}. For example, if $\gamma = 0.05$ and $q_j$ has a corresponding 95\% posterior credible interval of $(-0.03, 0.12)$, then we would still regard the $j$th covariate as significant in Step 1 and include it in 
 the set $\mathcal{J}_n$. 

The goal of Step 1 in our two-step algorithm is to reduce the number of parameters to a much smaller candidate set, rather than to estimate the coefficients well. We \textit{want} the initial set \eqref{setAn} to be larger than the set \eqref{a0} so that we can identify more significant variables, particularly if estimation is suboptimal at this stage. Since $|{\cal A}_n|$ could still be large, we collect the variables in ${\cal A}_n$ with the $K_n$ largest values of $\{ \lvert \max_{k=1,\dots,q}q_{0.5}(b_{jk}) \rvert \}_{j=1}^{p}$ into a final candidate set $\mathcal{J}_n$, where $|\mathcal{J}_n | = K_n < n$. Step 2 of our algorithm then further refines the search only on the set $\mathcal{J}_n$. 

In practice, both Step 1 and Step 2 are fit using the Gibbs sampler in Appendix \ref{S1:Gibbs}. However, since Step 1 typically screens out a very large number of covariates, Step 2 is computationally inexpensive to run, regardless of the size of $p$. We also employ the fast sampling technique of \cite{bhattacharya2016fast} in Step 1, so that we can avoid having to invert a large $p \times p$ matrix when sampling from the columns of $\mathbf{B}$. The fast sampling algorithm of \cite{bhattacharya2016fast} ensures that the computational complexity for each Gibbs sampling iteration of Step 1 is only linear in $p$, thereby ensuring the computational feasibility of our approach.

{\pico The role of $\gamma$ in \eqref{setAn} is distinct from the role of the global shrinkage parameter $\tau$ in the GL shrinkage prior \eqref{global_local_prior}. Under sparse situations, the global shrinkage parameter $\tau$ shrinks all of the posterior estimates in $\mathbf{B}$ to zero, which decreases the chance of making Type I errors (or false discoveries). The local shrinkage parameters $\xi_j$'s in \eqref{global_local_prior} counteract the chances of making Type II errors (or false negatives). Even so, the selection rule \eqref{a0} may not work well in high dimensions, especially in light of Theorem \ref{thm_inconsistent}. On the other hand, $\gamma$ in \eqref{setAn} allows more variables to remain in the initial set $\mathcal{J}_n$ even if they fail to meet the selection criterion \eqref{a0}. This not only helps to control the Type II error when $p \gg n$ but it also improves the estimation performance in Step 2.}

% \begin{figure}[t]
	%	\centering
	%	\includegraphics[scale=0.52]{fig/timing_plots.pdf} 
	%	\caption{Plots comparing the total runtime of the proposed two-step approach (dashed line) to the one-step approach (solid line).}
	%	\label{fig:timing_plots}
	% \end{figure}

% In Figure \ref{fig:timing_plots}, we plot the average runtime from ten replications for the two-step approach (dashed line) vs. the one-step approach (solid line) under the simulation settings from Scenarios 1 and 4 in Section \ref{simulation}. Both approaches were run for 3000 total iterations; in the two-step approach, we ran the first stage for only 200 iterations. We fixed $q=4$ and varied $p \in \{ 500, 1000, 2000, 5000, 10{,}000 \}$, so that we had between $pq + q(q+1)/2 = 2010$ to 40{,}010 total unknown parameters in $(\mathbf{B}, \bm{\Sigma})$. Figure \ref{fig:timing_plots} shows that the two-stage approach scales more favorably as $p$ increases. In particular, our two-step procedure significantly reduces the runtime since a large number of insignificant variables are screened out after the first 200 iterations.

% Intuitively, if the one-step and two-step algorithms are both run for the same total number of iterations, then the computational time of the two-step approach will be smaller because the Gibbs sampler is run with much fewer than $p$ variables for most of its iterations. However, it is unclear if this practice is justified. In what follows, we rigorously justify this procedure. 

\subsection{Sure screening property and sharper contraction rate} \label{twosteptheory}

We now rigorously justify our two-step procedure. The next theorem states that under a mild ``beta-min'' condition, our two-step algorithm has the sure screening property when $\log (p_n) = O(n^{\alpha})$, $\alpha \geq 1$. Intuitively, we require the nonzero entries in $\mathbf{B}_0$ to be sufficiently large; otherwise, it would be difficult to distinguish them from zero.

\begin{theorem}[screening property] \label{thm_C}
	Suppose that $\log (p_n) = O(n^{\alpha})$ where $\alpha \geq 1$, $s_0 = o(n)$ and $s_0 \geq 1$, and Assumptions (A3)-(A4) and (B1)-{\picotwo(B4)} hold. Further, assume that 
	\[
	\min_{j\in S_0}\max_{k=1,\dots,q} |(\mathbf{B}_{0})_{ij}| \geq \frac{c_3}{n^{\zeta}} \mbox{~~for some  $2\zeta < \alpha$},
	\]
	where $S_0$ is the set of indices of the true nonzero rows in $\mathbf{B}_0$. Let $\gamma$ in \eqref{setAn} be set to $\gamma = (r_n\log p_n/n)^{1/2}$, where $r_n \in (s_0,~n)$, and let $\mathcal{J}_n$ be the final candidate set \eqref{setJn} in Step 1. Then under the two-step algorithm,
	\begin{equation*}
		\sup_{\mathbf{B}_0}  E_{\mathbf{B}_0}  P( S_0\subset {\cal J}_n\mid {\picotwo \mathbf{X}_n, \mathbf{Y}_n} )\longrightarrow 1\mbox{~~as $n \rightarrow \infty$}.
	\end{equation*}
\end{theorem}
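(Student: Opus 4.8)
The plan is to reduce the event $\{S_0 \subset \mathcal{J}_n\}$ to two quantitative statements about the per-coordinate posterior medians $q_{0.5}(b_{jk})$ produced by the one-step fit with all $p_n$ predictors, and then to combine them with a counting argument for the top-$K_n$ selection in \eqref{setJn}. The essential difficulty, and what distinguishes this from Theorem \ref{one-step_contraction_rate}, is that we are in the regime $\log p_n = O(n^\alpha)$, $\alpha \geq 1$, where the one-step posterior need not contract (Theorem \ref{thm_inconsistent}); hence we cannot route through a Frobenius-norm contraction bound and must instead control individual posterior quantiles directly. Throughout I would condition on the latent data $(\mathbf{Z}_n, \mathbf{W}_n)$ and exploit the conditionally Gaussian structure of \eqref{pbeta3}: given the local scales in \eqref{global_local_prior} and $\bm{\Sigma}$, each $b_{jk}$ has an explicit Gaussian full conditional whose mean and variance I can bound using the design and covariance controls in (A3)--(A4).

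First I would establish a signal-detection lower bound: for every $j \in S_0$, with probability under the true model tending to one, $q_j = \lvert \max_{k} q_{0.5}(b_{jk}) \rvert$ stays above a threshold $\delta_n$ of the order of the beta-min floor $c_3 n^{-\zeta}$. The mechanism is that at the strong coordinate $k^*$ where $\lvert (\mathbf{B}_0)_{jk^*} \rvert \geq c_3 n^{-\zeta}$, the heavy polynomial tail guaranteed by (B2) prevents the global scale $\tau$ in (B1) from overshrinking a genuinely large signal, so the conditional posterior of $b_{jk^*}$ concentrates near its true value and keeps $q_j$ above $\delta_n$. Concentration of $\mathbf{Z}_n$ about its mean $\mathbf{B}_0^\top \mathbf{x}_i$, together with the lower eigenvalue bound in (A3) and the $\bm{\Sigma}_0$-eigenvalue bound in (A4), is what makes the likelihood informative enough at coordinate $(j,k^*)$.

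Second I would prove a noise-control bound showing that the number of indices $j$ with $q_j \geq \delta_n$ cannot exceed $K_n = \min\{n-1, |\mathcal{A}_n|\}$ with probability tending to one. Here the tiny global scale $\tau = c\, p_n^{-1} n^{-\rho-1}$ from (B1) does the work: it forces the posterior medians of null coordinates to be negligible, and a union/Markov bound over the at most $p_n$ coordinates controls how many nulls can spuriously cross $\delta_n$. This is exactly where the trade-off $2\zeta < \alpha$ enters --- it guarantees that the detection threshold $\delta_n \asymp n^{-\zeta}$ sits strictly above the surviving noise floor induced by $\tau$ and the screening radius $\gamma = (r_n \log p_n/n)^{1/2}$, so that the expected number of false crossings is $o(n)$.

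Finally I would assemble the pieces. It is straightforward to check $S_0 \subset \mathcal{A}_n$ with high probability: at the strong coordinate $k^*$ the correctly-signed, concentrated quantiles make at least one of the two disjunctive inequalities in \eqref{setAn} hold, and the radius $\gamma$ only makes inclusion easier. Combining this with the signal-detection bound (all $s_0 \leq K_n$ signals have $q_j \geq \delta_n$) and the noise-control bound (at most $K_n$ indices reach $\delta_n$), the $s_0$ signals occupy the top ranks of $\{q_j\}$ within $\mathcal{A}_n$ and therefore survive into $\mathcal{J}_n$. Since every bound is uniform over the admissible $\mathbf{B}_0$, I can take $E_{\mathbf{B}_0}$ and then $\sup_{\mathbf{B}_0}$ to obtain the stated convergence to $1$. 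I expect the main obstacle to be the non-conjugate, correlated-response structure underlying the first two steps: controlling the marginal posterior medians requires bounding the local-scale posteriors (heavy tails keeping signals, global scale killing noise) and the eigenvalues of the induced latent covariance simultaneously, uniformly across $O(p_n)$ coordinates in the exponential regime, without recourse to a global contraction rate.
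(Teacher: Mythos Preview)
Your plan rests on a premise that is actually false, and this is the main gap. You write that because the one-step posterior need not contract in the regime $\log p_n=O(n^\alpha)$, $\alpha\ge 1$, ``we cannot route through a Frobenius-norm contraction bound and must instead control individual posterior quantiles directly.'' But the paper does exactly what you rule out. It first proves a general concentration result (the appendix theorem labeled \ref{thm_D}, which is Theorem~\ref{one-step_contraction_rate} with (A1)--(A2) relaxed to the weaker (S1)--(S2)): under $\log p_n=O(n^\alpha)$ and $s_0=o(n)$, for any $\varepsilon>0$ and any $\delta_n^2\in(s_0\log p_n/n,\,s_0\log p_n)$,
\[
\sup_{\mathbf{B}_0} E_{\mathbf{B}_0} P\big(\|\mathbf{B}_n-\mathbf{B}_0\|_F>\varepsilon\delta_n \mid \mathbf{Z}_n,\mathbf{W}_n\big)\longrightarrow 0.
\]
The point you are missing is that Theorem~\ref{thm_inconsistent} says the posterior fails to concentrate inside a ball of \emph{fixed} radius, but it does not preclude concentration at radius $\delta_n=(r_n\log p_n/n)^{1/2}$, which in the exponential regime is bounded away from zero (or even diverging). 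The paper then simply takes $\delta_n=\gamma$, reads off from the Frobenius bound the uniform coordinatewise control $\max_k|(\mathbf{B}_n)_{jk}-(\mathbf{B}_0)_{jk}|\le\varepsilon\gamma$ for all $j$ simultaneously, translates this to the quantiles $q_{0.025},q_{0.5},q_{0.975}$, and invokes the beta-min condition (with $2\zeta<\alpha$) to separate $j\in S_0$ from $j\notin S_0$ in the ranking by $q_j$.

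Because you dismissed this route, your proposal is forced into a much harder programme whose second step is not adequately justified. To control how many null medians exceed $\delta_n$ by a union or Markov argument, you would need each null posterior median to exceed $\delta_n$ with probability $o(n/p_n)$; with $p_n$ growing like $\exp(c n^\alpha)$ this is an extremely strong per-coordinate tail bound, and nothing in your sketch (the global scale $\tau$ in (B1) plus a Markov inequality) delivers it. In effect, obtaining that simultaneous control is tantamount to reproving the Frobenius concentration result you set aside. The cleanest fix is to restore the global bound: prove (or cite) the concentration at radius $\delta_n=\gamma$ under (S1)--(S2), then run the straightforward coordinatewise and quantile deductions as the paper does.
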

The screening property in Theorem \ref{thm_C} guarantees that the final candidate set ${\cal J}_n$ in Step 1 of the proposed method can capture the true set of nonzero regression coefficients $S_0$, even if $\log p_n/n\rightarrow\infty$. Based on Theorem \ref{thm_C}, we construct a two-step estimator $\widetilde{\mathbf{B}}_n \in \mathbb{R}^{p_n \times q}$ as follows:
\begin{itemize}
	\item If $j \in \mathcal{J}_n$, then set the $(j,k)$th entry $\widetilde{b}_{jk}$ to be the estimator obtained from Step 2 of the two-step algorithm. 
	\item Otherwise, if $j \notin \mathcal{J}_n$, then set $\widetilde{b}_{jk} = 0$.  
\end{itemize}
The next theorem establishes the convergence rate of this two-step estimator $\widetilde{\mathbf{B}}_n$.

\begin{theorem}[posterior contraction rate for two-step Mt-MBSP estimator] \label{thm_D2}
	Assume the same setup as Theorem \ref{thm_C} where $\log p_n = O(n^{\alpha}), \alpha \geq 1$. For the set $\mathcal{J}_n$ in \eqref{setJn}, assume that $|\mathcal{J}_n | = K_n$ satisfies $s_0\log K_n=o(n)$. Then for any arbitrary $\varepsilon>0$, the two-step estimator $\widetilde{\mathbf{B}}_n$ satisfies
	\begin{equation*}
		\sup_{\mathbf{B}_0}  E_{\mathbf{B}_0} P \left( \lVert \widetilde{\mathbf{B}}_n - \mathbf{B}_0 \rVert_F > \varepsilon  \left( \frac{s_0\log K_n}{n}\right)^{1/2} ~\bigg|~{\picotwo \mathbf{X}_n, \mathbf{Y}_n}
		\right)\longrightarrow  0\mbox{~as~} n\rightarrow \infty.
	\end{equation*}
\end{theorem}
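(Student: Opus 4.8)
The plan is to reduce the analysis of the two-step estimator to the one-step contraction result of Theorem~\ref{one-step_contraction_rate}, applied to the lower-dimensional screened model. The crucial observation is that, conditionally on $(\mathbf{Z}_n,\mathbf{W}_n)$, the candidate set $\mathcal{J}_n$ defined in \eqref{setJn} is a \emph{deterministic} function of the Step~1 posterior summaries, so the screening event
\[
E_n := \{\, S_0 \subset \mathcal{J}_n \,\}
\]
is measurable with respect to $(\mathbf{Z}_n,\mathbf{W}_n)$, and on $E_n$ the Step~2 posterior is simply the one-step Mt-MBSP posterior fitted to the fixed $K_n$-dimensional design $\mathbf{X}^{\mathcal{J}_n}$, whose latent structure \eqref{pbeta3} is identical to the full model restricted to the columns in $\mathcal{J}_n$.

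First I would split the posterior probability according to whether screening succeeds. Writing $r_n=(s_0\log K_n/n)^{1/2}$ and $A_n=\{\lVert \widetilde{\mathbf{B}}_n-\mathbf{B}_0\rVert_F>\varepsilon r_n\}$, I decompose
\[
E_{\mathbf{B}_0} P(A_n\mid \mathbf{Z}_n,\mathbf{W}_n)
= E_{\mathbf{B}_0}\!\big[\, P(A_n\mid \mathbf{Z}_n,\mathbf{W}_n)\,\mathbf{1}_{E_n}\,\big]
+ E_{\mathbf{B}_0}\!\big[\, P(A_n\mid \mathbf{Z}_n,\mathbf{W}_n)\,\mathbf{1}_{E_n^c}\,\big].
\]
The second term is bounded by $P_{\mathbf{B}_0}(E_n^c)$, which tends to $0$ uniformly in $\mathbf{B}_0$ by the sure screening property of Theorem~\ref{thm_C}, using that $P(S_0\subset\mathcal{J}_n\mid \mathbf{Z}_n,\mathbf{W}_n)=\mathbf{1}_{E_n}$ because $\mathcal{J}_n$ is $\sigma(\mathbf{Z}_n,\mathbf{W}_n)$-measurable. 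It therefore suffices to control the first term.

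On $E_n$ the true support $S_0$ lies inside $\mathcal{J}_n$, so $\mathbf{B}_0$ vanishes on $\mathcal{J}_n^c$; by construction $\widetilde{\mathbf{B}}_n$ also vanishes on $\mathcal{J}_n^c$. Hence the Frobenius error collapses to the screened coordinates, $\lVert \widetilde{\mathbf{B}}_n-\mathbf{B}_0\rVert_F=\lVert \widetilde{\mathbf{B}}_n^{\mathcal{J}_n}-\mathbf{B}_0^{\mathcal{J}_n}\rVert_F$, and $\mathbf{B}_0^{\mathcal{J}_n}$ retains all $s_0$ nonzero rows. Thus the Step~2 fit is a one-step Mt-MBSP fit to a design with $K_n<n$ columns and true sparsity $s_0$, and I would verify that the hypotheses of Theorem~\ref{one-step_contraction_rate} hold for this reduced problem: since $K_n\le n-1$ we have $\log K_n=O(\log n)=o(n)$, so the dimension part of (A1) holds; the sparsity requirement becomes $s_0\log K_n=o(n)$, which is assumed; Assumptions (A3)--(A4) constrain \emph{every} submatrix of $\mathbf{X}_n$ of size strictly less than $n$ and hence apply to $\mathbf{X}^{\mathcal{J}_n}$; and (B1)--(B3) are unchanged. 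Applying Theorem~\ref{one-step_contraction_rate} to the $K_n$-dimensional model then forces the first term to $0$ at the stated rate $r_n$.

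The main obstacle is the data-dependence of $\mathcal{J}_n$: because Steps~1 and~2 reuse the same latent data $(\mathbf{Z}_n,\mathbf{W}_n)$, the reduced design is chosen using the very data fed into the Step~2 posterior, so one cannot naively treat $\mathcal{J}_n$ as fixed or independent. I resolve this by always conditioning on $(\mathbf{Z}_n,\mathbf{W}_n)$, under which $\mathcal{J}_n$ is deterministic and Step~2 is a genuine one-step posterior on a fixed design; since the contraction bound of Theorem~\ref{one-step_contraction_rate} is itself a conditional (given $\mathbf{Z}_n,\mathbf{W}_n$) statement, it transfers directly. Care is still needed to ensure that the high-probability design events underlying the proof of Theorem~\ref{one-step_contraction_rate} hold uniformly over all admissible realizations of $\mathcal{J}_n$ with $S_0\subset\mathcal{J}_n$ and $|\mathcal{J}_n|=K_n$; this is exactly what Assumption (A3) delivers, as its eigenvalue lower bound is required of all subsets of size less than $n$, yielding the needed uniformity over candidate sets. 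Combining the two terms then gives the claimed contraction rate $(s_0\log K_n/n)^{1/2}$, which is sharper than the one-step rate whenever $K_n\ll p_n$.
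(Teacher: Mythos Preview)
Your proposal is correct and follows essentially the same route as the paper: split on the screening event $\{S_0\subset\mathcal{J}_n\}$, use Theorem~\ref{thm_C} to kill the failure part, collapse the Frobenius error to the $\mathcal{J}_n$-block on the success part, and invoke Theorem~\ref{one-step_contraction_rate} on the reduced $K_n$-dimensional model. If anything, you are more careful than the paper's proof, which is only a few lines and does not spell out the $\sigma(\mathbf{Z}_n,\mathbf{W}_n)$-measurability of $\mathcal{J}_n$, the verification of (A1)--(A4) and (B1)--(B3) for the screened problem, or the uniformity over realizations of $\mathcal{J}_n$ that Assumption~(A3) supplies.
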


Theorem \ref{thm_D2} shows that, unlike the one-step estimator analyzed in Section \ref{theory}, the two-step estimator $\widetilde{\mathbf{B}}_n$ \textit{can} consistently estimate $\mathbf{B}_0$ with posterior contraction rate $(s_0\log K_n/n)^{1/2} \rightarrow 0$ when $\log p_n/n\rightarrow\infty$. Since $K_n = \min\{ n-1, | \mathcal{A}_n| \}$, Theorem \ref{thm_D2} essentially requires that $s_0 = o(n / \log n)$ in order for the two-step estimator $\widetilde{\mathbf{B}}_n$ to achieve posterior contraction. This is less stringent than Assumption (A2) that $s_0 = o(n / \log p_n)$, which is needed for posterior contraction of the one-step estimator in Theorem \ref{one-step_contraction_rate}.  Thus, one practical benefit of the two-step estimator is that it can achieve consistent estimation under less strict requirements on the true sparsity level. 

We stress that the growth rate $\log p_n = O(n^{\alpha}), \alpha \geq 1$, encompasses scenarios where $p_n$ can grow subexponentially. For example, Theorems \ref{thm_C} and \ref{thm_D2} can allow $\log p_n \propto n^{\delta}$, where $0 < \delta < 1$, and even slower growth rates as well. As long as $K_n \prec n \prec p_n$, the two-step estimator's contraction rate of $(s_0 \log K_n / n)^{1/2}$ in Theorem \ref{thm_D2} is provably sharper than the one-step estimator's contraction rate of $(s_0 \log p_n / n)^{1/2}$ in Theorem \ref{one-step_contraction_rate}. Thus, the two-step estimator has an advantage over the one-step estimator whenever $p_n$ diverges faster than $n$ (not necessarily exponentially fast). As shown in Theorem \ref{thm_inconsistent}, the one-step estimator is also incapable of consistent estimation when $p_n$ grows exponentially fast, unlike the two-step estimator.

\subsection{Selecting the threshold in two-step Mt-MBSP}

{\pico 
In Theorem \ref{thm_C}, the threshold $\gamma$ in Step 1 of our two-step MT-MBSP algorithm depends on $r_n \in (s_0, n)$. However, in practice, the true sparsity level $s_0$ is unknown. One approach could be to first obtain an estimate for the sparsity level $\widehat{s}_0$ and then set (for example) $\widehat{\gamma} = [(\widehat{s}_0+1) \log p_n / n]^{1/2}$. This type of empirical Bayes approach has been considered by several authors \citep{Rockova2018,vanderPasKleijnvanderVaart2014}. 

Another method is to tune $\gamma$ from a grid of candidate values, as in \citep{KonerWilliams2023, narisetty2014bayesian}. We prefer this approach since it obviates the challenge of obtaining an estimate for $s_0$. We propose tuning $\gamma$ from a set of candidate values using the Watanabe-Akaike information criterion (WAIC) \citep{Watanabe2010}. Let
\begin{align*} \label{logLL}
     & {\picotwo \ell( \mathbf{B}_{\mathcal{J}_n(\gamma)}, \boldsymbol{\Sigma} \mid \mathbf{z}_i, \boldsymbol{\omega}_i)} \\
     & {\picotwo = - \frac{1}{2} \log | \bm{\Omega}_i + \bm{\Sigma} | -\frac{1}{2} (\mathbf{z}_i - \mathbf{B}_{\mathcal{J}_n(\gamma)}^\top \mathbf{x}_{i, \mathcal{J}_n(\gamma)})^\top ( \boldsymbol{\Omega}_i + \boldsymbol{\Sigma} )^{-1} ( \mathbf{z}_i - \mathbf{B}_{\mathcal{J}_n(\gamma)}^\top \mathbf{x}_{i, \mathcal{J}_n(\gamma)})}, \numbereqn\\
\end{align*}
where $\mathcal{J}_n(\gamma)$ is the set \eqref{setJn} determined by using $\gamma$ as the threshold in \eqref{setAn}, $\mathbf{B}_{\mathcal{J}_n(\gamma)}$ is the submatrix of $\mathbf{B}$ with row indices in $\mathcal{J}_n(\gamma)$, and $\mathbf{x}_{i, \mathcal{J}_n(\gamma)}$ is the subvector of $\mathbf{x}_i$ with indices in $\mathcal{J}_n(\gamma)$. {\picotwo  Technically, the log-likelihood for the $i$th observation contains a term $\log[ p_0(\mathbf{z}_i, \bm{\omega}_i)]$ where $p_0(\mathbf{z}_i, \bm{\omega}_i)$ is the baseline function in \eqref{jointzomega}. However, this contribution to the log-likelihood does not depend on the parameters $(\mathbf{B}_{\mathcal{J}_n(\gamma)}, \bm{\Sigma})$ and is typically small in practice. Therefore, we use \eqref{logLL} in the WAIC, which corresponds to the log of the second term depending on $(\mathbf{B}, \bm{\Sigma})$ in \eqref{jointzomega} after integrating out the random effect $\mathbf{u}_i$.}

Let $m_i$ and $v_i$ denote the estimated posterior mean and variance of $\ell( \mathbf{B}_{\mathcal{J}_n(\gamma)}, \boldsymbol{\Sigma} \mid \mathbf{z}_i, \boldsymbol{\omega}_i)$ in \eqref{logLL}. The WAIC for a given $\gamma$ is defined as
\begin{equation} \label{WAIC}
 \text{WAIC}(\gamma) = -2 \sum_{i=1}^{n} m_i + 2 \sum_{i=1}^{n} v_i.
\end{equation}
We select the $\gamma$ which minimizes the WAIC \eqref{WAIC}. In practice, we run Step 2 of the two-step Mt-MBSP algorithm in parallel for each candidate $\gamma$ and estimate \eqref{WAIC} from the MCMC samples in Step 2.

\cite{KonerWilliams2023, narisetty2014bayesian} used the Bayesian information criterion (BIC) \citep{Schwarz1978} to select a suitable threshold in their Bayesian variable selection methods.  We prefer to use WAIC \eqref{WAIC}, since WAIC is asymptotically equivalent to Bayesian leave-one-out cross-validation (LOOV) while also being much less computationally intensive than LOOCV \citep{Watanabe2010}. Unlike other criteria such as BIC or deviance information criterion (DIC) \citep{Spiegelhalter2002}, WAIC also does \emph{not} require the posterior to be approximately normal. Finally, WAIC is more ``fully Bayesian'' since it averages each term \eqref{logLL} over the entire posterior distribution, whereas BIC and DIC are conditional on a single point estimate of \eqref{logLL} \cite{GelmanHwangVehtari2014}. 

\begin{remark}
    In practice, it cannot be verified whether the $\gamma$ selected by the WAIC criterion satisfies $\gamma = (r_n \log p_n / n)^{1/2}$ for some $r_n \in (s_0, n)$ as in Theorem \ref{thm_D2}. This is because $s_0$ is typically unknown. However, models with lower WAIC indicate that these particular subsets of predictors \eqref{setJn} provide better fits to the data. Thus, choosing $\gamma$ from WAIC should at least produce models which exclude spurious predictors that are not supported by the data. More rigorous theoretical guarantees of WAIC in the present context can be explored in future work.
\end{remark}
}

\section{Simulations and real data analyses}\label{experiments}

{\pico In this section, we illustrate Mt-MBSP on a variety of simulated and real datasets with a wide range of sizes for $p$. In Section \ref{simulation}, we provide numerical evidence to support our theoretical results from Sections \ref{theory} and \ref{computation} in finite samples. In particular, we show that when $p > n$, two-step MBSP outperforms one-step Mt-MBSP in terms of estimation and variable selection performance. In Section \ref{applications1}, we demonstrate Mt-MBSP on a chronic kidney disease dataset with a small number of covariates ($p=23$), and in Section \ref{applications2}, we study a very high-dimensional cancer dataset with $p=9183$ genes.}

\subsection{Simulation studies}
\label{simulation}

To corroborate our theory, we conducted simulation studies with different combinations of continuous, binary, and count responses in $\mathbf{Y}_n$. We have implemented our method in an \textsf{R} package, available at \url{https://github.com/raybai07/MtMBSP}.

% {\pico According to Theorems \ref{one-step_contraction_rate} and \ref{thm_D2}, we expect that when $p> n$ (and $p$ grows sub-exponentially with $n$), the estimation error for both the one-step and two-step Mt-MBSP estimators should decrease as $n$ and $p$ increase. However, we expect the two-step estimator to have a \emph{lower} estimation error than the one-step estimator (Theorem \ref{thm_D2}). According to Theorem \ref{thm_C}, we also expect the variable selection performance of the two-step estimator to improve upon the one-step estimator, particularly in terms of the true discovery rate (i.e. sensitivity). This is because Theorem \ref{thm_C} guarantees that under some conditions, the subset of variables chosen from Step 1 properly contains the true set of non-null variables. The experiments in this section corroborate these theoretical results.}

We considered the following {\picotwo triplets for $(p ,n, s_0)$: $(500, 100, 6)$, $(1000, 150, 8)$, and $(2000, 200, 10)$}.  Therefore, we were in the situation where $p > n$, $p$ grows faster than $n$, {\picotwo and $s_0$ also grows but at a rate of $o(n / \log p)$}. The rows of the design matrix $\mathbf{X}_n$ were independently generated from $\mathcal{N}_p(\bm{0}, {\picotwo \bm{\Gamma}_X})$, where ${\picotwo \bm{\Gamma}_X} = (\Gamma_{ij})$ with $\Gamma_{ij} = 0.5^{|i-j|}$. For the true matrix $\mathbf{B}_0$ in \eqref{blatent}, we randomly chose {\picotwo $s_0$ of the rows} to be nonzero and {\picotwo set} the remaining rows to be zero. {\picotwo For each nonzero row, we first randomly sampled $t \in \{ 1, \ldots, q\}$ with equal probability and then randomly selected $t$ of the entries in that row to be nonzero. Therefore, the nonzero rows could also contain null entries, and some of these rows might contain only a single nonzero entry. If $t > 1$, then the nonzero entries corresponding to continuous or binary responses were sampled independently from $\mathcal{U} ([-2, -0.5] \cup [0.5, 2])$, while the nonzero entries corresponding to count responses were sampled independently from $\mathcal{U}([-0.8, -0.4] \cup [-0.4, 0.8])$. If $t=1$, then the single nonzero entry in that row was chosen to be either 1.5 or -1.5 with equal probability if the corresponding response variable was a continuous or binary response; otherwise, it was chosen to be either 0.75 or -0.75 with equal probability if the corresponding response variable was a count response.} The reason that we simulated smaller nonzero coefficients for the count responses was because in order to obtain realistic count values, we could not allow the corresponding entries in $\mathbf{B}_0$ to be too large. 

{\picotwo Following \cite{ChenHuang2012}, we generated the covariance matrix $\bm{\Sigma}$ as $\bm{\Sigma} = \sigma^2 \bm{\Xi}$ where the diagonal elements of $\bm{\Xi} $ were set to one and the off-diagonal entries were set to $0.5$. We chose $\sigma^2$ so that the signal-to-noise ratio $\text{tr}(\mathbf{B}_0^\top \bm{\Gamma}_{X} \mathbf{B}_0) / \text{tr}(\mathbf{U}^\top\mathbf{U})$ was equal to one, where $\mathbf{U} = (\mathbf{u}_1, \ldots, \mathbf{u}_n)^\top$ is the $n \times q$ noise matrix with rows $\mathbf{u}_i \sim \mathcal{N}_q(\bm{0}, \bm{\Sigma})$.}
Finally, based on \eqref{gaussianresponse}-\eqref{polson_model} and \eqref{blatent}, we generated the continuous outcomes from a Gaussian distribution, the binary outcomes from a Bernoulli distribution, and the count outcomes from an NB distribution with $r=50$.

We  considered the following six simulation scenarios:
\begin{itemize}
	\item[] Scenario 1: two continuous and two binary responses ($q=4$)
	\item[] Scenario 2: two continuous, two binary, two count responses ($q=6$)
	\item[] Scenario 3: three count and two continuous responses ($q=5$) 
	\item[] Scenario 4: four binary responses ($q=4$)
	\item[] Scenario 5: three binary and two count responses ($q=5$)
	\item[] Scenario 6: three count responses ($q=3$)
\end{itemize}
We generated 100 synthetic datasets for each scenario and fit the Mt-MBSP model with horseshoe priors (i.e. TPBN priors \eqref{TPBN} with $u=a=0.5$) as the local scale parameters \eqref{global_local_prior}. For the other hyperparameters in the Mt-MBSP model, {\picotwo we followed \cite{BaiGhosh2018, ZHANG2022104835}} and set $\tau = \max \{ p^{-1} (n \log n)^{-1/2}, 10^{-5} \}$ in \eqref{Mt-MBSP} and $d_1 = q$ and $d_2 = 10$ in \eqref{IWpriorSigma}. Although Theorems \ref{one-step_contraction_rate}-\ref{thm_D2} imply that the theoretical value of $\tau$ should be even smaller (Assumption (B2)), the choice for $\tau$ in (B2) is extremely close to zero for large $p$ and causes numerical problems in practice \cite{ZHANG2022104835, vanderPasKleijnvanderVaart2014}. For count responses, we endowed the dispersion parameter $r$ with a $\mathcal{G}(c_1,c_2)$ prior with $c_1=10$ and $c_2=1$. 

We ran the one-step algorithm (Section \ref{BVS}) for 1100 MCMC iterations, with a burn-in of 100 iterations. For the two-step algorithm (Section \ref{twostep}), Step 1 and Step 2 were each run for 1100 MCMC iterations, with a burn-in of 100 iterations. {\pico Our MCMC convergence analysis in Appendix \ref{S5:Diagnostics} confirms that this number of Gibbs sampling iterations was enough to achieve convergence and obtain quality MCMC posterior estimates. For two-step Mt-MBSP, we tuned the threshold $\gamma$ in Step 1 using WAIC \eqref{WAIC} from a grid of candidate values $\{ 0.02, 0.04, \ldots, 0.40 \}$. This choice of grid worked well in our simulations; however, on real data, practitioners may want to experiment with different choices of candidate values for $\gamma$.}

 {\pico Let $\widehat{\mathbf{B}}$ denote a posterior median estimator for $\mathbf{B}_0$, and let $\widehat{\mathbf{B}}^{S_0}$ and $\mathbf{B}^{S_0}$ denote the $s_0 \times q$ submatrices of $\widehat{\mathbf{B}}$ and $\mathbf{B}_0$ which contain the row indices in $S_0$. That is, $\mathbf{B}^{S_0}$ contains the true \emph{nonzero} rows of $\mathbf{B}_0$.} Meanwhile, TP, FP, TN, and FN denote the number of true positives, false positives, true negatives, and false negatives, respectively. {\picotwo Note that TP, FP, TN, and FN were calculated using all $pq$ regression coefficients. This is because some of the $s_0$ nonzero rows in $\mathbf{B}_0$ contained individual entries equal to zero.} 
 
 We recorded the following metrics for both one-step Mt-MBSP and two-step Mt-MBSP:  
\begin{itemize}
    \item root mean squared error (rMSE) where rMSE $ = (pq)^{-1/2} \lVert \widehat{\mathbf{B}} - \mathbf{B}_0 \rVert_F$;
    \item  {\pico rMSE for $\mathbf{B}^{S_0}$, i.e. $\text{rMSE}(S_0)$ where $\text{rMSE}(S_0) = (s_0 q)^{-1/2} \lVert \widehat{\mathbf{B}}^{S_0} - \mathbf{B}_0^{S_0} \rVert_F$;}
    \item coverage probability (CP) for the 95\% posterior credible intervals, i.e. the percentage of credible intervals for the entries $b_{jk}$'s that contained the true $(\mathbf{B}_0)_{jk}$'s;
    \item sensitivity (Sens) where Sens = TP/(TP+FN);
    \item specificity (Spec) where Spec = TN/(TN+FP);
    \item and Matthews correlation coefficient (MCC) \cite{Matthews1975} where  
    \begin{align*}
        \text{MCC} = \frac{\text{TP} \times \text{TN}-\text{FP} \times \text{FN}}{ \left\{ (\text{TP}+\text{FP}) \times (\text{TP}+\text{FN}) \times (\text{TN}+\text{FP}) \times (\text{TN}+\text{FN}) \right\}^{1/2}}. 
    \end{align*}
    MCC lies between -1 and 1, with a higher MCC indicating greater selection accuracy.
\end{itemize}
{\pico In two-step Mt-MBSP, recall that $\mathcal{J}_n$ \eqref{setJn} is the set of variables that are \textit{not} screened out after Step 1. Our two-step Mt-MBSP estimator $\widetilde{\mathbf{B}}$ is by construction a $p \times q$ matrix where the rows $\mathbf{b}_j, j \notin \mathcal{J}_n$, are fixed to $\boldsymbol{0}_q$. Therefore, to corroborate the result in Theorem \ref{thm_D2}, we evaluated the estimation performance (or rMSE) using \textit{all} $pq$ entries in $\widetilde{\mathbf{B}}$. However, unlike two-step Mt-MBSP, the one-step Mt-MBSP estimator is \emph{not} exactly sparse. Thus, to ensure a fair comparison between one-step Mt-MBSP and two-step Mt-MBSP, we also evaluated rMSE($S_0$), i.e. the rMSE for \emph{only} the true nonzero rows in $\mathbf{B}_0$.

In our two-stage estimator, only the rows $\mathbf{b}_j, j \in \mathcal{J}_n$, are re-estimated in Step 2. Thus, for two-step Mt-MBSP, we used the 95\% credible intervals from Step 1 to quantify uncertainty for the entries in $\mathbf{b}_j, j \in \mathcal{J}_n^c$ (i.e. the entries estimated as $\widetilde{b}_{jk} = 0$ in Step 2) and the 95\% credible intervals from Step 2 to quantify uncertainty for the entries in $\mathbf{b}_j, j \in \mathcal{J}_n$. This procedure allowed us to calculate the CP for two-step Mt-MBSP.

It should be noted that most post-selection inference methods do \emph{not} provide inference for the variables that are not selected \citep{KuchibhotlaKolassaKuffner2022}. However, in our case, we can use the 95\% credible intervals from Step 1 to provide some measure of uncertainty for the variables that have been screened out in our two-step algorithm. By construction of the set \eqref{setJn}, all of the credible intervals for $b_{jk}, j \in \mathcal{J}_n^c$, contain zero. Although the estimator for $\mathbf{B}$ in Step 1 is not guaranteed to be consistent (Theorem \ref{thm_inconsistent}), Theorem \ref{thm_C} shows that with a well-chosen threshold $\gamma$ in \eqref{setAn}, we have $\mathcal{J}_n \subset \mathcal{S}_0$, or equivalently, $\mathcal{S}_0^c \subset \mathcal{J}_n^c$. As a result, the 95\% credible intervals for $\mathbf{b}_j, j \in \mathcal{J}_n^c$, obtained from Step 1 will typically still correctly contain zero and give correct inference in this respect.}

\begin{table}[!]
	\caption{{\pico Comparisons between the one-step method and two-step method averaged across 100 Monte-Carlo replicates. The empirical standard errors are provided in parentheses.}}
	\resizebox{.95\textwidth}{!}{\begin{tabular}{rcccccc} 
			\multicolumn{7}{c}{Scenario 1: continuous and binary ($q=4$)} \\[.05in]
			  & \multicolumn{3}{c}{One-step method}  & \multicolumn{3}{c}{Two-step method}  \\ 
     \cmidrule(lr){2-4} \cmidrule(lr){5-7}
			%\midrule 
			{\picotwo $(p,n,s_0)$}  & {$\picotwo (500,100,6)$} & {\picotwo $(1000,150,8)$} & {\picotwo $(2000,200,10)$} & {\picotwo $(500,100,6)$} & {\picotwo $(1000,150,8)$} & {\picotwo $(2000,200,10)$} \\ 
          \cmidrule(lr){2-4} \cmidrule(lr){5-7}
			%\midrule
			{rMSE} & {\footnotesize {\picotwo 0.068~(0.013)}} & {\footnotesize {\picotwo 0.057~(0.009)}} & {\footnotesize {\picotwo 0.046~(0.005)}} & {\footnotesize {\picotwo 0.034~(0.011)}} & {\footnotesize {\picotwo 0.025~(0.007)}} & {\footnotesize {\picotwo 0.017~(0.004)}} \\
                {$\text{rMSE}(S_0)$} & {\footnotesize {\picotwo 0.649~(0.097)}} & {\footnotesize {\picotwo 0.630~(0.072)}} & {\footnotesize {\picotwo 0.601~(0.116)}} & {\footnotesize {\picotwo 0.293~(0.089)}} & {\footnotesize {\picotwo 0.263~(0.075)}} & {\footnotesize {\picotwo 0.232~(0.062)}} \\
			{CP}  & {\footnotesize {\picotwo 0.999~(0.001)}} & {\footnotesize {\picotwo 0.999~(0.000)}} & {\footnotesize {\picotwo 0.999~(0.000)}} & {\footnotesize {\picotwo 0.999~(0.000)}} & {\footnotesize {\picotwo 0.999~(0.000)}} & {\footnotesize {\picotwo 0.999~(0.000)}} \\ 
			{Sens}  &{\footnotesize {\picotwo 0.829~(0.128)}} &  {\footnotesize {\picotwo 0.850~(0.094)}} & {\footnotesize {\picotwo 0.727~(0.104)}} & {\footnotesize {\picotwo 0.919~(0.076)}} & {\footnotesize {\picotwo  0.943~(0.057)}} & {\footnotesize {\picotwo 0.948~(0.054)}} \\  
			{Spec} & {\footnotesize {\picotwo 1.000~(0.000)}} & {\footnotesize {\picotwo 1.000~(0.000)}} & {\footnotesize {\picotwo 1.000~(0.000)}} & {\footnotesize {\picotwo 1.000~(0.000)}} & {\footnotesize {\picotwo 1.000~(0.00)}} & {\footnotesize {\picotwo 1.000~(0.000)}} \\ 
			{MCC}& {\footnotesize {\picotwo 0.904~(0.076)}} & {\footnotesize {\picotwo 0.918~(0.053)}} & {\footnotesize {\picotwo 0.849~(0.063)}} & {\footnotesize {\picotwo 0.947~(0.046)}} & {\footnotesize {\picotwo 0.960~(0.037)}} & {\footnotesize {\picotwo 0.966~(0.029)}}  \\
        \cmidrule(lr){2-4} \cmidrule(lr){5-7}
			% \bottomrule
	\end{tabular}}\\[.1in]
	\resizebox{.95\textwidth}{!}{\begin{tabular}{rcccccc} 
			\multicolumn{7}{c}{Scenario 2: continuous, binary, and count ($q=6$)} \\[.05in]
			&\multicolumn{3}{c}{One-step method}&\multicolumn{3}{c}{Two-step method}  \\ 
        \cmidrule(lr){2-4} \cmidrule(lr){5-7}
			%\midrule 
				{\picotwo $(p,n,s_0)$}  & {$\picotwo (500,100,6)$} & {\picotwo $(1000,150,8)$} & {\picotwo $(2000,200,10)$} & {\picotwo $(500,100,6)$} & {\picotwo $(1000,150,8)$} & {\picotwo $(2000,200,10)$} \\ 
        \cmidrule(lr){2-4} \cmidrule(lr){5-7}
			%\midrule
			{rMSE}   & {\footnotesize {\picotwo 0.030~(0.008)}}  & {\footnotesize {\picotwo 0.024~(0.005)}} & {\footnotesize {\picotwo 0.021~(0.004)}} & {\footnotesize {\picotwo 0.024~(0.007)}} & {\footnotesize {\picotwo 0.017~(0.005)}} & {\footnotesize {\picotwo 0.011~(0.003)}} \\
                  {$\text{rMSE}(S_0)$} & {\footnotesize {\picotwo 0.315~(0.058)}} & {\footnotesize {\picotwo 0.284~(0.076)}} & {\footnotesize {\picotwo 0.281~(0.058)}} & {\footnotesize {\picotwo 0.224~(0.062)}} & {\footnotesize {\picotwo 0.197~(0.060)}} & {\footnotesize {\picotwo  0.167~(0.038)}} \\
			{CP}  & {\footnotesize {\picotwo 0.999~(0.002)}} & {\footnotesize {\picotwo 0.999~(0.000)}} & {\footnotesize {\picotwo 0.999~(0.000)}} & {\footnotesize {\picotwo 0.999~(0.000)}} & {\footnotesize {\picotwo 0.999~(0.000)}} & {\footnotesize {\picotwo 0.999~(0.000)}} \\ 
			{Sens} &{\footnotesize {\picotwo 0.908~(0.082)}} & {\footnotesize {\picotwo 0.942~(0.054)}} & {\footnotesize {\picotwo 0.905~(0.083)}} & {\footnotesize {\picotwo 0.933~(0.067)}} & {\footnotesize {\picotwo 0.966~(0.035)}} & {\footnotesize {\picotwo 0.980~(0.025)}} \\  
			{Spec} & {\footnotesize {\picotwo 1.000~(0.000)}} & {\footnotesize {\picotwo 1.000~(0.000)}} & {\footnotesize {\picotwo 1.000~(0.000)}} & {\footnotesize {\picotwo 1.000~(0.000)}} & {\footnotesize {\picotwo 1.000~(0.000)}} & {\footnotesize {\picotwo 1.000~(0.000)}} \\ 
			{MCC}&  {\footnotesize {\picotwo 0.949~(0.046)}} & {\footnotesize {\picotwo 0.967~(0.028)}} & {\footnotesize {\picotwo 0.947~(0.044)}} & {\footnotesize {\picotwo 0.960~(0.037)}} & {\footnotesize {\picotwo 0.978~(0.020)}} & {\footnotesize {\picotwo 0.984~(0.015)}} \\
           \cmidrule(lr){2-4} \cmidrule(lr){5-7}
			%  \bottomrule
	\end{tabular}}\\[.1in]
	\resizebox{.95\textwidth}{!}{\begin{tabular}{rcccccc}
			\multicolumn{7}{c}{Scenario 3: count and continuous ($q=5$)}\\[.05in]
			&\multicolumn{3}{c}{One-step method} &\multicolumn{3}{c}{Two-step method}  \\ 
           \cmidrule(lr){2-4} \cmidrule(lr){5-7}
			%\midrule 
		{\picotwo $(p,n,s_0)$}  & {$\picotwo (500,100,6)$} & {\picotwo $(1000,150,8)$} & {\picotwo $(2000,200,10)$} & {\picotwo $(500,100,6)$} & {\picotwo $(1000,150,8)$} & {\picotwo $(2000,200,10)$} \\ 
              \cmidrule(lr){2-4} \cmidrule(lr){5-7}
			%\midrule
			{rMSE}   & {\footnotesize {\picotwo 0.010~(0.002)}} & {\footnotesize {\picotwo 0.006~(0.001)}} & {\footnotesize {\picotwo 0.005~(0.001)}} & {\footnotesize {\picotwo 0.009~(0.002)}} & {\footnotesize {\picotwo 0.006~(0.001)}} & {\footnotesize {\picotwo 0.004~(0.001)}} \\
                  {$\text{rMSE}(S_0)$} & {\footnotesize {\picotwo 0.100~(0.018)}} & {\footnotesize {\picotwo 0.074~(0.011)}} & {\footnotesize {\picotwo 0.069~(0.010)}} & {\footnotesize {\picotwo 0.094~(0.018)}} & {\footnotesize {\picotwo 0.073~(0.009)}} & {\footnotesize {\picotwo 0.066~(0.009)}} \\
			{CP}  & {\footnotesize {\picotwo 0.999~(0.000)}} & {\footnotesize {\picotwo 0.999~(0.000)}} & {\footnotesize {\picotwo 1.000~(0.000)}} & {\footnotesize {\picotwo 0.999~(0.000)}} & {\footnotesize {\picotwo 0.999~(0.000)}} & {\footnotesize {\picotwo 0.999~(0.000)}} \\ 
			{Sens} & {\footnotesize {\picotwo 0.975~(0.052)}} & {\footnotesize {\picotwo 0.990~(0.046)}} & {\footnotesize {\picotwo 0.961~(0.057)}} & {\footnotesize {\picotwo 0.992~(0.024)}} & {\footnotesize {\picotwo 0.999~(0.003)}} & {\footnotesize {\picotwo 1.000~(0.000)}} \\  
			{Spec} & {\footnotesize {\picotwo 1.000~(0.000)}} & {\footnotesize {\picotwo 1.000~(0.000)}} & {\footnotesize {\picotwo 1.000~(0.000)}} & {\footnotesize {\picotwo 1.000~(0.000)}} & {\footnotesize {\picotwo 1.000~(0.000)}} & {\footnotesize {\picotwo 1.000~(0.000)}} \\ 
			{MCC} & {\footnotesize {\picotwo 0.987~(0.027)}} & {\footnotesize {\picotwo 0.994~(0.026)}} & {\footnotesize {\picotwo 0.980~(0.030)}} & {\footnotesize {\picotwo 0.995~(0.014)}} & {\footnotesize {\picotwo 0.999~(0.004)}} & {\footnotesize {\picotwo 0.999~(0.004)}} \\
           \cmidrule(lr){2-4} \cmidrule(lr){5-7}
			%   \bottomrule
	\end{tabular}}\\[.1in]
\resizebox{.95\textwidth}{!}{\begin{tabular}{rcccccc} 
		\multicolumn{7}{c}{Scenario 4: all binary ($q=4$)}\\[.05in]
		&\multicolumn{3}{c}{One-step method} & \multicolumn{3}{c}{Two-step method} \\ 
          \cmidrule(lr){2-4} \cmidrule(lr){5-7}
		%\midrule 
	{\picotwo $(p,n,s_0)$}  & {$\picotwo (500,100,6)$} & {\picotwo $(1000,150,8)$} & {\picotwo $(2000,200,10)$} & {\picotwo $(500,100,6)$} & {\picotwo $(1000,150,8)$} & {\picotwo $(2000,200,10)$} \\ 
          \cmidrule(lr){2-4} \cmidrule(lr){5-7}
		%\midrule
		{rMSE}  & {\footnotesize {\picotwo 0.150~(0.018)}} & {\footnotesize {\picotwo 0.109~(0.010)}} & {\footnotesize {\picotwo 0.073~(0.006)}} & {\footnotesize {\picotwo 0.095~(0.023)}} & {\footnotesize {\picotwo 0.050~(0.013)}} &
        {\footnotesize {\picotwo 0.028~(0.006)}} \\
        {$\text{rMSE}(S_0)$} & {\footnotesize {\picotwo 1.302~(0.176)}} & {\footnotesize {\picotwo 1.202~(0.114)}} & {\footnotesize {\picotwo 1.025~(0.082)}} & {\footnotesize {\picotwo 0.697~(0.179)}} & {\footnotesize {\picotwo 0.491~(0.113)}} & {\footnotesize {\picotwo 0.386~(0.080)}} \\
		{CP}   & {\footnotesize {\picotwo 0.998~(0.001)}} & {\footnotesize {\picotwo 0.999~(0.000)}} & {\footnotesize {\picotwo 0.999~(0.000)}} & {\footnotesize {\picotwo 0.998~(0.001)}} & {\footnotesize {\picotwo 0.999~(0.001)}} & {\footnotesize {\picotwo 0.999~(0.000)}} \\ 
		{Sens} & {\footnotesize {\picotwo 0.583~(0150)}} & {\footnotesize {\picotwo 0.636~(0.125)}} & {\footnotesize {\picotwo 0.510~(0.115)}} & {\footnotesize {\picotwo 0.822~(0.115)}} & {\footnotesize {\picotwo 0.877~(0.089)}} & {\footnotesize {\picotwo 0.881~(0.076)}} \\  
		{Spec}  & {\footnotesize {\picotwo  0.999~(0.000)}} & {\footnotesize {\picotwo 1.000~(0.000)}} & {\footnotesize {\picotwo 1.000~(0.000)}} & {\footnotesize {\picotwo 0.999~(0.001)}} & {\footnotesize {\picotwo 1.000~(0.000)}} & {\footnotesize {\picotwo 1.000~(0.000)}} \\ 
		{MCC}&  {\footnotesize {\picotwo 0.752~(0.107)}} & {\footnotesize {\picotwo 0.790~(0.085)}} & {\footnotesize {\picotwo 0.709~(0.085)}} & {\footnotesize {\picotwo 0.843~(0.096)}} & {\footnotesize {\picotwo 0.898~(0.063)}} & {\footnotesize {\picotwo 0.919~(0.042)}} \\
          \cmidrule(lr){2-4} \cmidrule(lr){5-7}
		% \bottomrule
\end{tabular}}\\[.1in]
\resizebox{.95\textwidth}{!}{\begin{tabular}{rcccccc}
		\multicolumn{7}{c}{Scenario 5: binary and count ($q=5$)}\\[.05in]
		&\multicolumn{3}{c}{One-step method} & \multicolumn{3}{c}{Two-step method}   \\ 
          \cmidrule(lr){2-4} \cmidrule(lr){5-7}
		%\midrule 
	{\picotwo $(p,n,s_0)$}  & {$\picotwo (500,100,6)$} & {\picotwo $(1000,150,8)$} & {\picotwo $(2000,200,10)$} & {\picotwo $(500,100,6)$} & {\picotwo $(1000,150,8)$} & {\picotwo $(2000,200,10)$} \\ 
          \cmidrule(lr){2-4} \cmidrule(lr){5-7}
		%\midrule
		{rMSE}   & {\footnotesize {\picotwo 0.062~(0.012)}} & {\footnotesize {\picotwo 0.055~(0.007)}} & {\footnotesize {\picotwo 0.047~(0.005)}} &
        {\footnotesize {\picotwo 0.033~(0.009)}} &
  {\footnotesize {\picotwo 0.023~(0.005)}} & {\footnotesize {\picotwo 0.017~(0.003)}} \\
                 {$\text{rMSE}(S_0)$} & {\footnotesize {\picotwo 0.705~(0.091)}} & {\footnotesize {\picotwo 0.640~(0.088)}} & {\footnotesize {\picotwo 0.594~(0.132)}} &
                 {\footnotesize {\picotwo 0.307~(0.086)}} &
                 {\footnotesize {\picotwo 0.262~(0.060)}} & {\footnotesize {\picotwo 0.253~(0.054)}} \\
		{CP}   & {\footnotesize {\picotwo 0.999~(0.001)}} & {\footnotesize {\picotwo 0.999~(0.000)}} & {\footnotesize {\picotwo 0.999~(0.001)}} & {\footnotesize {\picotwo 0.999~(0.000)}} & {\footnotesize {\picotwo 0.999~(0.000)}} & {\footnotesize {\picotwo 0.999~(0.000)}} \\ 
		{Sens}  & {\footnotesize {\picotwo 0.777~(0.124)}} & {\footnotesize {\picotwo 0.803~(0.125)}} & {\footnotesize {\picotwo 0.658~(0.115)}} & {\footnotesize {\picotwo 0.880~(0.094)}} & {\footnotesize {\picotwo 0.938~(0.057)}} & {\footnotesize {\picotwo 0.920~(0.078)}} \\  
		{Spec} & {\footnotesize {\picotwo 1.000~(0.000)}} & {\footnotesize {\picotwo 1.000~(0.000)}} & {\footnotesize {\picotwo 1.000~(0.000)}} & {\footnotesize {\picotwo 1.000~(0.001)}} & {\footnotesize {\picotwo 1.000~(0.000)}} & {\footnotesize {\picotwo 1.000~(0.000)}} \\ 
		{MCC}&  {\footnotesize {\picotwo 0.873~(0.075)}} & {\footnotesize {\picotwo 0.889~(0.075)}} & {\footnotesize {\picotwo 0.807~(0.070)}} & {\footnotesize {\picotwo 0.930~(0.053)}} & {\footnotesize {\picotwo 0.960~(0.034)}} & {\footnotesize {\picotwo 0.948~(0.046)}} \\
          \cmidrule(lr){2-4} \cmidrule(lr){5-7}
		%  \bottomrule
\end{tabular}}\\[.1in]
\resizebox{.95\textwidth}{!}{\begin{tabular}{rcccccc} 
		\multicolumn{7}{c}{Scenario 6: all count ($q=3$)}\\[.05in]
		&\multicolumn{3}{c}{One-step method} & \multicolumn{3}{c}{Two-step method}  \\ 
          \cmidrule(lr){2-4} \cmidrule(lr){5-7}
		{\picotwo $(p,n,s_0)$}  & {$\picotwo (500,100,6)$} & {\picotwo $(1000,150,8)$} & {\picotwo $(2000,200,10)$} & {\picotwo $(500,100,6)$} & {\picotwo $(1000,150,8)$} & {\picotwo $(2000,200,10)$} \\ 
          \cmidrule(lr){2-4} \cmidrule(lr){5-7}
		%\midrule
		{rMSE}   & {\footnotesize {\picotwo 0.009~(0.003)}} & {\footnotesize {\picotwo 0.006~(0.002)}} & {\footnotesize {\picotwo 0.005~(0.002)}} & 
        {\footnotesize {\picotwo 0.008~(0.004)}} & {\footnotesize {\picotwo 0.006~(0.004)}} & {\footnotesize {\picotwo 0.004~(0.002)}} \\
                 {$\text{rMSE}(S_0)$} & {\footnotesize {\picotwo 0.117~(0.073)}} & {\footnotesize {\picotwo 0.093~(0.032)}} & {\footnotesize {\picotwo 0.067~(0.023)}} & 
                 {\footnotesize {\picotwo 0.101~(0.035)}} & {\footnotesize {\picotwo 0.089~(0.045)}} & {\footnotesize {\picotwo 0.069~(0.024)}} \\
		{CP}   & {\footnotesize {\picotwo 1.000~(0.000)}} & {\footnotesize {\picotwo 1.000~(0.000)}} & {\footnotesize {\picotwo 1.000~(0.000)}} & {\footnotesize {\picotwo 1.000~(0.000)}} & {\footnotesize {\picotwo 1.000~(0.000)}} & {\footnotesize {\picotwo 1.000~(0.000)}} \\ 
		{Sens} & {\footnotesize {\picotwo 0.875~(0.214)}} & {\footnotesize {\picotwo 0.948~(0.101)}} & {\footnotesize {\picotwo 0.695~(0.188)}} & {\footnotesize {\picotwo 0.972~(0.082)}} & {\footnotesize {\picotwo 0.993~(0.030)}} & {\footnotesize {\picotwo 0.921~(0.115)}} \\  
		{Spec}  & {\footnotesize {\picotwo 1.000~(0.000)}} & {\footnotesize {\picotwo 1.000~(0.000)}} & {\footnotesize {\picotwo 1.000~(0.000)}} & {\footnotesize {\picotwo 1.000~(0.000)}} & {\footnotesize {\picotwo 1.000~(0.000)}} & {\footnotesize {\picotwo 1.000~(0.000)}} \\ 
		{MCC}&  {\footnotesize {\picotwo 0.932~(0.078)}} & {\footnotesize {\picotwo 0.972~(0.059)}} & {\footnotesize {\picotwo 0.825~(0.122)}} & {\footnotesize {\picotwo 0.985~(0.046)}} & {\footnotesize {\picotwo 0.996~(0.016)}} & {\footnotesize {\picotwo 0.956~(0.070)}} \\
          \cmidrule(lr){2-4} \cmidrule(lr){5-7}
		%  \bottomrule
\end{tabular}}  \label{table:results}
\end{table}

Table \ref{table:results} reports our results averaged across the 100 replications for the different scenarios. {\pico In all of the simulation settings, we see that as {\picotwo $p$ and $n$} increased with $p$ growing faster than $n$, the average rMSE \emph{decreased} for both the one-step estimator and the two-step estimator. This provides numerical evidence of the posterior contraction results in Theorems \ref{one-step_contraction_rate} and \ref{thm_D2}.} Table \ref{table:results} also shows that the two-step method consistently had a lower average overall rMSE than the one-step method, especially in higher dimensions when {\picotwo $p=2000$}. {\pico This corroborates Theorem \ref{thm_D2}, which implies that when $p > n$, the two-step Mt-MBSP estimator should have a \emph{lower} estimation error than the one-step estimator.} {\pico Furthermore, Table \ref{table:results} also indicates that not only was the average overall rMSE lower for two-step Mt-MBSP, but so was the average rMSE($S_0$). This demonstrates that the two-step estimator's improvement over one-step Mt-MBSP could \emph{not} be solely attributed to the fact that the one-step estimator was not exactly sparse. Instead, two-step Mt-MBSP \emph{also} estimated the true nonzero entries in $\mathbf{B}_0$ with greater accuracy than one-step Mt-MBSP.}

{\picotwo Table \ref{table:results} also shows that the variable selection performance of the two-step estimator was consistently better than that of the one-step estimator, with higher average sensitivity and higher average MCC for all combinations of $(p, n, s_0)$. This is because the one-step thresholding rule \eqref{a0} tended to be more conservative and selected fewer variables, leading to more false negatives for one-step Mt-MBSP.} This {\picotwo provides numerical} evidence of Theorem \ref{thm_C}, which proved that two-step Mt-MBSP has the sure screening property with a well-chosen threshold $\gamma$ in \eqref{setAn}. Since Step 1 of two-step Mt-MBSP typically resulted in a set of variables $\mathcal{J}_n$ that properly contained the true set of non-null variables $S_0$, the two-step approach was {\emph less} likely to misclassify true signals than one-step Mt-MBSP. Overall, our results suggest that adding the variable screening step in our two-step approach is useful for improving the final estimation and variable selection performance.

Scenario 6 (all counts) is a particularly interesting case study. {\picotwo In this scenario, most} of the true nonzero signals in $\mathbf{B}_0$ were relatively weak. Table \ref{table:results} shows that the sensitivity {\picotwo and MCC were} considerably higher for two-step Mt-MBSP than for one-step Mt-MBSP {\picotwo in Scenario 6 when $p=2000$}. This suggests another advantage of our two-step approach. It appears as though two-step Mt-MBSP is able to select \emph{more} weak signals which are erroneously classified as null signals by the selection rule \eqref{a0} for one-step Mt-MBSP. By choosing an initial candidate set \eqref{setJn} with \emph{more} variables whose posterior credible intervals actually contain zero in Step 1, the two-step model is then able to detect these weak signals in Step 2.

{\pico \begin{remark}
    Theorem \ref{thm_inconsistent} implies that the one-step Mt-MBSP estimator is inconsistent when $p$ grows exponentially fast with $n$. We found it challenging to construct an artificial simulation demonstrating this posterior inconsistency. Our simulations in this section still corroborate our theoretical findings that when $p \gg n$, the two-step Mt-MBSP method improves upon the one-step Mt-MBSP method through lower estimation error and better variable selection. 
\end{remark}}

\subsection{Application with small $p$: Chronic kidney disease data}
\label{applications1}

We used the chronic kidney disease (CKD) dataset analyzed in \cite{EbiaredohSEM2022} to evaluate the proposed method. 
CKD is a disease in which the kidneys are unable to filter blood properly. As a result, excess fluids and waste in the blood remain in the body and may lead to other health problems such as heart disease and stroke. 
This dataset contains medical records from $n=400$ patients, 250 of whom had CKD. There are $24$ independent variables (11 numerical and 13 nominal) and a class variable (CKD status). These 24 variables are described in the Appendix \ref{S3:RealData}.

We studied whether the specific gravity (SG) (a continuous outcome) and the CKD status (a binary outcome) jointly vary with the other 23 attributes. SG is the concentration of all chemical particles in urine and is commonly used to quantify kidney function. We modeled the SG as a Gaussian response $y_1$ and the CKD status as a binary response $y_2$ and fit the Mt-MBSP model with the other $p=23$ variables as covariates. 
Our method selected the following three variables as being significantly associated with the CKD status: albumin (AL), hemoglobin (HEMO), and diabetes mellitus (DM) and only HEMO is significant for the SG. The study by \cite{EbiaredohSEM2022} also ranked AL and HEMO as the top two features. 

\begin{figure}[t!]
	\centering
	\includegraphics[scale=0.45]{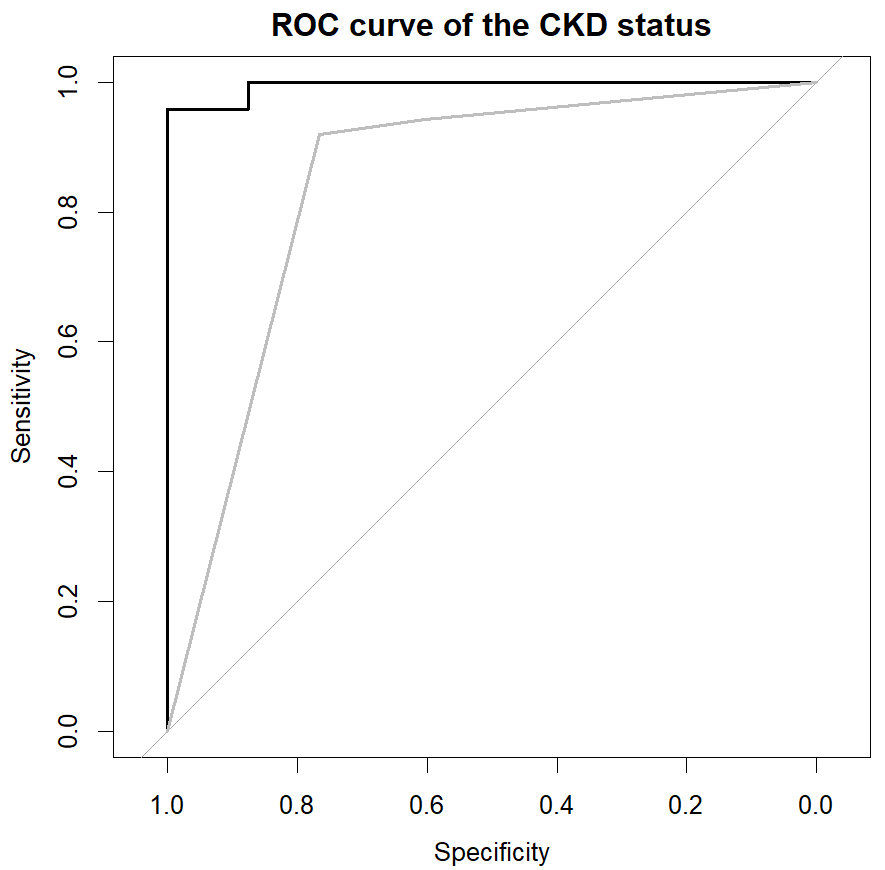} 
	\caption{Plots of the five-fold average ROC curves using the proposed method (black) and the marginal logistic model (gray).}
	\label{fig:CKD_ROCs}
\end{figure}

Using five-fold cross-validation (CV), we compared the results from Mt-MBSP to the results from fitting two separate marginal models,
\begin{equation*}
	y_1 = \bm{\beta}^\top_1 \mathbf{x} +\bm{\varepsilon}\mbox{~~and~~}
	P(y_2=y \mid x ) =\frac{\exp\{y(\bm{\beta}^\top_2 \mathbf{x} )\}}{1+\exp(\bm{\beta}^\top_2 \mathbf{x})},~y\in\{0,1\}.
\end{equation*}
We evaluated the average five-fold CV rMSE for the SG ($y_1$) and average five-fold CV area under the receiver operator characteristic (ROC) curve (AUC) for the CKD status ($y_2$). For $y_1$, the rMSE for the Mt-MBSP model ($0.876$)  was lower than the rMSE for the marginal linear regression model ($0.974$). For $y_2$, Mt-MBSP also had a much higher out-of-sample AUC ($0.995$) than the AUC for the marginal logistic model ($0.731$), indicating far better discriminative ability (see Figure \ref{fig:CKD_ROCs}). More detailed results from our analysis are in Appendix \ref{S3:RealData}. {\pico This example shows that the Mt-MBSP method performs well when $p<n$.}

\subsection{Application with very large $p$: Carcinomas data} \label{applications2}

{\pico Carcinoma is a type of cancer that originates in cells that make up the skin or  tissue lining organs. Correctly classifying different carcinomas based on their primary anatomical site (e.g. prostate, liver, etc.) is an important problem in medicine, since this allows clinicians to formulate optimal treatment plans for cancer patients \cite{su2001molecular}. In this section, we analyze the carcinomas dataset (U95a GeneChip) in \cite{su2001molecular}. This dataset contains a total of $n=174$ samples from 11 different carcinoma types: prostate, bladder/ureter, breast, colorectal, gastroesophagus, kidney, liver, ovary, pancreas, lung adenocarcinomas, and lung squamous cell carcinoma. The distribution of the $n$ samples across these 11 respective categories is as follows: 26, 8, 26, 23, 12, 11, 7, 27, 6, 14, and 14.  Collectively, these carcinomas account for about 70\% of all cancer-related deaths in the United States \cite{su2001molecular}.

In addition to the carcinoma type, each sample in our dataset contains $p=9183$ gene expression levels, with a maximum hybridization intensity of $\leq 200$ in at least one sample. All hybridization intensity values below 20 were adjusted to 20, followed by log transformation of the data. Our objectives are to: 1) classify carcinoma type based on gene expression levels, and 2) identify genes that are significantly associated with the different types of carcinoma.   

We applied the proposed two-step Mt-MBSP method to this carcinomas dataset with $q=11$ binary responses (one for each category of carcinoma) and all $p=9183$ genes. Thus, we needed to estimate a total of $pq = 101{,}002$ unknown regression coefficients. Through experimentation, we found that tuning the threshold $\gamma$ in the two-step algorithm from $\{ 0.010, 0.011, \ldots, 0.020 \}$ yielded the best results. We ran the two-step algorithm for 2000 MCMC iterations in each stage, discarding the first 1000 samples as burnin. To ensure that we had run the Gibbs sampler for enough iterations, we used the \textsf{R} package \texttt{mcmcse} \cite{mcmcseRpackage} to estimate the effective sample size (ESS) and Monte Carlo standard error (MCSE) for the posterior median of all $100{,}002$ entries in the $p \times q$ regression coefficients matrix $\mathbf{B}$. Step 1 had an average ESS of 789.94 and an average MCSE of 0.0063 across all entries of $\mathbf{B}$, while Step 2 had an average ESS of 580.37 and average MCSE of 0.0043. These MCMC diagnostics were reassuring.

\begin{table}[t!]
\caption{{\pico Results from our carcinomas data analysis. Carcinomas of the bladder/ureter, kidney, and pancreas are not shown because none of the genes was found to be associated with them. }}
\begin{center}
\footnotesize
\begin{tabular}{ l | l}
\hline 
   Carcinoma & Significant genes \\
   \hline 
  prostate & 3951, 7777 \\
  breast & 2062, 3951, 7777 \\
  colorectal & 2062, 7605 \\
 gastroesophagus & 5166 \\
 liver & 1893 \\
 ovary & 2516, 5166, 7777 \\
 lung adenocarcinomas  & 1848 \\
 lung squamous & 3951, 5166 \\ \hline
\end{tabular}
\end{center}
\label{carcinoma:genes}
\end{table}

Our method identified eight significant genes enumerated as 1848, 1893, 2062, 2516, 3951, 5166, 7065, and 7777 in \cite{su2001molecular}. Interestingly, Mt-MBSP found that each of these genes was only significantly associated with some but not all of the carcinomas. For example, gene 7777 was found to be significantly associated with  cancers of the prostate, breast, and ovary but not with the other eight carcinomas. In addition, carcinomas of the bladder/ureter, kidney, or pancreas were not found to be significantly associated with any of the 9183 genes. Table~\ref{carcinoma:genes} lists the results for the eight carcinomas that were found to be significantly associated with at least one of the genes in the dataset.

We also examined the ROC curves and AUC for the eight types of carcinomas exhibiting significant regression coefficients. Our results are shown in  Figure~\ref{fig:carcin_ROCs} and demonstrate exceptional classification performance. For comparison, \cite{cai2007selecting} utilized SVMs and had to identify 10 significant genes for \emph{each} class (or 110 total genes) in order to achieve comparable classification performance as Mt-MBSP. Despite choosing a rather sparse model (eight out of 9183 genes), our method still managed to achieve high classification accuracy. } 
% The average log-likelihood plot in Fig.~\ref{fig:carcin_loglike} shows that the MCMC iterations converge. 

\begin{figure}[t!]
\centering
\includegraphics[scale=0.57]{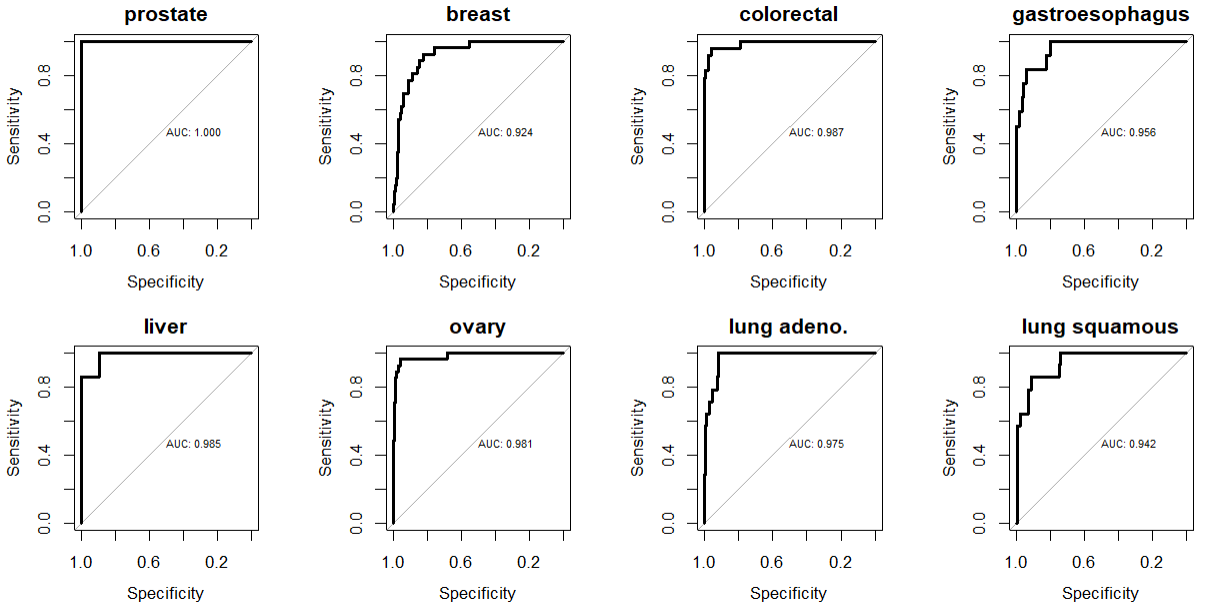} 
\caption{{\pico The ROC curves show that our method classifies eight types of carcinomas very well. In the figure, ``lung adenocarcinomas'' is abbreviated as ``lung adeno.'' Carcinomas of the bladder/ureter, kidney, and pancreas are not shown because none of the genes in the dataset was found to be associated with them.}}
\label{fig:carcin_ROCs}
\end{figure}

% \begin{figure}[t!]
% \centering
% \includegraphics[scale=0.4]{fig/Ave_joint_log_like.png} 
% \caption{The average log-likelihood with burn-in 2000, thin 10 and 12000 MCMC % iterations using the sampled regression coefficients of the carcinoma data.}
%\label{fig:carcin_loglike}
% \end{figure}

\section{Discussion} \label{Discussion}

We have introduced the Mt-MBSP method for joint modeling of mixed-type (i.e. continuous and discrete) responses. Our method accommodates correlations between the mixed responses through latent random effects. By employing a P\'{o}lya-gamma data augmentation approach \citep{polson2013bayesian} for the discrete responses, Mt-MBSP can be implemented with an exact Gibbs sampling algorithm where all the full conditionals are available in closed form. GL priors are placed on the regression coefficients to model sparsity and facilitate variable selection. 

Until now, theoretical results for Bayesian \textit{mixed-type} multivariate regression models have been unavailable. We have taken a step towards resolving this gap in the literature. We first derived the posterior contraction rate for the one-step Mt-MBSP model (i.e. the model fit to all $p$ variables without variable screening). We have also shown that subexponential growth of $p$ with $n$ is \textit{necessary} for posterior consistency of the one-step method. To overcome this limitation, we introduced a novel two-step algorithm that screens out a large number of variables in the first step. We established the sure screening property of our two-stage approach and the posterior contraction rate of the two-step estimator when $p$ is allowed to grow \textit{exponentially} with $n$. The two-step estimator has a sharper contraction rate than the one-step estimator if $p$ diverges faster than $n$, making it especially suitable for analysis of mixed-typed responses when $p>n$. 
The asymptotic regime of $\log p = O(n^{\alpha}), \alpha \geq 1$, has attracted theoretical interest \citep{LahiriAOS2021}, in part due to the abundance of high-throughput biological and genomic data. However, this regime has not been studied before in the Bayesian framework. Our paper makes an advancement in this important direction.

{\pico In this article, we have not considered situations where the response dimension $q$ could be large or divergent with $n$. Therefore, the choice of an inverse-Wishart prior \eqref{IWpriorSigma} for the covariance matrix $\bm{\Sigma}$ is adequate. However, the ``large $q$'' scenario also arises in practice, for example, in microbiome differential abundance analysis where there are large number of environmental outcomes \citep{chen2013variable,banerjee2022adaptive}. In these scenarios, we may need to impose additional sparsity assumptions on the precision matrix $\bm{\Sigma}^{-1}$ \citep{deshpande2019simultaneous, LiDattaCraigBhadra2021} or perform \textit{response} variable selection \citep{an2017simultaneous,kharesu2022}, in addition to predictor variable selection. The joint mean–covariance estimation procedure of \cite{LiDattaCraigBhadra2021} would be especially beneficial to use in the ``large $q$'' regime, because this method preserves conditional conjugacy. We will investigate the interesting problem of ``large $p$ \emph{and} large $q$'' methodologically and theoretically in future work.} We also plan to examine other important theoretical issues such as the asymptotic coverage of the posterior credible sets or a Bernstein von-Mises type theorem for the shape of the posterior in ultra high-dimensional settings. The present article intends to act as a springboard for future theoretical developments in Bayesian mixed-response models.

{\picotwo Our methodology and theoretical results assume row sparsity in the regression coefficients matrix $\mathbf{B}$. Although this is a common assumption in the literature, it may not hold in practice. For example, many of the $p$ rows could be non-null but each nonzero row only contains a few nonzero entries. In this scenario, if $s_0 > n$ (i.e. the number of nonzero rows is large), then we do not expect that the Mt-MBSP method will perform well. To accommodate more general sparsity structures, we can replace the $q$-variate priors \eqref{Mt-MBSP} on the rows of $\mathbf{B}$ with \emph{univariate} GL priors on the \emph{individual} regression coefficients $b_{jk}$ as
\begin{align} \label{univariateGL}
    b_{jk} \mid \xi_{jk} \sim \mathcal{N} (0, \tau \xi_{jk}),~~ \xi_{jk} \sim p(\xi_{jk}),~~ j = 1, \ldots, p,~ k = 1, \ldots, q,
\end{align}
similarly as \cite{LiDattaCraigBhadra2021, deshpande2019simultaneous}. Under modified assumptions, e.g. $\sum_{j=1}^{p} s_{0j} = o(n / \log p)$ where $s_{0j}$ is the number of non-null elements in the $j$th row of $\mathbf{B}_0$, Theorem \ref{one-step_contraction_rate} can be suitably modified to obtain the posterior contraction rate of $[ (\log p) \sum_{j=1}^{p} s_{0j} ] /n$. Likewise, we can modify the two-step algorithm in Section \ref{twostep} so that Step 1 first chooses the set $\mathcal{A}_n$ as
	\begin{equation*} 
		{\cal A}_n= \left\{~
		(j, k) \mid\mbox{
			$q_{0.025}(b_{jk}) > -\gamma$~~or~~ 
			$q_{0.975}(b_{jk}) <\,\gamma$
		}
		\right\},
\end{equation*}
and then chooses the $K_n = \min \{ n-1, |\mathcal{A}_n| \}$ variables with the largest magnitude posterior medians $|q_{0.5}(b_{jk})|$ as the final candidate set $\mathcal{J}_n$. We could then still obtain a sure screening property in a modified version of Theorem \ref{thm_C}. Finally, by modifying Theorem \ref{thm_D2}, the posterior contraction rate for this modified two-step estimator would then be $[ (\log K_n) \sum_{j=1}^{p} s_{0j} ]/n$. We anticipate that this extension to Mt-MBSP would work well under more general sparsity structures that may not entail row sparsity. This should be explored in future work.} 

\section*{Acknowledgments}

{\pico We are grateful to the Associate Editor and two anonymous reviewers whose thoughtful feedback helped us greatly improve our paper.}

\appendix

\renewcommand{\thesection}{\Alph{section}}
\renewcommand{\theproposition}{\thesection.\arabic{proposition}}
\renewcommand{\thetheorem}{\thesection.\arabic{theorem}}
\renewcommand{\thelemma}{\thesection.\arabic{lemma}}
\renewcommand{\theequation}{\thesection.\arabic{equation}}
\renewcommand{\thetable}{\thesection.\arabic{table}}	

		\setcounter{table}{0}

\section{One-step Gibbs sampler for Mt-MBSP} \label{S1:Gibbs}

\setcounter{equation}{0}

{\picotwo Let} $\mathbf{Z}_n=(\mathbf{z}_1,\dots,\mathbf{z}_n)^\top$ and $\mathbf{W}_n=(\bm{\omega}_1,\dots,\bm{\omega}_n)^\top$ {\picotwo denote} $n\times q$ augmented data matrices with rows $\mathbf{z}_i$ and $\bm{\omega}_i$, $i = 1, \ldots, n$, where $z_{ik}$ and $\omega_{ik}, k = 1, \ldots, q$, are defined as in \eqref{p4}, and $\mathbf{X}_n = (\mathbf{x}_1, \ldots, \mathbf{x}_n)^\top$ is the $n \times p$ design matrix. 
Let $\mathbf{U}_n=(\mathbf{u}_1,\dots, \mathbf{u}_n)^\top$ be the $n \times q$ matrix where the $\mathbf{u}_i$'s are the random effect vectors defined in \eqref{blatent}. 
{\picotwo Since $\mathbf{Y}_n$ is determined by $\mathbf{Z}_n$ and $\mathbf{W}_n$, the joint density of $(\mathbf{Y}_n, \mathbf{Z}_n, \mathbf{W}_n, \mathbf{U}_n, \mathbf{B}, \bm{\Sigma})$ is the same as the joint density for $(\mathbf{Z}_n, \mathbf{W}_n, \mathbf{U}_n, \mathbf{B}, \bm{\Sigma})$.}

%From \eqref{pbeta3}, we have that %$\mathbf{Z}_n \mid \mathbf{W}_n, %\mathbf{U}_n, \mathbf{B}, \bm{\Sigma}$ is %proportional to
%\begin{equation*}%
	%\exp\left(\sum^n_{i=1}\left\{
	%-\frac{1}{2} (\mathbf{z}_i -%\mathbf{B}^\top \mathbf{x}_i-%\mathbf{u}_i)^\top
%	\bm{\Omega}_i (\mathbf{z}_i - %\mathbf{B}^\top \mathbf{x}_i-\mathbf{u}_i)
%	- \frac{1}{2} \mathbf{u}_i^\top %\bm{\Sigma}^{-1} \mathbf{u}_i
%	\right\}\right).
%\end{equation*}
Let $\bm{\beta}_k$ denote the $k$th column of $\mathbf{B}$ so that $\mathbf{B} = (\bm{\beta}_1, \ldots, \bm{\beta}_q)$. Meanwhile, let $\bm{\nu} = (\nu_1, \ldots, \nu_p)^\top$ and $\bm{\eta} = (\eta_1, \ldots, \eta_p)^\top$, and let ${\cal GIG}(a,b,c)$ denote the generalized inverse Gaussian distribution with density function $f(x ; a, b, c) \propto x^{c-1}e^{-(a/x+bx)/2}$.
Assuming the prior \eqref{Mt-MBSP}-\eqref{global_local_prior} for $\mathbf{B}$, the TPBN prior \eqref{TPBN} for the local scale parameters $p(\xi_j)$'s in \eqref{global_local_prior}, and the inverse-Wishart prior \eqref{IWpriorSigma} for $\bm{\Sigma}$, we have the following Gibbs sampling algorithm for Mt-MBSP.

\begin{enumerate}
	\item Initialize $\mathbf{B}, \mathbf{W}_n, \mathbf{U}_n, \bm{\nu}, \bm{\eta}, \bm{\Sigma}$.
	\item Repeat for a large number of iterations:
	\begin{enumerate}[label=\roman*.]
		\item Sample $\mathbf{B} \mid \mathbf{Z}_n, \mathbf{W}_n, \mathbf{U}_n, \bm{\nu}, \bm{\eta}, \bm{\Sigma}$ as 
		\begin{equation*}
			\bm{\beta}_k \sim {\cal N}_{p}\left(
			\bm{\Delta}_k^{-1} 
			\sum^n_{i=1} (z_{ik} - u_{ik}) \omega_{ik} \mathbf{x}_i,~\bm{\Delta}_k^{-1}
			\right),~~k=1, \ldots, q,
		\end{equation*}
		where
		\begin{equation*}
			\bm{\Delta}_k = {\rm diag}(1/\nu_1,\dots,1/\nu_p)+\sum^n_{i=1}
			\omega_{ik} \mathbf{x}_i \mathbf{x}_i^\top.
		\end{equation*}
		If $p>n$, then the fast sampling method of \cite{bhattacharya2016fast} is used to sample from the full conditionals for the columns of $\mathbf{B}$ in $\mathcal{O}(n^2p)$ rather than in $\mathcal{O}(p^3)$ time.
		\item Sample $\mathbf{U}_n \mid \mathbf{Z}_n, \mathbf{W}_n, \mathbf{B}, \bm{\nu}, \bm{\eta}, \bm{\Sigma}$ as
		\begin{equation*}
			\mathbf{u}_i \sim {\cal N}_{q}\left( \bm{\Psi}_i^{-1} \bm{\Omega}_i \left( \mathbf{z}_i - \mathbf{B}^\top \mathbf{x}_i \right), \bm{\Psi}_i^{-1} \right),~~i = 1, \ldots, n,
		\end{equation*}
		where 
		\begin{equation*}
			\bm{\Psi}_i = \bm{\Omega}_i + \bm{\Sigma}^{-1}.
		\end{equation*}
		\item Sample $\mathbf{W}_n \mid \mathbf{Z}_n, \mathbf{U}_n, \mathbf{B}, \bm{\nu}, \bm{\eta}, \bm{\Sigma}$ by sampling the $(i,k)$th entry $\omega_{ik}$ as
		\begin{equation*}
			\omega_{ik}=
			\left\{\begin{array}{ll}
				1 &\mbox{if $y_{ik}$ is continuous as in \eqref{gaussianresponse},}\\[.1in]
				{\cal PG}\left( f_{ik}^{(2)},~ (\mathbf{X}_n \mathbf{B}+\mathbf{U}_n)_{ik}\right)&
				\mbox{if $y_{ik}$ is discrete as in \eqref{polson_model},}
			\end{array}\right.
		\end{equation*}
		where $f_{ik}^{(2)}$ is defined as in \eqref{polson_model}.
        \item Update each $(i,k)$th entry in $\mathbf{Z}_n$, i.e. $z_{ik}$, as in \eqref{p4}.
		\item Sample $\bm{\nu} \mid \mathbf{Z}_n, \mathbf{W}_n, \mathbf{U}_n, \mathbf{B}, \bm{\eta}, \bm{\Sigma}$ as 
		\[ \nu_j \sim {\cal GIG}(\| \mathbf{b}_j \|^2, 2\eta_j,u-q/2),~~~j = 1,\dots,p.
		\]
		\item Sample $\bm{\eta} \mid \mathbf{Z}_n, \mathbf{W}_n, \mathbf{U}_n, \mathbf{B}, \bm{\nu}, \bm{\Sigma}$ as 
		\[ \eta_j \sim
		{\cal G}(a,~\tau+\nu_j),~~~j = 1, \ldots, p.
		\]
		\item
		Sample $\bm{\Sigma} \mid \mathbf{Z}_n, \mathbf{W}_n, \mathbf{U}_n, \mathbf{B}, \bm{\eta}, \bm{\nu}$ 
		\[ 
		\bm{\Sigma} \sim {\cal IW}(n+d_1,~\mathbf{U}_n^\top \mathbf{U}_n+d_2 \mathbf{I}_q).
		\]
	\end{enumerate}
\end{enumerate}

\section{Additional details and results for the chronic kidney disease (CKD) application} \label{S3:RealData}

Table~\ref{CKD24var} lists all 25 variables in the chronic kidney disease (CKD) application from Section \ref{applications1}. The two response variables that we considered were specific gravity (a continuous outcome abbreviated as SG) and CKD status (a binary outcome for ``yes'' or ``no''). The other 23 variables were used as predictors in the Mt-MBSP model. 

In Table \ref{tab:ckd}, we provide the posterior median estimates for the parameter $\mathbf{B} = (\bm{\beta}_1, \bm{\beta}_2)$ under the Mt-MBSP model. Here, $\bm{\beta}_1$ denotes the parameters corresponding to SG ($y_1$), while $\bm{\beta}_2$ denotes the parameters corresponding to CKD status ($y_2$).

\section{Proofs} \label{S4:Proofs}

\setcounter{equation}{0}

Here, we give the proofs of all the theoretical results from Sections \ref{theory} and \ref{twostep}. Propositions \ref{PropositionA1} and \ref{PropositionA2} are suitably adapted from Lemmas 1 and 2 of an unpublished technical report \citep{MBSPcorrigendum} written by the same authors of this manuscript. 

{\picotwo Throughout this section, $\mathbf{Z}_n=(\mathbf{z}_1,\dots,\mathbf{z}_n)^\top$ and $\mathbf{W}_n=(\bm{\omega}_1,\dots,\bm{\omega}_n)^\top$ denote the $n\times q$ matrices with rows $\mathbf{z}_i$ and $\bm{\omega}_i$, $i = 1, \ldots, n$, where $z_{ik}$ and $\omega_{ik}, k = 1, \ldots, q$, are defined as in \eqref{p4}.}

\begin{table}[t!] 
	\centering
	\caption{Estimated regression coefficients $\mathbf{B} = (\bm{\beta}_1, \bm{\beta}_2)$ in the Mt-MBSP model fit to the CKD dataset. Here, $\bm{\beta}_1$ is the vector of parameters associated with the Gaussian response $y_1$ (SG), while $\bm{\beta}_2$ is the vector of parameters associated with the binary response $y_2$ (CKD status).}~~\\
		\begin{tabular}{ccc|ccc}
			%\hline
			Variables & $\bm{\beta}_1$
			& $\bm{\beta}_2$ & Variables &$\bm{\beta}_1$ 
			& $\bm{\beta}_2$  \\[.1in]
			%\hline
                BGR&~0.0000&-0.0000&RBC&0.0000& ~0.0000\\
                BU&-0.0000&~0.0000&SU &{0.0000}&~0.0000\\
                SOD&~0.0001&{ -0.0002}&PC  &~0.0000&-0.0000\\
                SC &{~-0.0017}&{-0.0097} &PCC  &0.0000&~0.0000\\
                POT&~0.0006&{~-0.0017} &BA &0.0000&-0.0000\\
                PCV&-0.0011&{~0.0009}&HEMO&{~0.7794}&{-3.2464}\\
                RBCC  &{~0.0019}&{-0.0050}&DM&{-0.2162}&{~3.1619}\\
                GRF&~0.0000&~0.0000&CAD&~0.0000&~0.0000\\
			AGE &-0.0001&~0.0001 &APPET&~0.0000& ~0.0000\\
			BP  &~0.0000&~0.0000 &PE&-0.0000& ~0.0000\\
                HTN&~0.0000&~0.0000&ANE&~0.0000& ~0.0000\\
			AL  &{-0.1641}&{1.5406}&&&
			%\hline
	\end{tabular} \label{tab:ckd}
\end{table}

\begin{table}[t!]
	\caption{The 25 variables in the CKD dataset analyzed in Section \ref{applications1}. In our analysis, we used a continuous variable specific gravity (SG) and a binary variable CKD status (CKD) as the response variables for our analysis. The other 23 covariates were used as the covariates.}
	\vspace{.1in}
	\centering
		\begin{tabular}{ll}
			\textbf{Variable (type)} & \textbf{Abbreviation (description)}\\
			\hline
			Age(numerical)&
			AGE in years\\
			Blood Pressure(numerical)&
			BP in mm/Hg\\
			Specific Gravity(nominal)&
			SG $\in \{1.005,1.010,1.015,1.020,1.025\}$\\
			Albumin(nominal)&
			AL $\in \{0,1,2,3,4,5\}$\\
			Sugar(nominal)&
			SU $\in \{0,1,2,3,4,5\}$\\
			Red Blood Cells(nominal)&
			RBC $\in$ \{normal, abnormal\}\\
			Pus Cell (nominal)&
			PC $\in$ \{normal, abnormal\}\\
			Pus Cell clumps(nominal)&
			PCC $\in$ \{present,not present\}\\
			Bacteria(nominal)&
			BA $\in$ \{present, not present\}\\
			Blood Glucose Random(numerical)&
			BGR in mgs/dl\\
			Blood Urea(numerical)&
			BU in mgs/dl\\
			Serum Creatinine (numerical)&
			SC in mgs/dl\\
			Sodium (numerical)&
			SOD in mEq/L\\
			Potassium (numerical)&
			POT in mEq/L\\
			Hemoglobin (numerical)&
			HEMO in gms\\
			Packed Cell Volume (numerical)&PCV $\in [0,1]$\\
			Glomerular Filtration Rate (numerical)&
			  GFR in ml/min\\
			Red Blood Cell Count (numerical)&
			RC in millions/cmm\\
			Hypertension (nominal)&
			HTN $\in$ \{yes, no\}\\
			Diabetes Mellitus (nominal)&
			DM $\in$ \{yes, no\}\\
			Coronary Artery Disease (nominal)&
			CAD $\in$ \{yes, no\}\\
			Appetite(nominal)&
			APPET $\in$ \{good, poor\}\\
			Pedal Edema(nominal)&
			PE $\in$ \{(yes, no\}\\
			Anemia(nominal)&
			ANE $\in$ \{yes, no\}\\
			CKD status (nominal)&
			CKD $\in$ \{yes, no\}\\
			\hline
	\end{tabular}
	\label{CKD24var}
\end{table}

\subsection{Preliminary Lemmas and Propositions}

\begin{proposition} 
	Assume that a positive measurable function $L(x)$ is slowly varying, i.e. for each fixed $a>0$, $L(ax) / L(x) \rightarrow 1$ as $x \rightarrow \infty$. Then for each $\varepsilon \in (0, 1)$ and $a > 1$, there exists $x_0$ such that
	\begin{align*}
		\widetilde{k}_1 x^{\log(1-\varepsilon) / \log(a)} \leq L(x) \leq \widetilde{k}_2 x^{\log(1+\varepsilon)/\log(a)} 
	\end{align*}
	whenever $x > x_0$, for some constants $\widetilde{k}_1, \widetilde{k}_2 > 0$.\label{PropositionA1}
\end{proposition}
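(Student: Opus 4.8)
The plan is to exploit the defining property of slow variation for the \emph{fixed} pair $(a,\varepsilon)$ and then to ``telescope'' the ratio $L(ax)/L(x)$ along a geometric grid with common ratio $a$. First I would apply the hypothesis $L(ax)/L(x)\to 1$ to produce a threshold $x_1$ such that
\[
1-\varepsilon \;\le\; \frac{L(ax)}{L(x)} \;\le\; 1+\varepsilon \qquad \text{for all } x\ge x_1 .
\]
This is the only place where slow variation is used; everything afterward is bookkeeping on the grid $\{a^{m}x_1\}_{m\ge 0}$.

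Next, for an arbitrary $x\ge x_1$ I would write $x=a^{m}t$ with $m=\lfloor \log(x/x_1)/\log a\rfloor\ge 0$ and $t=x\,a^{-m}\in[x_1,a x_1)$. Telescoping the identity $L(x)=L(t)\prod_{j=0}^{m-1} L(a^{j+1}t)/L(a^{j}t)$ and applying the displayed ratio bound to each factor (which is legitimate because $a^{j}t\ge t\ge x_1$) yields
\[
(1-\varepsilon)^{m}\, L(t) \;\le\; L(x) \;\le\; (1+\varepsilon)^{m}\, L(t) .
\]
The remaining algebraic step converts $(1\pm\varepsilon)^{m}$ into a power of $x$. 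Using $m\le \log(x/x_1)/\log a$ together with $\log(1+\varepsilon)>0$ gives $(1+\varepsilon)^{m}\le (x/x_1)^{\log(1+\varepsilon)/\log a}$; using the same upper bound on $m$ together with $\log(1-\varepsilon)<0$ (so that $t\mapsto(1-\varepsilon)^{t}$ is decreasing) gives $(1-\varepsilon)^{m}\ge (x/x_1)^{\log(1-\varepsilon)/\log a}$. I would then bound $L(t)$ above and below by positive constants $c_2,c_1$ over the compact range $t\in[x_1,ax_1]$, and absorb the factors $x_1^{-\log(1\pm\varepsilon)/\log a}$ together with $c_1,c_2$ into the constants $\widetilde{k}_1,\widetilde{k}_2$. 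This delivers exactly the claimed two-sided bound for all $x>x_0:=x_1$.

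The main obstacle is the control of the ``boundary'' factor $L(t)$ on $[x_1,ax_1]$: I must know that $L$ is bounded above and bounded \emph{away from zero} on this compact interval. If $L$ is continuous, as it is for every prior listed in Table \ref{table:priors}, this is immediate from positivity of $L$ and the extreme value theorem. For a merely measurable slowly varying $L$, it requires invoking the standard fact from Karamata's theory that a measurable slowly varying function is locally bounded on $[X,\infty)$ for $X$ sufficiently large, equivalently the Karamata representation $L(x)=c(x)\exp\{\int_{X}^{x}\epsilon(u)/u\,du\}$ with $c(x)\to c\in(0,\infty)$ and $\epsilon(u)\to 0$; I would cite this rather than reprove it. Every other ingredient is elementary manipulation of the geometric grid, so the proof is short once this local boundedness is secured.
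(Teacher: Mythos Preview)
Your proposal is correct and follows essentially the same telescoping idea as the paper: fix a threshold past which $1-\varepsilon\le L(ax)/L(x)\le 1+\varepsilon$, iterate along the geometric grid $\{a^{k}\}$, and convert $(1\pm\varepsilon)^{k}$ into powers of $x$ via $k=\log(x/x_1)/\log a$. Your version is in fact more careful than the paper's, which only establishes the bound at the discrete points $x=a^{k}u_0$ and tacitly extends to all $x$; your decomposition $x=a^{m}t$ with $t\in[x_1,ax_1)$ and the appeal to local boundedness of $L$ on that compact interval fill in precisely the step the paper omits.
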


\begin{proof}[Proof of Proposition \ref{PropositionA1}]
	By the definition of a slowly varying function, for each $\varepsilon \in (0,1)$ and $a > 1$, there exists $u_0$ so that $\lvert L(au) / L(u) - 1 \rvert < \varepsilon$ whenever $u > u_0$. Thus, we have
	\begin{align*}
		L(u_0)(1- \varepsilon) \leq L(au_0) \leq L(u_0) (1+\varepsilon).
	\end{align*}
	By induction, for all $k \in \mathbb{N}$,
	\begin{equation} \label{slowlyvarying}
		L(u_0) (1-\varepsilon)^k \leq L(a^k u_0) \leq L(u_0) (1+\varepsilon)^k.
	\end{equation}
	Note that $a^k u_0 > u_0$ for all $k \in \mathbb{N}$ since $a > 1$. Take $x = a^k u_0$ so that $k = \log(x/u_0) / \log(a)$. We can then rewrite \eqref{slowlyvarying} as
	\begin{align*}
		L(u_0)(1-\varepsilon)^{\log(x/u_0)/\log(a)} \leq L(x) \leq L(u_0)(1+\varepsilon)^{\log(x/u_0)/\log(a)},
	\end{align*}
	and thus,
	\begin{align*}
		L(u_0) \left( \frac{x}{u_0} \right)^{\frac{\log(1-\varepsilon)}{\log(a)}} \leq L(x) \leq L(u_0)  \left( \frac{x}{u_0} \right)^{\frac{\log(1+\varepsilon)}{\log(a)}}.
	\end{align*}
	~
\end{proof}

\begin{proposition} 
	Suppose that $\xi$ has the hyperprior density $p(\xi)$ in \eqref{global_local_prior}, i.e. $p(\xi) = K \xi^{-a-1} L(\xi)$, where $L(\cdot)$ is slowly varying. {\pico Let  $a_n =O(n^t)$ and $\log\,b_n \asymp n^{s}$ be increasing sequences of $n$, where $t$ and $s$ are finite positive numbers.
 } Then
	{\pico \begin{align*}
		P ( \xi > a_n b_n) > \exp(-C\, \log b_n ),
	\end{align*}}
	for some finite $C > 0$.\label{PropositionA2}
\end{proposition}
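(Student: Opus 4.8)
The plan is to bound the tail probability from below by integrating the hyperprior density $p(\xi)=K\xi^{-a-1}L(\xi)$ over $(\nu_n p_n n^{\rho},\infty)$, using Proposition~\ref{PropositionA1} to replace the slowly varying factor $L$ by a genuine polynomial lower bound. The only nontrivial ingredient is controlling $L$; once that is done the argument reduces to the elementary estimate of a polynomial tail integral together with bookkeeping of constants.

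\medskip

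First I would fix any $\varepsilon\in(0,1)$ and any scale factor $b>1$ (this $b$ plays the role of the ``$a$'' appearing in Proposition~\ref{PropositionA1} and is distinct from the tail index $a$). Proposition~\ref{PropositionA1} then yields $x_0>0$ and $\widetilde{k}_1>0$ such that
\[
L(\xi)\ \geq\ \widetilde{k}_1\,\xi^{\log(1-\varepsilon)/\log b}\ =:\ \widetilde{k}_1\,\xi^{-\delta},\qquad \xi>x_0,
\]
where $\delta:=-\log(1-\varepsilon)/\log b>0$. Since $\nu_n p_n n^{\rho}\to\infty$, we have $\nu_n p_n n^{\rho}>x_0$ for all large $n$, so that
\[
P\!\left(\xi>\nu_n p_n n^{\rho}\right)=\int_{\nu_n p_n n^{\rho}}^{\infty}K\xi^{-a-1}L(\xi)\,d\xi\ \geq\ K\widetilde{k}_1\int_{\nu_n p_n n^{\rho}}^{\infty}\xi^{-a-1-\delta}\,d\xi=\frac{K\widetilde{k}_1}{a+\delta}\,\bigl(\nu_n p_n n^{\rho}\bigr)^{-(a+\delta)} .
\]

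\medskip

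Next I would absorb the growth of $\nu_n$ and $n^{\rho}$ into powers of $p_n$. Using $\nu_n=O(n^{s})$, write $\nu_n p_n n^{\rho}\leq c'\,p_n n^{s+\rho}$ for a constant $c'$ and large $n$; since the proposition is applied in the regime $p_n\succ n$, we have $n\leq p_n$ eventually, whence $n^{s+\rho}\leq p_n^{s+\rho}$ and therefore $\bigl(\nu_n p_n n^{\rho}\bigr)^{-(a+\delta)}\geq (c')^{-(a+\delta)}p_n^{-(a+\delta)(1+s+\rho)}$. Combining with the previous display gives $P(\xi>\nu_n p_n n^{\rho})\geq c''\,p_n^{-(a+\delta)(1+s+\rho)}$ for a fixed constant $c''>0$. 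Because $p_n\to\infty$, we have $c''>p_n^{-1}$ for large $n$, so
\[
P\!\left(\xi>\nu_n p_n n^{\rho}\right)\ >\ p_n^{-\left[(a+\delta)(1+s+\rho)+1\right]}=\exp\!\left(-C\log p_n\right),
\]
with $C:=(a+\delta)(1+s+\rho)+1<\infty$, which is the claim.

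\medskip

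The main obstacle is the first step: translating the abstract slow variation of $L$ into a usable polynomial lower bound, which is precisely what the Potter-type estimate of Proposition~\ref{PropositionA1} supplies, so that the heavy tails of the prior are not destroyed by $L$. Everything after is routine. The one subtlety to record is that the Potter bound holds only beyond the threshold $x_0$; this is harmless because the lower integration limit $\nu_n p_n n^{\rho}$ diverges, so the bound is in force for all large $n$. I would also note the benign convention $n\leq p_n$ used only to express the final bound purely in terms of $\log p_n$.
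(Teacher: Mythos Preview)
Your proof is correct and follows essentially the same approach as the paper: both invoke Proposition~\ref{PropositionA1} to bound $L(\xi)$ below by a polynomial $\widetilde{k}_1\xi^{-\delta}$, integrate the resulting power tail explicitly, and then absorb the $\nu_n n^{\rho}$ factor into a power of $p_n$ via $n\leq p_n$. You are somewhat more careful than the paper in recording that the Potter-type bound only applies beyond a threshold $x_0$ and that the integration limit eventually exceeds it, and in making the dependence on $s$ and $\rho$ explicit in the final constant $C$.
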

{\pico
\begin{proof}[Proof of Proposition \ref{PropositionA2}]
	By Proposition \ref{PropositionA1}, there exists finite $b > 0$ and $K_1 > 0$ so that 
	\begin{align*}
		P (\xi > a_n b_n) & = \int_{a_n b_n}^{\infty} K u^{-a-1} L(u) du \\
		& \geq \int_{a_n b_n}^{\infty} K u^{-a-1} K_1 u^{-b} du \\
		& = \frac{KK_1}{a+b} (a_n b_n)^{-a-b} \\
		& = \exp \left\{ - (a+b) \log b_n - (a+b) \log a_n   + \log(KK_1 / (a+b)) \right\} \\
		& > \exp(-C\, \log b_n )
	\end{align*}
	for a sufficiently large $C > 0$. In the last line of the display, we used the fact that $a_n =O(n^t)$ and $\log b_n \asymp n^{s}$. 
\end{proof}
}

{\picotwo 
\begin{proposition}\label{prop:NB}
Assume $Y_1,\dots, Y_n$ are i.i.d random variables from a negative binomial distribution  ${\rm NB}(r,p)$ with PMF,
\begin{equation*}
P(Y=y\mid \theta,~r) = \frac{\Gamma(y+r)}{y!~\Gamma(r)}
	(1-p)^{y} p^r,
\end{equation*}
where $0<r<\infty$. Then for any $t>0$,
\[
P(\max_{1\leq i\leq n}Y_i > n^t,\mbox{~a.s.})=0 ~~{ as }~~ n \rightarrow \infty.
\]
\end{proposition}
\begin{proof}
Let $[x]$ be the integral part of $x \in \mathbb{R}$.  As $n$ is large, we obtain
\begin{footnotesize}
\begin{eqnarray}
P(\max_{1\leq i\leq n}Y_i > n^t)&\leq& 
n\,P(Y_1 > n^t)\notag\\
&=&n\sum^\infty_{\ell=[n^t]+1}
\frac{\Gamma(y+r)}{y!~\Gamma(r)}
(1-p)^\ell p^r\notag\\
&=&n\sum^\infty_{\ell=[n^t]+1}
\frac{
\sqrt{2\pi(\ell+r-1)}
\left(\frac{\ell+r-1}{e}\right)^{(\ell+r-1)}}{
\sqrt{2\pi\ell}
\left(\frac{\ell}{e}\right)^{\ell}
\sqrt{2\pi(r-1)}
\left(\frac{r-1}{e}\right)^{r-1}
}(1-p)^\ell p^r(1+o(1))\notag\\
&=&
\frac{1}{\sqrt{2\pi}}
e^{r-1}p^r
n\sum^\infty_{\ell=[n^t]+1}
\left(
\frac{\ell+r-1}{r-1}
\right)^{r-1}(1-p)^\ell(1+o(1))\notag\\
&\leq&
\frac{1}{\sqrt{2\pi}}
e^{r-1}p^r
\sum^\infty_{\ell=[n^t]+1}
(1-p)^{\ell/2}(1+o(1))\notag\\
&=&\frac{1}{\sqrt{2\pi}}
e^{r-1}p^{r-1}(1-p)^{([n^t]+1)/2}(1+o(1)).
\label{nb_tail}
\end{eqnarray}
\end{footnotesize}

\noindent The third line of \eqref{nb_tail} can be derived from Stirling's formula. For large enough $n$, we have 
$n(\ell+r-1/(r-1))^{r-1}\leq (1-p)^{\ell/2}$ for $\ell > n^t$. Thus, 
the fifth line of \eqref{nb_tail} holds. By \eqref{nb_tail}, 
$\sum^\infty_{n=1} P(\max_{1\leq i\leq n}Y_i > n^t)<\infty$, and thus the proof is completed by applying the Borel-Cantelli Lemma.
\end{proof}}
% {\picotwo From Proposition \ref{nb_tail}, we have 
% \begin{eqnarray}
% \max_{i=1,\dots,n}\kappa_i^2
% =\max_{i=1,\dots,n}(f^{(1)}_i-f^{(2)}_i/2)^2
% <n^{t} \label{c8}
% \end{eqnarray}
% for any $t>0$ for Bernoulli, binomial, multinomial, and negative binomial. }

To derive the asymptotic behavior for $p(\mathbf{B}_n \mid \mathbf{Z}_n, \mathbf{W}_n)$, we need to bound the eigenvalues of $\bm{\Omega}_i = \textrm{diag}(\omega_{i1}, \ldots, \omega_{iq})$ for each $i = 1, \ldots, n$. Without loss of generality, we prove Proposition \ref{pgW} for all discrete responses. Namely, we show that the eigenvalues of $\sum^n_{i=1}(\bm{\Sigma}+\bm{\Omega}_i^{-1})^{-1}/n$ are bounded away from zero and infinity when all of the outcomes in $\mathbf{y}_i$ are discrete. 
However, if there are continuous (Gaussian) responses in $\mathbf{y}_i$, then the assertion that the eigenvalues of $\sum^n_{i=1}(\bm{\Sigma}+\bm{\Omega}_i^{-1})^{-1}/n$ are bounded away from zero and infinity still holds. This is because $\omega_{ik} = 1$ for all $k \in \{1, \ldots, q \}$ where $y_{ik}$ is Gaussian. {\pico Let $S \subset \mathcal{I} = \{ 1, \ldots, p_n \}$ be a set of row indices for the regression coefficients matrix.}
\begin{proposition}\label{pgW}
Let $\omega_{ik},~i=1,\dots,n$ are i.i.d. ${\cal PG}(b_{1k},0)$, $b_{1k}>0$, $k\in\{1,\dots,q\}$. Then as $n\rightarrow\infty$, for any $v>0$, 
 $\min_{1\leq i\leq n}\omega_{ki}> n^{-v}$ ~a.s. 
\end{proposition}

\begin{proof}[Proof of proposition \ref{pgW}] 
Based on the definition of a
	P\'{o}lya-gamma random variable
	\citep{polson2013bayesian},  
	if $\omega\sim {\cal PG}(b,c)$, then 
	\begin{eqnarray*}
		\omega =
		\frac{1}{2\pi^2}
		\sum^\infty_{\ell=1}
		\frac{g_\ell}{\left(
			\ell-\frac12\right)^2+\left(\frac{c}{2\pi}\right)^2},~~ g_\ell \overset{ind.}{\sim} {\cal G}(b,1).
	\end{eqnarray*}
For any given $k\in\{1,\dots,q\}$, $\omega_{k1}\sim {\cal PG}(b_{1k},0)$, and for $b_{1k}>0$, there exists large $N\in \mathbb{N}$ so that $\mbox{$Nb_{1k}>1$}$ and 
$\mbox{$Nb_{1k}v>2$}$.  {\picotwo Then,
letting $g^*=\sum_{\ell=1}^N g_\ell,~~ g_\ell \overset{ind.}{\sim} {\cal G}(b_{1k},1)$, we have 
$g^*\sim {\cal G}(Nb_{1k},1)$ and 
\[
\frac{1}{2\pi^2}
		\sum^\infty_{\ell=1}
		\frac{g_\ell}{\left(
			\ell-1/2\right)^2}
   \geq 
   \frac{1}{2\pi^2}
		\sum^N_{\ell=1}
		\frac{g_\ell}{\left(
			 N-1/2\right)^2}
    =
       \frac{1}{2\pi^2\left(
			 N-1/2\right)^2}g^*.
\]
Thus, 
 \begin{eqnarray}
P\left(\min_{1\leq i\leq n}\omega_{ki}\leq n^{-v}\right)
&=& 1- P\left(\min_{1\leq i\leq n}\omega_{ki} > n^{-v}\right)\notag\\
&=&1- P\left(\omega_{k1} > n^{-v}\right)^n\notag\\
&=& 1- P\left(
\frac{1}{2\pi^2}
		\sum^\infty_{\ell=1}
		\frac{g_\ell}{\left(
			\ell-1/2\right)^2}
> n^{-v}\right)^n\notag\\
&\leq& 1- P\left(
\frac{1}{2\pi^2(N-1/2)^2}g^*
> n^{-v}\right)^n\notag\\
&=& 1- \left\{1-P\left(
g^*
\leq \frac{2\pi^2(N-1/2)^2}{n^{v}}\right)\right\}^n. \label{ww}
 \end{eqnarray}
Note that $g^*\sim {\cal G}(Nb_{1k},1)$ and 
$u^{Nb_{1k}-1}e^{-u}$ is an increasing function of $u$ if $\mbox{$Nb_{1k}>1$}$, $u\in(0,1)$. We have that for sufficiently large $n$, 
 \begin{align*}
& \left\{1-P\left(g^*\leq \frac{2\pi^2(N-1/2)^2}{n^{v}}\right)\right\}^n \\
& \qquad = \left\{1-\int^{2\pi^2(N-1/2)^2/n^{v}}_0
\frac{1}{\Gamma(Nb_{1k})}u^{Nb_{1k}-1}e^{-u}
du\right\}^n\notag\\
& \qquad > 
\left\{1-
\frac{1}{\Gamma(Nb_{1k})}
\left(\frac{2\pi^2(N-1/2)^2}{n^{v}}\right)^{Nb_{1k}}e^{-2\pi^2(N-1/2)^2/n^{v}}
\right\}^n\notag\\
& \qquad = 
1+O(n^{1-Nb_{1k}v}).
  \end{align*}
  }
  Using equation~\eqref{ww}, we have
  \[
P\left(\min_{1\leq i\leq n}\omega_{ki}\leq n^{-v}\right)=O(n^{1-Nb_{1k}v}).
\] 
Since $1-Nb_{1k}v<-1$, we have  $P\left(\min_{1\leq i\leq n}\omega_{ki}\leq n^{-v}\right)=o(n^{-1})$. Therefore, by the Borel-Cantelli Lemma, the proof is established.
\end{proof}

\begin{lemma}\label{lemmaC1}
{\pico
 Suppose that Assumptions (A3)-(A4) hold, so that the diagonal entries of $\bm{\Omega}_i$ follow ${\cal PG}(b_{ik},0)$, $b_{ik}>0$, $k=1,\dots,q$, and
	$\bm{\Sigma} \mid \mathbf{U}_n\sim {\cal IW}(n+d_2,\mathbf{U}_n^\top \mathbf{U}_n+d_1 \mathbf{I}_q)$. Then for a positive number ${k_2}\in [0,1)$ in Assumption (B1) {\picotwo and all $i \in \{1, \ldots, n\}$, we have that} as $n\rightarrow\infty$,
	\begin{align*}
	n^{-{k_2}}
\leq \inf_{|S|=o(n/\log p_n)}  \lambda_{\min}\left(
n^{-1}\sum^n_{i=1}\left\{(\bm{\Sigma}+\bm{\Omega}^{-1}_i)^{-1} \otimes \mathbf{x}^S_i (\mathbf{x}_i^S)^\top\right\}\right)  \mbox{~~a.s.}
	\end{align*} 
 and
 \begin{align*}
\sup_{|S|=o(n/\log p_n)}  \lambda_{\max}\left(
n^{-1}\sum^n_{i=1}\left\{(\bm{\Sigma}+\bm{\Omega}^{-1}_i)^{-1} \otimes \mathbf{x}^S_i (\mathbf{x}_i^S)^\top\right\}\right)\leq n^{{k_2}}  \mbox{~~a.s.}
	\end{align*} 
    	where {\picotwo $\mathbf{x}_i^S$ denotes the $p^S\times 1$ sub-vector of the $i$th row $\mathbf{x}_i$ of the design matrix $\mathbf{X}_n$ containing the entries with indices in $S$, while} $\lambda_{\min}(\mathbf{A})$ and $\lambda_{\max}(\mathbf{A})$ denote the smallest eigenvalue and the largest eigenvalue
     for a matrix $\mathbf{A}$ respectively.}
\end{lemma}

\begin{proof}[Proof of Lemma \ref{lemmaC1}]

 {\pico Recall that $\bm{\Omega}_i=\textrm{diag}(\omega_{1i},\dots,\omega_{qi}),~i=1,\dots,n$. By 
 Proposition \ref{pgW}, 
 we obtain that 
 for any $v>0$,
 \begin{eqnarray}
 \min_{1\leq i\leq n}\omega_{ki}> n^{-v} ~a.s. \mbox{~~~~for each~$k \in \{ 1,\dots,q \}$},
 \label{bbd}  
 \end{eqnarray}
  as $n\rightarrow\infty$. Thus, for each $k\in\{1,\dots,q\}$, we take 
$v_k=d$ so that $0\leq d+k_0<1$ and 
$\min_{1\leq i\leq n}w_{ki} > n^{-d}~a.s.$,  where $k_0$ and $d$ are defined  in Assumption (A3).  Then we have 
\begin{eqnarray*}
(\bm{\Sigma}+n^{d}\mathbf{I}_q)^{-1}\otimes\left(
n^{-1}\sum_{i=1} \mathbf{x}^S_i (\mathbf{x}_i^S)^\top
\right)
\leq 
n^{-1}\sum^n_{i=1}\left\{(\bm{\Sigma}+\bm{\Omega}^{-1}_i)^{-1} \otimes \mathbf{x}^S_i (\mathbf{x}_i^S)^\top\right\}~a.s.
\end{eqnarray*}
By Assumption (A3),  for each $S$ satisfying $|S|=o(n/\log p_n)$, 
\begin{eqnarray}
\varepsilon_n(\bm{\Sigma}+n^{d}\mathbf{I}_q)^{-1}\otimes \mathbf{I}_{|S|}
\leq 
n^{-1}\sum^n_{i=1}\left\{(\bm{\Sigma}+\bm{\Omega}^{-1}_i)^{-1} \otimes \mathbf{x}^S_i (\mathbf{x}_i^S)^\top\right\}~a.s.\notag\\
\label{ccc}
\end{eqnarray}
Now, it is enough to show that $\bm{\Sigma}$ is bounded. Using the fact that
	$\bm{\Sigma} \mid \mathbf{U}_n \sim {\cal IW}(n+d_2, \mathbf{U}_n^\top \mathbf{U}_n+d_1 \mathbf{I}_q)$, we have that as $n\rightarrow\infty$,  
	\begin{eqnarray}
		\frac{1}{n+d_2} (\mathbf{U}_n^\top \mathbf{U}_n +d_1\mathbf{I}_q) \longrightarrow \bm{\Sigma}_0,~a.s.\label{s2}
	\end{eqnarray}
On the other hand, it is implied by Theorem 1 in \citep{zhu2012}
that 
	\begin{eqnarray}
  \frac{
\lambda_\ell(\bm{\Sigma})}{
 \lambda_{\ell}[E(\bm{\Sigma} \mid \mathbf{U}_n)]}=
 \frac{
\lambda_\ell(\bm{\Sigma})}
{\lambda_\ell[(\mathbf{U}_n^\top\mathbf{U}_n+d_1\mathbf{I}_q)/(n+d_2)]}\rightarrow 1
,~~\ell=1,\dots,q,~~a.s.\label{s3}
	\end{eqnarray}
so that  $\bm{\Sigma}\rightarrow \bm{\Sigma}_0~a.s.$  as $n\rightarrow\infty$. By Assumption (A4), the eigenvalues of $\bm{\Sigma}$ are bounded away from zero almost surely. Thus, 
\begin{eqnarray*}
n^{-d-k_0}=\varepsilon_n n^{-d}
\leq \lambda_{\min}\left(
n^{-1}\sum^n_{i=1}\left\{(\bm{\Sigma}+\bm{\Omega}^{-1}_i)^{-1} \otimes \mathbf{x}^S_i (\mathbf{x}_i^S)^\top\right\}\right)~a.s.
\end{eqnarray*}
Take ${ k_2}=d+k_0$, where ${ k_2}\in [0,1)$.  
By Assumption (A3), we have that as $n\rightarrow \infty$,
\begin{eqnarray*}
\lambda_{\max}[(n^{-1}\sum^n_{i=1}
(\bm{\Sigma}+\bm{\Omega}^{-1}_i)^{-1} \otimes \mathbf{x}^S_i (\mathbf{x}_i^S)^\top]
&\leq& \lambda_{\max}[n^{-1}\sum^n_{i=1}
\bm{\Sigma}^{-1} \otimes \mathbf{x}^S_i (\mathbf{x}_i^S)^\top]\\
&\leq& k_1\varepsilon^{-1}_n
\leq n^{{k_2}}.
\end{eqnarray*} 
The proof is done.
 }
\end{proof}

In order to prove the main results in the paper, we will first characterize the asymptotic behavior of the Mt-MBSP model when $\log p_n = O(n^{\alpha}), \alpha \in (0, \infty)$. We need the following assumptions, which are looser than assumptions (A1)-(A2).

\begin{enumerate}[label=(S\arabic*)]
	\item $n\ll p_n$ and $\log p_n=O(n^\alpha)$, $\alpha \in(0,\infty)$. 
	\item Let $S_0 \subset \{1, \dots, p_n \}$ denote the set of indices of the rows in $\mathbf{B}_0$ with at least one nonzero entry. Then $|S_0|=s_0$ satisfies $1\leq s_0$ and $ s_0=o(n)$.
\end{enumerate}

We need to consider these more general assumptions (S1)-(S2) in order to prove Theorems \ref{thm_inconsistent} and \ref{thm_C}. To derive the main results, we also need to construct a suitable test function. For the test function given in Lemma \ref{pg_prop} below, we need to first introduce the following notation. For each $S$ satisfying that $|S|=o(n/\log p_n)$, define the set $\mathcal{M}$ as 
\begin{eqnarray}
	\mathcal{M}=\left\{ S:  S \supset S_0,~ S \neq S_0,~ |S|\leq m_n\right\}, \label{usingM}
\end{eqnarray}
where $S_0$ is the set of indices of the true model (i.e. the indices of the true nonzero rows in $\mathbf{B}_0$), and {$m_n$ is a positive number satisfying $s_0\leq m_n$ and $m_n \log p_n=o(n^{(1- k_2)}\delta^2_n)$}, where {\picotwo $k_2$ is defined as in Assumption (B1)
} and {$\delta_n^2 \in (s_0  n^{-1} \log p_n,~s_0 \log p_n)$.} Define the set $\mathcal{T}$ as 
\begin{eqnarray}
	\mathcal{T}=\left\{ S:  S \subset (\mathcal{I} \setminus \mathcal{M}),\,|S|\leq n\right\}.\label{usingT}
\end{eqnarray}
Let $p^S=|S|$ and $s_0=|S_0|$, and let 
$\mathbf{B}^S$ and $\mathbf{B}_0^{S}$ denote respectively the $p^S\times q$ submatrices of $\mathbf{B}_n$ and $\mathbf{B}_0$ containing the rows with indices in $S$. {\picotwo Let the maximum likelihood estimator (MLE) for $\mathbf{B}^S_0$ be denoted by $\widehat{\mathbf{B}}^S$.} For $\mathbf{B}^S \in {\cal M}$,  define the set,
%sets of ${\cal C}_n$: 
%and ${\cal E}_n$: 
\begin{eqnarray}
	{\cal C}_n = \bigcup_{S\in {\cal M}}
	\left\{
	\| \widehat{\mathbf{B}}^S 
	- \mathbf{B}^S_0
	\|_F >\varepsilon\delta_n/2
	\right\}.
	\label{a13}
\end{eqnarray}

\begin{lemma}\label{pg_prop}
	Assume that we have a mixed-type response model \eqref{gaussianresponse}-\eqref{polson_model} where the parameters $\bm{\theta}_{i0}$'s satisfy \eqref{blatent} with true parameters $(\mathbf{B}_0, \bm{\Sigma}_0)$. Suppose that Assumptions (S1)-(S2) and Assumptions (A3)-(A4) in Section \ref{onestepcontraction} hold. Suppose that we endow the covariance matrix $\bm{\Sigma}$ with the inverse-Wishart prior \eqref{IWpriorSigma}. Let { $\delta_n^2 \in (s_0  n^{-1} \log p_n,~s_0  \log p_n)$}. Define $\Phi_n=\mathrm{1}(\mathbf{Z}_n\in {\cal C}_n)$ and 
	${\cal B}_\varepsilon=\{\mathbf{B}_n~:~\| \mathbf{B}_n-\mathbf{B}_0\|>\varepsilon\delta_n\}$, where ${\cal C}_n$ is defined as in \eqref{a13}. Then, for any arbitrary $\varepsilon>0$,  there exists a finite number $C>0$ such that as $n\rightarrow \infty$, \\[.1in]
	{\rm (i)$~$}~${\rm E}_{\mathbf{B}_0}(\Phi_n)
	\leq\exp(-C\,n^{\pico (1-k_2)}\delta_n^2/2)$; \\[.1in]
	{\rm (ii)}~$\sup_{\mathbf{B}_n\in{\cal B}_\varepsilon}{\rm E}_{\mathbf{B}_n}(1-\Phi_n)
	\leq\exp(-C\,n^{\pico (1-k_2)}\delta_n^2)$,
 where $k_2$ is defined as in Assumption (B1).
\end{lemma}
\begin{proof}[Proof of Lemma \ref{pg_prop}]
  {\picotwo Before proving (i), we first establish a tail bound \eqref{eta_sqr} that is needed for our proof. In what follows, $\mathbf{x}_i^S$ denotes the $p^S\times 1$ sub-vector of the $i$th row $\mathbf{x}_i$ of the design matrix $\mathbf{X}_n$ containing the entries with indices in $S$.

Without loss of generality, assume the first $\widetilde{q}$ response variables $y_1, \ldots, y_{\widetilde{q}}$ are discrete, where $0 < \widetilde{q} < q$, and the remaining response variables $y_{\widetilde{q}+1}, …, y_q$ are continuous. If all $q$ responses are discrete or if all $q$ responses are continuous, then our results can still be shown to hold by suitably modifying \eqref{baselinefn} and noting that the upper bound in \eqref{dvt} continues to hold.
It is implied by  \eqref{wyz} and \eqref{blatent} that the joint density function 
$\mbox{$(\mathbf{z}_i,~\bm{\omega}_i,~\mathbf{u}_i)\mid \mathbf{x}_i$}$ is
\begin{align*}  \label{jointLemmaC1}
&p( \mathbf{z}_i,~\bm{\omega}_i,~\mathbf{u}_i \mid \mathbf{x}_i)\notag\\
&=\{(2\pi)^q {\rm det}(\bm \Sigma)\}^{-1/2}
    \exp\left\{-\frac12 \mathbf u_i^\top \mathbf \Sigma^{-1} \mathbf u_i\right\}
\notag\\
&~~~~\times
p_0(\mathbf z_i, \bm \omega_i) 
\exp\left\{-\frac12
  (\mathbf z_i-(\mathbf B^S_0)^\top \mathbf x_i-\mathbf u_i)^\top\mathbf \Omega_i
  (\mathbf z_i-(\mathbf B^S_0)^\top \mathbf x_i-\mathbf u_i)
  \right\}, \numbereqn
\end{align*}
where the baseline function  
$p_0(\mathbf z, \bm \omega)$ 
in \eqref{jointLemmaC1} is
\begin{footnotesize}
\begin{eqnarray} \label{baselinefn}
p_0(\mathbf z, \bm \omega)
&=&\prod_{\ell=1}^q \left\{1\left(
z_{\ell}={\kappa_{\ell}}/{\omega_{\ell}},~\ell=1,\dots,\tilde q
\right)2^{-f_{\ell}^{(2)}}
\exp\left(\kappa^2_{\ell}/\omega_{\ell} 
\right){\cal PG}
(\omega_\ell \mid f^{(2)}_\ell,~0)\right.\notag\\
&&\left.+
1\left(
\omega_{\ell}=1,~\ell=\tilde q+1,\dots, q
\right)(
2\pi \omega_{\ell}
)^{-1/2}\right\}.
\end{eqnarray}
\end{footnotesize}
Then 
  \begin{eqnarray}
    p(\mathbf z_i, \bm \omega_i\mid \mathbf x_i)
    &=&
     p_0(\mathbf z_i, \bm \omega_i)
\{{\rm det}(\bm\Omega_i+\bm \Sigma)\}^{-1/2}
\{{\rm det}(\bm\Omega_i)\}^{1/2}
\notag\\
&&~~\times 
  \exp\left\{-\frac12
  (\mathbf z_i-(\mathbf B^S_0)^\top \mathbf x_i^S)^\top(\mathbf \Omega_i+
  \mathbf \Sigma)^{-1}
  (\mathbf z_i-(\mathbf B^S_0)^\top \mathbf x_i^S)
  \right\}.\notag\\
  \label{nongaussian}
  \end{eqnarray}
From Proposition \ref{pgW}, we have 
$\max_{1\leq i\leq n}{\omega_{ki}}^{-1} < n^{v}$ ~a.s. for any $v>0$.  
Moreover, by Assumption (B4), $\max_{i=1,\dots,n}\kappa^2_{ki}<n^{t}$ for any $t>0$ as $n$ is large. 
Take $\tilde d=v+t$ so that 
\begin{eqnarray}
\max_{1\leq i\leq n} \max_{1\leq \ell\leq q}z^2_{\ell i}= 
\max_{1\leq i\leq n} \max_{1\leq \ell\leq q}\frac{\kappa^2_{\ell i}}{\omega^2_{\ell i}} < n^{\tilde d}
\mbox{~~and~~} 
m_nn^{2\tilde d}=o(n^{1-k_2}\delta_n^2)
\label{dvt}
\end{eqnarray}
Based on \eqref{nongaussian} and the fact that $\textrm{vec}((\mathbf{x}_i^S)^\top \mathbf{B}^S_0)=(
  \mathbf{I}_q \otimes (\mathbf{x}_i^S)^\top
  )
  \textrm{vec}(\mathbf{B}_0^S)
  $, 
  the score function for our model is 
  \begin{eqnarray}
  s(\mathbf{B}^S)= 
  \sum^n_{i=1}
  ((\mathbf \Omega_i+
  \mathbf \Sigma)^{-1}\otimes \mathbf{x}_i^S)(\mathbf{z}_i - 
  (\mathbf{I}_q\otimes (\mathbf{x}_i^S)^\top)\textrm{vec}(
  \mathbf B^{S})). \label{score}
  \end{eqnarray}
  Recall that $\widehat{\mathbf{B}}^S$ is the MLE for for $\mathbf{B}^S_0$.  
  Since $s(\widehat{\mathbf{B}}^S)=\bm 0$
  and $E_{\mathbf{B}_0^S}[s({\mathbf{B}}^S_0)]=\bm 0$, we have 
\begin{eqnarray*}
  \frac{\sqrt{n}}{n}
 s({\mathbf B}^S_0)&=&
  \frac{\sqrt{n}}{n}
 s({\mathbf B}^S_0)-
 E_{\mathbf B_0^S}[s({\mathbf B}^S_0)]\notag\\
 &=&
   \frac{\sqrt{n}}{n}\{
  s({\mathbf B}^S_0)
  - s(\widehat{\mathbf B}^S_0)
   \} \notag\\
   &=&
     \left(\sum^n_{i=1}
  (\mathbf \Omega_i+
  \mathbf \Sigma)^{-1}\otimes \mathbf x_i^S(\mathbf x_i^S)^\top\right) \left(\textrm{vec}(
  \widehat{\mathbf B}^{S})
  -\textrm{vec}(
  \mathbf B^{S}_0)
  \right)\notag\\
  &=& \sqrt{n}\widetilde{\bm{\Sigma}}^{-1}_n \left(\textrm{vec}(
  \widehat{\mathbf B}^{S})
  -\textrm{vec}(
  \mathbf B^{S}_0)
  \right), \label{sbs}
\end{eqnarray*}
where 
	\begin{eqnarray*}
	\widetilde{\bm{\Sigma}}_n= 
	\left( \frac1n \sum^n_{i=1} \left\{(\bm{\Sigma}+\bm{\Omega}^{-1}_i)^{-1}\otimes 
	\left(\mathbf{x}^S_i (\mathbf{x}_i^S)^\top \right)\right\}\right)^{-1}.
	\end{eqnarray*}
	Let $\tilde \lambda_\ell,$ $\ell=1,\dots,p^Sq$ be the $\ell$th eigenvalues of $\widetilde{\bm{\Sigma}}_n$.  Based on Lemma \ref{lemmaC1}, we have
 	\begin{eqnarray}
 n^{-{ k_2}}<\min_{\ell} \lambda_\ell(\widetilde{\bm{\Sigma}}_n^{-1}),~{ k_2}\in[0,1).\label{kc} 
 	\end{eqnarray}
Let $\bm \eta =
\widetilde{\bm{\Sigma}}^{1/2}_ns({\mathbf B}^S_0)/\sqrt{n}
$. Then $
E(\bm \eta)=\bm 0
\mbox{~~and~~}
{\rm var}(\bm \eta)=\mathbf{I}_{p^Sq}
$ and 
\begin{eqnarray}
 \left\|\textrm{vec}(
  \widehat{\mathbf B}^{S})
  -\textrm{vec}(
  \mathbf B^{S}_0)
  \right\|^2
  =\frac{1}{n}
  {\rm tr} \left(\widetilde{\bm{\Sigma}}_n {\bm \eta}
{\bm \eta}^\top \right).
  \label{sbs2}
 \end{eqnarray}
 Form \eqref{dvt}  and \eqref{score}, we have $\max_{1\leq j\leq m_n}|\eta_{j}|^2<n^{\tilde d}~a.s.$ by noting that $\bm \eta$ can be 
 regarded as a weighted linear combination of 
 $\{\mathbf{z}_i - 
  (\mathbf{I}_q\otimes (\mathbf{x}_i^S)^\top)\textrm{vec}(
  \mathbf B^{S})\}_{i=1}^n$. 
It follows from Hoeffding's inequality that 
 for any $t>0$,
\begin{eqnarray}
P(\|\bm \eta\|^2>t)
\leq \exp\left(
-C\frac{(t-E\|\bm\eta\|^2)^2}{m_n n^{2\tilde d}}
\right)
\leq \exp\left(
-C\frac{(t-m_n)^2}{m_n n^{2\tilde d}}
\right),\notag\\
\label{eta_sqr}
\end{eqnarray}
where $C>0$ is a constant.}

To prove (i), {\picotwo note that} 
	\begin{align*}
		& E_{\mathbf{B}_0}(\Phi_n)=
		P(\mathbf{Z}_n\in {\cal C}_n)
		\leq \sum_{S\in{\cal M}}
		P \left(
		\| \widehat{\mathbf{B}}^S 
		-\mathbf{B}^S_0
		\|_F >\varepsilon\delta_n/2
		\right)\notag\\
		& \qquad  \leq \sum^{m_n}_{|S|=s_0+1}
		\binom{p_n}{|S|}
		P \left(
		\| 
		\textrm{vec} (\widehat{\mathbf{B}}^S)- \textrm{vec} (\mathbf{B}^S_0)
		\|^2_F  > \varepsilon^2\delta_n^2/4
		\right)\notag\\
		&\qquad \leq \sum^{m_n}_{|S|=s_0+1}
		\binom{p_n}{|S|}
		P \left( 
		{\rm tr}\left( \widetilde{\bm{\Sigma}}_n \bm{\eta} \bm{\eta}^\top \right) > \varepsilon^2n\delta^2_n/4\right)\notag\\
  & \qquad \leq \sum^{m_n}_{|S|=s_0+1}
		\binom{p_n}{|S|}
		P \left( 
		n^{ k_2}{\rm tr}\left(\bm{\eta} \bm{\eta}^\top \right) > \varepsilon^2n\delta^2_n/4\right)\notag\\
		& \qquad \leq \sum^{m_n}_{p^S=s_0+1}
		\binom{p_n}{p^S}
		P \left( 
  {\picotwo \|\bm \eta\|^2 >}\varepsilon^2n^{ (1-{ k_2} )}\delta^2_n/4\right)\label{tail_chisqr}, \numbereqn
	\end{align*}
 {\picotwo
where the third inequality of the display holds by using \eqref{sbs2} and
 the fourth inequality holds by using \eqref{kc}. Using}
 the fact that $\sum^m_{i=k}\binom{n}{i}\leq (m-k+1)\binom{n}{m}$,  the right-hand-side of \eqref{tail_chisqr} can be bounded as follows. From \eqref{eta_sqr}, we take 
 $t=\varepsilon^2n^{ (1-{ k_2} )}\delta^2_n/4$ and 
 obtain that
  for sufficiently large $n$,
 \begin{align*} 	\label{eq24}
		& \sum^{m_n}_{p^S=s_0+1}
		\binom{p_n}{p^S}
\exp\left(
	-{\picotwo C}\frac{n^{(1- k_2)}\varepsilon^2\delta_n^2}{4}\right) \\
		& \qquad \leq 
		(m_n-s_0)\left(\frac{ep_n}{m_n}\right)^{m_n} 
		\exp\left(
		-{\picotwo C}\frac{n^{ (1- k_2)}\varepsilon^2\delta_n^2}{4}
		\right). \numbereqn
	\end{align*}
 Using the facts that {\pico $m_n\ll n^{\picotwo (1-k_2)}\delta^2_n/\log\,p_n$} {\picotwo and $ m_n n^{2\tilde d}=o(n^{1-k_2}\delta_n^2)$ in \eqref{dvt},} it is implied by
	\eqref{eq24} that
	\begin{align*} \label{LemmaA1finalinequality}
		& \log\left[
\sum^{m_n}_{p^S=s_0+1}
		\binom{p_n}{p^S}
p^Sq\,\exp\left(
	-{\picotwo C}\frac{n^{(1-k_2 )}\varepsilon^2\delta_n^2}{4}\right)
		\right] \\
		&\qquad \leq \log(m_n)
		+m_n(1+\log p_n)-{\picotwo C}
		\frac{1}{4}
		n^{ (1- k_2)}\varepsilon^2\delta^2_n. \numbereqn
	\end{align*}
	Altogether, combining \eqref{tail_chisqr}-\eqref{LemmaA1finalinequality},  we have
	\[
	E_{\mathbf{B}_0}(\Phi_n) \leq
	\exp\left(
	-Cn^{\pico (1-k_2)}\delta_n^2/2
	\right). 
	\]
	The proof of (i) is done. 
	The proof of (ii) follows almost identical arguments as the proof of Proposition 2 of \cite{BaiGhosh2018}, and thus, we omitted it here.
\end{proof}

  {\picotwo Next, we have Proposition
\ref{pg_thm}
} which provides an upper bound on the radius of the ball in which the posterior {\picotwo (conditionally on $\mathbf{Z}_n$ and $\mathbf{W}_n$)} asymptotically puts all of its mass when $\log p_n = O(n^{\alpha}), \alpha \in (0, \infty)$. 

\begin{proposition} \label{pg_thm}
	Assume the same setup as Lemma \ref{pg_prop} where Assumptions (S1)-(S2) and (A3)-(A4) hold. Suppose that we endow the regression coefficients matrix $\mathbf{B}_n$ with a prior $p(\mathbf{B}_n)$ satisfying 
 \begin{align}
 		P \left( \| \mathbf{B}_n-\mathbf{B}_0\|_F <  \widetilde{C} n^{-\rho/2}\delta_n
		\right)> \exp(-Dn^{\pico (1-k_2)}\delta_n^2)  \label{ssv}
	\end{align}
{\pico for any $(\tilde C, D, \rho)$ with $\tilde C>0$, $D>0$, and $\rho>k_2$. Moreover, $\bm{\Sigma}$ follows an inverse-Wishart prior \eqref{IWpriorSigma}.} Then for any arbitrary $\varepsilon>0$, 
	\begin{eqnarray*}
		\sup_{\mathbf{B}_0}  E_{\mathbf{B}_0} P \left(  
		\|\mathbf{B}_n-\mathbf{B}_0\|_F > \varepsilon\delta_n~\bigg|~\mathbf{Z}_n, \mathbf{W}_n
		\right)\longrightarrow  0\mbox{~as~} n\rightarrow \infty.
	\end{eqnarray*}  
\end{proposition}
\begin{proof}[Proof of Proposition \ref{pg_thm}]
	
	The proof is similar to the proof of \cite{BaiGhosh2018}. Based on Lemma \ref{pg_prop}, both the type I and type II errors 
	can control an exponential decay rate. Let $\Phi_n=1(\mathbf{Z}_n\in {\cal C}_n)$, where 
	${\cal C}_n$ is defined as in \eqref{a13}. Let 
	\[
	J_{ {\cal B}_\varepsilon}=\int_{\mathbf{B}_n\in{\cal B}_\varepsilon}
	\frac{p(\mathbf{Z}_n, {\picotwo \mathbf{W}_n} \mid \mathbf{B}_n)}{p(\mathbf{Z}_n, {\picotwo \mathbf{W}_n} \mid \mathbf{B}_0)}
	dP(\mathbf{B}_n)
	\mbox{~~and~~}
	J_n = \int
	\frac{p(\mathbf{Z}_n, {\picotwo \mathbf{W}_n} \mid \mathbf{B}_n)}{p(\mathbf{Z}_n, {\picotwo \mathbf{W}_n} \mid \mathbf{B}_0)}
	dP(\mathbf{B}_n).
	\]
	The posterior probability of ${\cal B}_\varepsilon=\{\| \mathbf{B}_n -\mathbf{B}_0 \|_F>\varepsilon\delta_n\}$, where $\varepsilon>0$, is given by 
	\[
	P({\cal B}_\varepsilon\mid \mathbf{Z}_n, {\picotwo \mathbf{W}_n})=
	\frac{J_{ {\cal B}_\varepsilon}
	}{J_n}\leq \Phi_n+(1-\Phi_n)
	\frac{J_{ {\cal B}_\varepsilon}
	}{J_n}\overset{def}{=}
	A_1+\frac{A_2}{J_n}.
	\]
	By using Markov's inequality and part (i) of Lemma \ref{pg_prop}, we have 
	\begin{align*}
		P(A_1\geq 
		\exp\{
		-C\,n^{\pico (1-k_2)}\delta^2_n/4
		\}) & \leq  \exp(C\,n^{\pico (1-k_2)}\delta^2_n/4) E_{\mathbf{B}_0}(\Phi_n) \\
		& \leq \exp(-C\,n^{\pico (1-k_2)}\delta^2_n/4).
	\end{align*}
	Thus, the Borel-Cantelli lemma yields
	\[
	P(A_1\geq 
	\exp\{-C\,n^{\pico (1-k_2)}\delta^2_n/4\}\mbox{~~infinity often}
	)=0,
	\]
	since $\sum^\infty_{n=1}
	P(A_1\geq 
	\exp\{-C\,n^{\pico (1-k_2)}\delta^2_n/4\})
	\leq \sum^\infty_{n=1} \exp(-C\,n^{\pico (1-k_2)}\delta^2_n/4)<\infty. 
	$ We have $A_1\rightarrow 0$ a.s. in $P_{\mathbf{B}_0}$-probability as $n\rightarrow \infty$.
	
	Next we will look at the term $A_2$. By Fubini's theorem, 
	\begin{align*}
		 E_{\mathbf{B}_0}(A_2) &=  E_{\mathbf{B}_0}[(1-\Phi_n)J_{ {\cal B}_\varepsilon}] \\
		&= E_{\mathbf{B}_0} \left(
		(1-\Phi_n)\int_{\mathbf{B}_n\in {\cal B}_\varepsilon}
		\frac{p(\mathbf{Z}_n, {\picotwo \mathbf{W}_n} \mid \mathbf{B}_n)}{p(\mathbf{Z}_n, {\picotwo \mathbf{W}_n} \mid \mathbf{B}_0) }dP(\mathbf{B}_n)
		\right)\notag\\
		&=\int_{\mathbf{B}_n\in {\cal B}_\varepsilon}
		E_{\mathbf{B}_n}(1-\Phi_n)dP(\mathbf{B}_n)\notag\\
		&\leq P(\mathbf{B}_n\in {\cal B}_\varepsilon)
		\sup_{\mathbf{B}_n \in {\cal B}_\varepsilon} E_{\mathbf{B}_0}(1-\Phi_n)\notag\\
		&\leq \sup_{\mathbf{B}_n \in {\cal B}_\varepsilon} E_{\mathbf{B}_0}(1-\Phi_n)
		\leq \exp( -C\,n^{\pico 1-k_2}\delta^2_n).
	\end{align*}
	The last inequality follows from part (ii) of Lemma \ref{pg_prop}. Again, by the similar argument for $A_1$, we apply the Borel-Cantelli lemma to obtain that 
	\[
	P(A_2\geq 
	\exp\{-C\,n^{\pico (1-k_2)}\delta^2_n/2\}\mbox{~~infinity often}
	)=0,
	\]
	since $\sum^\infty_{n=1}
	P(A_2\geq 
	\exp\{-C\,n^{\pico (1-k_2)}\delta^2_n/2\})
	\leq \sum^\infty_{n=1} \exp(-C\,n^{\pico (1-k_2)}\delta^2_n/2)<\infty.$ We have $A_1\rightarrow 0$ a.s. in ${P}_{\mathbf{B}_0}$-probability as $n\rightarrow \infty$. To finish the proof, we will show that 
	\begin{eqnarray*}
		\exp(C\,n^{\pico (1-k_2)}\delta^2_n/2)J_n\rightarrow\infty~\mbox{a.s. in}~ P_{\mathbf{B}_0}\text{-probability as } n\rightarrow\infty.
	\end{eqnarray*}
	Define $${\cal D}_n\overset{def}{=}\left\{ \mathbf{B}_n~:~
	\frac{1}{n^{\pico (1-k_2)}\delta^2_n}\log\frac{p(\mathbf{Z}_n, {\picotwo \mathbf{W}_n} \mid \mathbf{B}_n)}{p(\mathbf{Z}_n, {\picotwo \mathbf{W}_n} \mid \mathbf{B}_0)}<v\right\}\mbox{~~for~~}0<v<C/2.$$
	We have
 \begin{footnotesize}
	\begin{align*} \label{Dn}
		& \exp(C\,n^{\pico (1-k_c)}\delta^2_n/2)J_n \\
        & \hspace{.2cm} = \exp(C\,n^{\pico (1-k_2)}\delta^2_n/2)
		\int \exp\left\{
		-n^{\pico (1-k_2)}\delta^2_n \frac{1}{n^{\pico (1-k_2)}\delta^2_n}\log\frac{p(\mathbf{Z}_n, {\picotwo \mathbf{W}_n} \mid \mathbf{B}_n)}{p(\mathbf{Z}_n, {\picotwo \mathbf{W}_n} \mid \mathbf{B}_0)}\right\}dP(\mathbf{B}_n)\notag\\
		& \hspace{.2cm} \geq \exp\left\{
		(C/2-v)\,n^{\pico (1-k_2)}\delta^2_n
		\right\} P({\cal D}_n\mid {\picotwo \mathbf{Z}_n, \mathbf{W}_n} ). \numbereqn
	\end{align*}
 \end{footnotesize}
	{\picotwo Integrating out the $\mathbf{u}_i$'s in \eqref{jointLemmaC1}, we obtain}
	\begin{align*} \label{a12}
		& \log\frac{p(\mathbf{Z}_n, {\picotwo \mathbf{W}_n} \mid \mathbf{B}_n)}{p(\mathbf{Z}_n, {\picotwo \mathbf{W}_n} \mid \mathbf{B}_0)} = -\frac{1}{2}\sum^n_{i=1}\left\{
		(\mathbf{z}_i-\mathbf{B}^\top_n \mathbf{x}_i)^\top(\bm{\Omega}^{-1}_i+\bm{\Sigma})^{-1}(\mathbf{z}_i-\mathbf{B}^\top_n \mathbf{x}_i)\right.\notag \\
		& \qquad \qquad \qquad \qquad \qquad \hspace{.3cm}  -\left. 
		(\mathbf{z}_i-\mathbf{B}^\top_0 \mathbf{x}_i)^\top(\bm{\Omega}^{-1}_i+\bm{\Sigma})^{-1}(\mathbf{z}_i-\mathbf{B}_0^\top \mathbf{x}_i)
		\right\}\notag\\
		& \qquad = 
		-\frac12\sum^n_{i=1}{\rm tr}\left\{
		(\bm{\Omega}^{-1}_i+\bm{\Sigma})^{-1}(\mathbf{z}_i-\mathbf{B}^\top_n \mathbf{x}_i)(\mathbf{z}_i-\mathbf{B}^\top_n \mathbf{x}_i)^\top\right\}\notag\\
		& \qquad \qquad +\frac12\sum^n_{i=1}
		{\rm tr}\left\{
		(\bm{\Omega}^{-1}_i+\bm{\Sigma})^{-1}(\mathbf{z}_i-\mathbf{B}_0^\top \mathbf{x}_i)(\mathbf{z}_i-\mathbf{B}^\top_0 \mathbf{x}_i)^\top
		\right\}\notag\\
		& \qquad \leq {\pico -\frac12\sum^n_{i=1}
		{\rm tr}
		\left\{
		(\bm{\Omega}^{-1}_i+\bm{\Sigma})^{-1}
		(\mathbf{B}_n^\top \mathbf{x}_i-\mathbf{B}_0^\top \mathbf{x}_i)(\mathbf{B}_n^\top \mathbf{x}_i-\mathbf{B}^\top_0 \mathbf{x}_i)^\top\right\}}\notag\\
		& \qquad \qquad +\sum^n_{i=1}
		\left[
		{\rm tr}\left\{
		(\bm{\Omega}^{-1}_i+{\bm\Sigma})^{-1}
		(\mathbf{B}_n^\top \mathbf{x}_i-\mathbf{B}_0^\top \mathbf{x}_i)(\mathbf{B}_n^\top \mathbf{x}_i-\mathbf{B}^\top_0 \mathbf{x}_i)^\top\right\}\right. \notag\\
		& \qquad \qquad \qquad \hspace{.2cm} \times\left.
		{\rm tr}\left\{
		(\bm{\Omega}^{-1}_i+\bm{\Sigma})^{-1}(\mathbf{z}_i-\mathbf{B}_0^\top \mathbf{x}_i)(\mathbf{z}_i-\mathbf{B}^\top_0 \mathbf{x}_i)^\top
		\right\}\right]^{1/2}. \numbereqn
	\end{align*}
 {\pico Let $\kappa_n=n^{(1-k_2+\rho)/2}$, $\rho>k_2$, where $k_2\in [0,1)$ is defined in Proposition \ref{pgW}.} {\pico
 We have 
 {\footnotesize
\begin{align*}    	\label{eq28}
 & P({\cal D}_n   \mid \mathbf{Z}_n, {\picotwo \mathbf{W}_n}) = P\left( \mathbf{B}_n~:~
	\frac{1}{n^{\pico (1-k_2)}\delta^2_n}\log\frac{p(\mathbf{Z}_n, {\picotwo \mathbf{W}_n} \mid \mathbf{B}_n)}{p(\mathbf{Z}_n, {\picotwo \mathbf{W}_n} \mid \mathbf{B}_0)}<v\right)\notag\\
 &\qquad \geq P \left(
		\frac{1}{n^{\pico (1-k_2)}\delta^2_n}
		\left\{ 
		{\rm tr}\sum^n_{i=1}
		(\bm{\Omega}^{-1}_i+\bm{\Sigma})^{-1}
		(\mathbf{B}_n^\top \mathbf{x}_i-\mathbf{B}_0^\top \mathbf{x}_i)(\mathbf{B}_n^\top \mathbf{x}_i-\mathbf{B}^\top_0 \mathbf{x}_i)^\top\right\}^{1/2}\right.\notag\\
		& \qquad \qquad \times 
		\left.
		\left\{ 
		{\rm tr}\sum^n_{i=1}
(\bm{\Omega}^{-1}_i+\bm{\Sigma})^{-1}
		(\mathbf{z}_i-\mathbf{B}_0^\top \mathbf{x}_i)(\mathbf{z}_i-\mathbf{B}^\top_0 \mathbf{x}_i)^\top\right\}^{1/2}< v
		\right)  \notag\\
 &\qquad \geq P \left(
		\frac{1}{n^{\pico (1-k_2)}\delta^2_n}
		\left\{ 
		{\rm tr}\sum^n_{i=1}
		(\bm{\Omega}^{-1}_i+\bm{\Sigma})^{-1}
		(\mathbf{B}_n^\top \mathbf{x}_i-\mathbf{B}_0^\top \mathbf{x}_i)(\mathbf{B}_n^\top \mathbf{x}_i-\mathbf{B}^\top_0 \mathbf{x}_i)^\top\right\}^{1/2}< \frac{v}{\kappa_n},\right.\notag\\
		& \qquad \qquad \qquad
		\left. 
		\left\{ 
		{\rm tr}\sum^n_{i=1}
		(\bm{\Omega}^{-1}_i+\bm{\Sigma})^{-1}
		(\mathbf{z}_i-\mathbf{B}_0^\top \mathbf{x}_i)(\mathbf{z}_i-\mathbf{B}^\top_0 \mathbf{x}_i)^\top\right\}^{1/2}< \kappa_n
		\right)\notag\\
		& \qquad = P\left( \frac{1}{n^{\pico (1-k_2)}\delta^2_n}
		\left\{
		{\rm tr}\sum^n_{i=1}
		(\bm{\Omega}^{-1}_i+\bm{\Sigma})^{-1}
		(\mathbf{B}_n^\top \mathbf{x}_i-\mathbf{B}_0^\top \mathbf{x}_i)(\mathbf{B}_n^\top \mathbf{x}_i-\mathbf{B}^\top_0 \mathbf{x}_i)^\top\right\}^{1/2}< \frac{v}{\kappa_n}\right), \numbereqn
	\end{align*} }
	by noting that 
 {\footnotesize 
	\[
	P\left(
	\left\{ 
	{\rm tr}\sum^n_{i=1}
	(\bm{\Omega}^{-1}_i+\bm{\Sigma})^{-1}
	(\mathbf{z}_i-\mathbf{B}_0^\top \mathbf{x}_i)(\mathbf{z}_i-\mathbf{B}^\top_0 \mathbf{x}_i)^\top\right\}^{1/2}> \kappa_n 
	\mbox{~~infinity often}\right)=0. 
	\]}
	Thus, from \eqref{eq28}, Assumption (A3), and  
	Proposition \ref{pgW}, we have that 
 {\footnotesize
	\begin{align*}
		& P({\cal D}_n\mid \mathbf{Z}_n, {\picotwo \mathbf{W}_n}) \\
		& \qquad \geq P \left( 
		\frac{1}{n^{\pico (1-k_2)}\delta^2_n}\left\{ 
		{\rm tr}\sum^n_{i=1}
		(\bm{\Omega}^{-1}_i+\bm{\Sigma})^{-1}
		(\mathbf{B}_n^\top \mathbf{x}_i-\mathbf{B}_0^\top \mathbf{x}_i)(\mathbf{B}_n^\top \mathbf{x}_i-\mathbf{B}^\top_0 \mathbf{x}_i)^\top\right\}^{1/2}< \frac{v}{\kappa_n}\right)\notag\\
		& \qquad \geq 
		P \left( 
	\left\{ 
		{\rm tr}\sum^n_{i=1}
		(\mathbf{B}_n-\mathbf{B}_0)
		(\bm{\Omega}^{-1}_i+\bm{\Sigma})^{-1}
		(\mathbf{B}_n^\top-\mathbf{B}_0^\top)\mathbf{x}_i\mathbf{x}_i^\top\right\}^{1/2}< \frac{n^{\pico (1-k_2)}\delta_nv}{\kappa_n}\right)\notag\\
		& \qquad \geq P \left( 
		\|\mathbf{B}_n-\mathbf{B}_0\|_F< \frac{v\delta_n}{n^{\rho/2} }\right).
	\end{align*}}}
 Note that the second inequality holds since $\delta_n\geq 1$. For sufficiently large $n$ and any $D > 0$, we have 
	\[
	P\left(
	\|\mathbf{B}_n-\mathbf{B}_0\|_F< \frac{2v\delta_n}{3n^{\rho/2}}\right)
	>\exp(-D n^{\pico (1-k_2)}\delta_n^2),
	\]
	we let $D \in (0,C/2-v)$ so that  
	$\exp(C\,n^{\pico (1-k_2)}\delta^2_n/2)J_n\rightarrow \infty$  a.s. in $P_{\mathbf{B}_0}$-probability as $n\rightarrow\infty$. This establishes the result.
\end{proof}

{\picotwo Recall that $\mathbf{Z}_n=(\mathbf{z}_1,\dots,\mathbf{z}_n)^\top$ and $\mathbf{W}_n=(\bm{\omega}_1,\dots,\bm{\omega}_n)^\top$ are $n\times q$ matrices with rows $\mathbf{z}_i$ and $\bm{\omega}_i$, $i = 1, \ldots, n$, where $z_{ik}$ and $\omega_{ik}, k = 1, \ldots, q$, are defined as in \eqref{p4}. Meanwhile, $\mathbf{X}_n = (\mathbf{x}_1, \ldots, \mathbf{x}_n)^\top$ is the $n \times p$ design matrix, and $\mathbf{Y}_n = (\mathbf{y}_1, \ldots, \mathbf{y}_n)^\top$ is the $n \times q$ matrix of mixed-type responses. 

When there are discrete responses in $\mathbf{Y}_n$, the matrix $\mathbf{W}_n$ is an unobserved and random matrix. In this case, it is more desirable to analyze the posterior conditioned on the actually observed data $\mathbf{X}_n$ and $\mathbf{Y}_n$ rather than on $\mathbf{Z}_n$ and $\mathbf{W}_n$. The next proposition allows us to relate events defined on the posterior $p(\mathbf{B}_n \mid \mathbf{Z}_n, \mathbf{W}_n)$ with the same events on the posterior $p(\mathbf{B}_n \mid \mathbf{X}_n, \mathbf{Y}_n)$.

\begin{proposition} \label{Prop:C.6}
 Assume the same setup as Proposition \ref{pg_thm}. Let $\mathcal{E}_n$ be an event depending on $\mathbf{B}_n$ and $\mathbf{B}_0$. 
 \begin{enumerate}[label=(\roman*)]
     \item If $\sup_{\mathbf{B}_0} E_{\mathbf{B}_0} P( \mathcal{E}_n \mid \mathbf{Z}_n, \mathbf{W}_n ) \rightarrow 0$ as $n \rightarrow \infty$, then we also have
     \begin{equation*}
     \sup_{\mathbf{B}_0} E_{\mathbf{B}_0} P( \mathcal{E}_n \mid \mathbf{X}_n, \mathbf{Y}_n ) \rightarrow 0~ \text{ as } ~n \rightarrow \infty.
     \end{equation*}
     \item If $\inf_{\mathbf{B}_0} E_{\mathbf{B}_0} P( \mathcal{E}_n \mid \mathbf{Z}_n, \mathbf{W}_n ) > \delta$ as $n \rightarrow \infty$, where $\delta \in (0, 1)$, then
     \begin{equation*}
      \inf_{\mathbf{B}_0} E_{\mathbf{B}_0} P( \mathcal{E}_n \mid \mathbf{X}_n, \mathbf{Y}_n ) > \delta ~ \text{ as } ~n \rightarrow \infty.
     \end{equation*}
 \end{enumerate}
\end{proposition}

\begin{proof}
We first prove (i). We have
\begin{align*}
	&\hspace{-0in}\sup_{\mathbf{B}_0}  E_{\mathbf{B}_0} P \left( 
		\mathcal{E}_n ~\bigg|~\mathbf{X}_n, \mathbf{Y}_n
		\right) \\
  &\hspace{-0in}= \sup_{\mathbf{B}_0}  E_{\mathbf{B}_0} 
  \left\{\int
  P \left( 
		\mathcal{E}_n ~\bigg|~\mathbf{W}_n,  \mathbf{X}_n, \mathbf{Y}_n
		\right) 
  p(\mathbf{W_n}\mid \mathbf{X}_n, \mathbf{Y}_n)
  d\mathbf{W}_n\right\}\\
  &\hspace{-0in}= \sup_{\mathbf{B}_0}  E_{\mathbf{B}_0} 
  \left\{\int
  P \left( 
		\mathcal{E}_n ~\bigg|~\mathbf{Z}_n,  \mathbf{W}_n
		\right) 
  p(\mathbf{W_n}\mid \mathbf{X}_n, \mathbf{Y}_n)~
  d\mathbf{W}_n\right\}\\
    &\hspace{-0in}\leq \sup_{\mathbf{B}_0}  E_{\mathbf{B}_0} 
  \left\{\int
  \sup_{\mathbf{B}_0}  E_{\mathbf{B}_0}  P \left( 
		\mathcal{E}_n ~\bigg|~\mathbf{Z}_n, \mathbf{W}_n
		\right) 
  p(\mathbf{W_n}\mid \mathbf{X}_n, \mathbf{Y}_n)
  ~d\mathbf{W}_n\right\}\\
  &\hspace{-0in}\longrightarrow 
  0\mbox{~as~} n\rightarrow \infty.
	\end{align*}
  To prove (ii), note that 
\begin{align*}
	&\hspace{-0in}\inf_{\mathbf{B}_0}  E_{\mathbf{B}_0} P \left( 
		\mathcal{E}_n ~\bigg|~\mathbf{X}_n, \mathbf{Y}_n
		\right) \\
  &\hspace{-0in}= \inf_{\mathbf{B}_0}  E_{\mathbf{B}_0} 
  \left\{\int
  P \left( 
		\mathcal{E}_n ~\bigg|~\mathbf{W}_n,  \mathbf{X}_n, \mathbf{Y}_n
		\right) 
  p(\mathbf{W_n}\mid \mathbf{X}_n, \mathbf{Y}_n)
  d\mathbf{W}_n\right\}\\
  &\hspace{-0in}= \inf_{\mathbf{B}_0}  E_{\mathbf{B}_0} 
  \left\{\int
  P \left( 
		\mathcal{E}_n ~\bigg|~\mathbf{Z}_n,  \mathbf{W}_n
		\right) 
  p(\mathbf{W_n}\mid \mathbf{X}_n, \mathbf{Y}_n)~
  d\mathbf{W}_n\right\}\\
    &\hspace{-0in}\geq \inf_{\mathbf{B}_0}  E_{\mathbf{B}_0} 
  \left\{\int
  \inf_{\mathbf{B}_0}  E_{\mathbf{B}_0}  P \left( 
		\mathcal{E}_n ~\bigg|~\mathbf{Z}_n, \mathbf{W}_n
		\right) 
  p(\mathbf{W_n}\mid \mathbf{X}_n, \mathbf{Y}_n)
  ~d\mathbf{W}_n\right\}\\
  &\hspace{-0in} > \delta \mbox{~as~} n\rightarrow \infty.
	\end{align*}
\end{proof}
}

\subsection{Proof of Theorem \ref{one-step_contraction_rate}}

We first prove a more general result about the asymptotic behavior of the posterior for $\mathbf{B}_n$ under the Mt-MBSP model when Assumptions (S1)-(S2) hold. The reason for first obtaining Theorem \ref{thm_D} below is because we need Theorem \ref{thm_D} in order to prove Theorem \ref{thm_C}. The proof of Theorem \ref{one-step_contraction_rate} is {\picotwo also} a straightforward modification of Theorem \ref{thm_D}. 

% {\picotwo Theorem \ref{thm_D} considers the conditional distribution of $\|\bm{B}_n-\bm{B}\|_F$ given ${\mathbf z}_n$ and ${\bm W}_n$. It is noted that ${\bm W}_n$ is unobserved and random. Instead, Theorem \ref{one-step_contraction_rate} provide the conditional distribution of $\|\bm{B}_n-\bm{B}\|_F$ given ${\bm X}_n$ and ${\bm Y}_n$, which are observed. The result is straightforwardly derived based on Theorem \ref{thm_D} and the following derivation. 
%	\begin{align*}
%	&\hspace{-0in}\sup_{\mathbf{B}_0}  E_{\mathbf{B}_0} P \left( 
%		\|\mathbf{B}_n-\mathbf{B}_0\|_F> \varepsilon \delta_n~\bigg|~\mathbf{X}_n,~\mathbf{Y}_n
%		\right) \\
%  &\hspace{-0in}= \sup_{\mathbf{B}_0}  E_{\mathbf{B}_0} 
%  \left\{\int
%  P \left( 
%		\|\mathbf{B}_n-\mathbf{B}_0\|_F> \varepsilon \delta_n~\bigg|~\mathbf{W}_n,~ \mathbf{X}_n,~\mathbf{Y}_n
%		\right) 
%  p(\mathbf{W_n}\mid \mathbf{X}_n,~\mathbf{Y}_n)
%  d\mathbf{W}_n\right\}\\
%  &\hspace{-0in}= \sup_{\mathbf{B}_0}  E_{\mathbf{B}_0} 
%  \left\{\int
%  P \left( 
%		\|\mathbf{B}_n-\mathbf{B}_0\|_F> \varepsilon \delta_n~\bigg|~\mathbf{Z}_n, \mathbf{W}_n
%		\right) 
%  p(\mathbf{W_n}\mid \mathbf{X}_n, \mathbf{Y}_n)
%  d\mathbf{W}_n\right\}\\
%    &\hspace{-0in}\leq \sup_{\mathbf{B}_0}  E_{\mathbf{B}_0} 
%  \left\{\int
%  \sup_{\mathbf{B}_0}  E_{\mathbf{B}_0}  P \left( 
%		\|\mathbf{B}_n-\mathbf{B}_0\|_F> \varepsilon \delta_n~\bigg|~\mathbf{Z}_n,~ \mathbf{W}_n
%		\right) 
%  p(\mathbf{W_n}\mid \mathbf{X}_n,~\mathbf{Y}_n)
%  d\mathbf{W}_n\right\}\\
%  &\hspace{-0in}\longrightarrow 
%  0\mbox{~as~} n\rightarrow \infty.
%	\end{align*}
%}   

\begin{theorem} \label{thm_D}
	Assume the same setup as Proposition \ref{pg_thm}. Suppose that conditions (S1)-(S2), (A3)-(A4) and (B1)-(B3) hold, and we endow $\mathbf{B}_n$ with the Mt-MBSP prior \eqref{Mt-MBSP}-\eqref{global_local_prior} and $\bm{\Sigma}$ with the inverse-Wishart prior \eqref{IWpriorSigma}. Then, for any arbitrary $\varepsilon>0$, 
	\begin{equation*}
		\sup_{\mathbf{B}_0}  E_{\mathbf{B}_0} P \left( 
		\|\mathbf{B}_n-\mathbf{B}_0\|_F> \varepsilon \delta_n~\bigg|~ \mathbf{Z}_n, \mathbf{W}_n
		\right)\longrightarrow  0\mbox{~as~} n\rightarrow \infty,
	\end{equation*}
	where $\delta^2_n \in (s_0\log p_n/n,~
	s_0\log p_n)$.
\end{theorem}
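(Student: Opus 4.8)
The plan is to leverage the fact that Proposition \ref{pg_thm} already carries out the entire testing and posterior-ratio argument, reducing the theorem to a single prior-mass condition. Once we verify that the Mt-MBSP prior \eqref{Mt-MBSP}--\eqref{global_local_prior} together with the inverse-Wishart prior \eqref{IWpriorSigma} satisfies the hypothesis \eqref{ssv}, namely
\[
P\!\left( \|\mathbf{B}_n - \mathbf{B}_0\|_F < \widetilde{C}\, n^{-\rho/2}\delta_n \right) > \exp(-D\,n\delta_n^2)
\]
for some finite $\widetilde{C}, D > 0$, the conclusion follows immediately from Proposition \ref{pg_thm}. Thus the whole task is to produce a lower bound on the prior probability of a small Frobenius ball of radius $R_n := \widetilde{C}\, n^{-\rho/2}\delta_n$ around $\mathbf{B}_0$. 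To this end I would use the row-wise independence of \eqref{Mt-MBSP} and split the constraint $\|\mathbf{B}_n-\mathbf{B}_0\|_F^2 = \sum_{j\in S_0}\|\mathbf{b}_j - (\mathbf{B}_0)_j\|^2 + \sum_{j\notin S_0}\|\mathbf{b}_j\|^2 < R_n^2$ into a signal part and a null part, allocating the budget $R_n^2/2$ to each and bounding the two factors separately.

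For the $s_0$ signal rows, the true coefficients may be as large as $O(\sqrt{\log p_n})$ by Assumption (B3), so I would invoke the heavy tails of the local prior through Proposition \ref{PropositionA2}: with probability at least $\exp(-C\log p_n)$ the local scale $\xi_j$ is large enough that the variance $\tau\xi_j$ dominates $\|(\mathbf{B}_0)_j\|^2$, so the Gaussian $\mathcal{N}_q(\mathbf{0},\tau\xi_j\mathbf{I}_q)$ places non-negligible mass in the per-row ball of radius $R_n/\sqrt{2 s_0}$. Multiplying over the $s_0$ independent signal rows yields a factor bounded below by $\exp(-C s_0\log p_n)$, which is at least $\exp(-C n\delta_n^2)$ since $\delta_n^2 \geq s_0\log p_n/n$.

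For the $p_n - s_0$ null rows, the tiny global scale $\tau = c\,p_n^{-1}n^{-\rho-1}$ from Assumption (B1) forces each $\mathbf{b}_j$ to concentrate sharply at the origin. Writing $\|\mathbf{b}_j\|^2 = \tau\xi_j\,\chi^2_{q,j}$ with independent chi-squared factors, I would control $P\!\left(\sum_{j\notin S_0}\|\mathbf{b}_j\|^2 \geq R_n^2/2\right)$ by truncating the local scales at a threshold $M_n$: on $\{\xi_j \le M_n\}$ a Markov/second-moment estimate bounds the sum, while the aggregated tail $\sum_{j\notin S_0}P(\xi_j > M_n)$ is handled via the polynomial tail of \eqref{global_local_prior}. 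A short computation shows the typical value of $\sum_{j\notin S_0}\|\mathbf{b}_j\|^2$ is of order $n^{-\rho-1}$ up to a moderate factor, which is $o(R_n^2)$ because $R_n^2 \geq \widetilde{C}^2 n^{-\rho-1}s_0\log p_n$; hence this null-row probability is bounded below by a constant (indeed tends to one). Combining the signal and null bounds verifies \eqref{ssv} for a suitable $D$, and the theorem follows.

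The main obstacle will be the null-row step. Because the polynomial-tailed priors \eqref{global_local_prior} may have infinite moments, a direct second-moment bound on $\sum_{j\notin S_0}\|\mathbf{b}_j\|^2$ over roughly $p_n$ rows is unavailable, so the truncation level $M_n$ must be chosen carefully to balance the Markov bound on the truncated contribution against the aggregated tail probability $\sum_{j\notin S_0}P(\xi_j > M_n)$, all while keeping the surviving mass comfortably below $R_n^2/2$ thanks to the smallness of $\tau$. Verifying that these competing quantities can be simultaneously controlled—and that the resulting exponent stays within the budget $\exp(-D n\delta_n^2)$ permitted by \eqref{ssv}—is the delicate part of the argument.
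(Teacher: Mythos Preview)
Your proposal is correct and follows the paper's route almost exactly: reduce to the prior-mass condition \eqref{ssv} via Proposition \ref{pg_thm}, split $\|\mathbf{B}_n-\mathbf{B}_0\|_F^2$ into the $s_0$ signal rows and the $p_n-s_0$ null rows, handle the signal part through the heavy-tail bound of Proposition \ref{PropositionA2} to get a factor $\exp(-C s_0\log p_n)\ge \exp(-C n\delta_n^2)$, and handle the null part through the smallness of $\tau$ from (B1).

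The one point of departure is the null-row step. Where you anticipate a truncation of the $\xi_j$ to cope with possibly infinite moments, the paper applies Markov's inequality in a single line,
\[
P\Big(\textstyle\sum_{j\notin S_0}\|\mathbf{b}_j\|^2 > s_0\Delta_n^2\Big)\le \dfrac{\tau_n\,(p_n-s_0)\,\mathrm{tr}(\bm\Sigma)\,E(\xi_1)}{s_0\Delta_n^2}\longrightarrow 0,
\]
and simply asserts $E(\xi_1)<\infty$. Your truncation scheme is the more robust of the two---it would cover, for instance, the horseshoe, which has $E\xi=\infty$---while the paper's version is shorter but tacitly relies on a finite first moment of the local prior. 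Either way, the ``main obstacle'' you flag is not actually delicate in the paper's treatment: under its implicit moment assumption it collapses to a one-line Markov bound, with no balancing of truncation levels required.
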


\begin{proof}
	In light of Proposition \ref{pg_thm}, it is enough to prove that the Mt-MBSP prior \eqref{Mt-MBSP}-\eqref{global_local_prior} for $\mathbf{B}_n$ satisfies condition \eqref{ssv}. That is, 
	for any constants $C>0$ and $D >0$, 
	\begin{eqnarray}\label{C19}
		P \left( \|\mathbf{B}_n-\mathbf{B}_0\|_F <  C \frac{\delta_n}{n^{\rho/2}} \right)
		>\exp(-D n^{\pico (1-k_2)}\delta_n^2). 
	\end{eqnarray}
	under the Mt-MBSP prior on \eqref{Mt-MBSP}-\eqref{global_local_prior} on $\mathbf{B}_n$. Let $\mathbf{B}_n=( \bm{\beta}_1,\dots, \bm{\beta}_q)=(\mathbf{b}^n_1,\dots,\mathbf{b}^n_{p_n})^\top$. 
	Since the rows of $\mathbf{B}_n$ are independent, the marginal 
	prior of the Mt-MBSP model can also be written as 
	\begin{eqnarray}
		p(\mathbf{B}_n) =\prod^{p_n}_{j=1}p(\mathbf{b}_j^n),
		\label{PP}
	\end{eqnarray}
	where $\mathbf{b}^n_{j}\mid \xi_j,\bm{\Sigma} \overset{ind}{\sim}{\cal N}_q( \bm{0}_q,~\tau_n\xi_j\bm{\Sigma})$ for $j = 1,\dots,p_n$. Therefore, the marginal prior for $p(\mathbf{b}^n_{j})$ is 
	\[
	p(\mathbf{b}^n_{j}) = \int^\infty_{0}
	(2\pi\tau_n\xi_j)^{-q/2}|\bm{\Sigma}|^{-1/2}\exp\left(
	-\frac{1}{2\xi_j\tau_n}\|\bm{\Sigma}^{-1/2} \mathbf{b}^n_j\|^2
	\right)p(\xi_j)d\xi_j,
	\]
	where $p(\xi_j)$ is the hyperprior density \eqref{global_local_prior}. We have 
	\begin{align*} \label{BB0}
		& P\left( \|\mathbf{B}_n-\mathbf{B}_0\|^2_F < C^2\frac{\delta_n^2}{n^\rho} \right) =
		P\left( \sum_{j\in S_0}\|
		\mathbf{b}^n_j-\mathbf{b}^0_j
		\|^2 + 
		\sum_{j\notin S_0}\|\mathbf{b}^n_j\|^2
		< C^2\frac{\delta_n^2}{n^\rho} 
		\right) \\
		& \qquad \qquad \geq P\left( \sum_{j\in S_0}\|
		\mathbf{b}^n_j-\mathbf{b}^0_j
		\|^2 < C^2\frac{\delta_n^2}{2n^\rho},~~ 
		\sum_{j\notin S_0}\|\mathbf{b}^n_j\|^2
		<C^2 \frac{\delta_n^2}{2n^\rho} 
		\right) \\
		& \qquad \qquad \geq \prod_{j\in S_0} P\left( \|
		\mathbf{b}^n_j-\mathbf{b}^0_j
		\|^2 < C^2\frac{\delta_n^2}{2s_0n^\rho}\right) P\left( \sum_{j\notin S_0}
		\|\mathbf{b}^n_j\|^2
		< C^2\frac{\delta_n^2}{2n^\rho} 
		\right) \\
		& \qquad \qquad \overset{def}{=} {\cal I}_1\times {\cal I}_2.  \numbereqn
	\end{align*}
	We first bound ${\cal I}_1$ from below. In the following, 
	we let $C_1$ and $C_2$ be appropriate constants that do not depend on $n$, while $C_3$ is the constant from  Proposition \ref{PropositionA2}. Let $\Delta^2_n=C^2\delta_n^2/(2s_0n^\rho)$. Then $\Delta^2_n= O(\log p_n)$. From \eqref{BB0}, we have 
 {\footnotesize
	\begin{flalign}  
		\mathcal{I}_1 & = \prod_{j \in S_0} P \left( \lVert \mathbf{b}_j^n - \mathbf{b}_j^0 \rVert^2 < {\Delta}^2_n \right) \notag\\
		& = \prod_{j \in S_0} \left[ \int_{0}^{\infty} \int_{\lVert \mathbf{b}_j^n - \mathbf{b}_j^0 \rVert^2 < {\Delta}^2_n
		} \frac{1}{(2 \pi \tau_n \xi_j )^{q/2} \lvert \bm{\Sigma} \rvert^{1/2}} \exp \left( - \frac{\lVert \bm{\Sigma}^{-1/2} \mathbf{b}_j^n \rVert^2}{2 \xi_j \tau_n}  \right) p(\xi_j) d \mathbf{b}_j^n d \xi_j \right] \notag\\
		& \geq \prod_{j \in S_0} \left[ \int_{0}^{\infty}  \int_{\lVert \mathbf{b}_j^n - \mathbf{b}_j^0 \rVert^2 <{\Delta}^2_n} (2 \pi \tau_n \xi_j k_1 )^{-q/2} \exp \left( - \frac{\lVert \mathbf{b}_j^n \rVert^2}{2 \xi_j \tau_n k_1} \right) p(\xi_j) d \mathbf{b}_j^n d \xi_j \right] \notag\\
		& \geq \prod_{j \in S_0} \prod^q_{k=1}\left[ \int_{0}^{\infty}  \int_{ \lvert b_{jk}^n - b_{jk}^0 \rvert^2 < \frac{\Delta^2_n}{q}} \frac{1}{ (2 \pi \tau_n \xi_j k_1)^{1/2}} \exp \left( - \frac{(b_{jk}^n)^2}{2 \xi_j \tau_n k_1} \right) p(\xi_j) d b_{jk}^n d \xi_j \right]\notag\\
		&\overset{\Delta}{=}A,\label{ff}
	\end{flalign} }
	where the third inequality in \eqref{ff} comes from the boundedness of $\bm{\Sigma}$ in \eqref{s2} and Assumption (A4) that $0 < k_1^{-1} < \lambda_{\min} (\bm{\Sigma}) \leq \lambda_{\max}(\bm{\Sigma}) < k_1 < \infty$. 
	
	We continue to lower-bound the term $A$ in \eqref{ff}. {\pico Let $p'_n=\exp\{n^{(1-k_2)}\}$ be a sequence where $k_2\in [0,1)$ is defined in Proposition \ref{pgW}.} In the fifth line of the following display \eqref{I1lowerboundpt1}, we use Assumption (B3) that $\max_{j,k} \lvert (\mathbf{B}_{0})_{ij}\rvert^2=O(\log p_n)$ so that the term $\{|b^0_{jk}|\vee (\Delta_n/q^{1/2} )\}^2\leq M \log p_n$ for some finite number $M>0$. We also apply Proposition \ref{PropositionA2} to the integral term by the fact that $p^{\star} (\xi_j) = C_2 \xi_j^{-1/2} \pi (\xi_j) = C_2 \xi_j^{-(a+1/2)-1} L(\xi)$, where $C_2$ is the normalizing constant. 
 
 We have
\begin{footnotesize}
	\begin{align*}  
		 A & = \prod_{j \in S_0} \prod_{k=1}^{q} \left[ \int_{0}^{\infty}  \int_{ b_{jk}^{0} - \Delta_n/q^{1/2} }^{b_{jk}^{0} + \Delta_n/q^{1/2} } \frac{1}{(2 \pi \tau_n \xi_j k_1 )^{1/2}} \exp \left( - \frac{(b_{jk}^n)^2}{2 \xi_j \tau_n k_1} \right) p(\xi_j) d b_{jk}^n d \xi_j \right] \notag\\
		& \geq \prod_{j \in S_0} \prod_{k=1}^{q} \left[ \int_{0}^{\infty} \frac{C_1\,\Delta_n}{(\tau_n \xi_j)^{1/2}} \exp \left( - \frac{\{|b^0_{jk}|\vee (\Delta_n/q^{1/2} )\}^2}{2 \xi_j \tau_n k_1} \right) p(\xi_j) d \xi_j \right] (1+o(1)) \notag\\
		&> \prod_{j \in S_0} \prod_{k=1}^{q} \left[ \int_{s_0 {\pico p'_n} n^{\rho}\log p_n/\delta^2_n}^{\infty} \frac{C_1\Delta_n}{(\tau_n \xi_j)^{1/2}} \exp \left( - \frac{(\{|b^0_{jk}|\vee (\Delta_n/q^{1/2} )\}^2}{2 \xi_j \tau_n k_1} \right) p(\xi_j) d \xi_j \right] (1+o(1)) \notag\\
		& \geq \prod_{j \in S_0} \prod_{k=1}^{q} \left[ \frac{C_1\Delta_n}{ C_2 ( \tau_n)^{1/2}} \exp \left( - \frac{\{|b^0_{jk}|\vee (\Delta_n/q^{1/2} )\}^2}{2 k_1 \tau_n s_0 {\pico p'_n} n^{\rho}\log p_n/\delta_n^2} \right) \int_{s_0 {\pico p'_n} n^{\rho}\log p_n/\delta^2_n}^{\infty} p^{\star}(\xi_j) d \xi_j \right] \\
        &\qquad \qquad \qquad \times (1+o(1)), \notag\\
		& \qquad \qquad \qquad  \textrm{ where } p^{\star}(\xi_j) = C_2 \xi_j^{-1/2} p(\xi_j) \textrm{ and } \int_{0}^{\infty} p^{\star} (\xi_j) d \xi_j = 1 \notag  \\
		& > \prod_{j \in S_0} \prod_{k=1}^{q} \left[ \frac{C_1\Delta_n}{C_2 ( \tau_n)^{1/2}} \exp \left( - \frac{M\delta^2_n}{2 k_1 \tau_n s_0 {\pico p'_n} n^{\rho}} \right) \exp \left(-C_3\,\log {\pico p'_n} \right) \right] (1+o(1)) \notag\\
		& = \exp \left( q s_0 \left[ \log \left( \frac{C_1}{C_2} \right) - \frac{1}{2} \log\,\tau_n +  \log\,\Delta_n - \frac{M\delta^2_n}{2 k_1 \tau_n s_0 {\pico p'_n} n^{\rho}} - C_3\, \log {\pico p'_n} \right] \right)(1+o(1)) 
    \end{align*}
  \end{footnotesize}
  \begin{footnotesize}
  \begin{align*} \label{I1lowerboundpt1} 
		& \hspace{.2cm} > \exp \left( -q C_3 n^{(\pico 1-k_2)} - \frac{q Mn^{\pico (1-k_2)} \delta^2_n}{2 k_1 \tau_n {\pico p'_n} n^{\pico \rho+1-k_2}} + qs_0 \left[ \log \left( \frac{C_1}{C_2} \right) - \frac{1}{2} \log\,\tau_n + \log\,\Delta_n \right] \right)  \\
    &    \qquad \qquad \times (1+o(1)) \\
  	& \hspace{.2cm} > \exp \left( -q C_3 n^{(\pico 1-k_2)}\delta^2_n - \frac{q Mn^{\pico (1-k_2)} \delta^2_n}{2 k_1 \tau_n {\pico p'_n} n^{\pico \rho+1-k_2}} + qs_0 \left[ \log \left( \frac{C_1}{C_2} \right) - \frac{1}{2} \log\,\tau_n + \log\,\Delta_n \right]\right) \\
   &\qquad \qquad  \times (1+o(1)). \\
  \numbereqn
 	\end{align*}
  \end{footnotesize}
	\noindent	By Assumption (B1), {\pico $\tau_n n^{ \rho+1-k_2}p'_n=c$}, and further, by Assumptions (S1)-(S2), $s_0 \log p_n /n \leq \delta_n^2$. Meanwhile, all the other terms in the exponent of \eqref{I1lowerboundpt1} are of order ${\pico o(n^{(1-k_2)}\delta^2_n)}$ except ${\pico-q C_3 n^{(1-k_2)}\delta^2_n}$. Therefore, \eqref{I1lowerboundpt1} implies
	{\pico \begin{eqnarray}
		\mathcal{I}_1 > \exp\left(- D n^{(1-k_2)}\delta_n\right), \label{lowerboundpt2}
	\end{eqnarray} }
	for sufficiently large $n$. Further, to derive the lower bound of $\mathcal{I}_2$ in \eqref{BB0}, we apply Markov's inequality and the fact that $\mathbf{b}^n_j\mid \xi_j \sim {\mathcal N}_q(0_q, \tau_n\xi_j \bm{\Sigma})$ for $j = 1, \ldots, p_n$. We have 
	\begin{eqnarray}
		{\cal I}_2 &=&
		P\left(
		\sum_{j \notin S_0}\|\mathbf{b}^n_{j}\|^2
		< s_0 \Delta_n^2 \right)\notag\\
		&=&
		1-P\left( \sum_{j\notin S_0}\|\mathbf{b}^n_{j}\|^2
		>s_0 \Delta_n^2\right)\notag\\ 
		&=&1- E \left[P\left(
		\sum_{j\notin S_0}\|\mathbf{b}^n_{j}\|^2
		>s_0 \Delta_n^2 \hspace{.1cm} \bigg| \hspace{.1cm} \xi_1,\dots,\xi_{p_n} \right)\right]\notag\\
		&\geq&1- E \left[
		\frac{ E \left( \sum_{j\notin S_0}\|\mathbf{b}^n_{j}\|^2\mid \xi_1,\dots,\xi_{p_n}  \right) }{s_0 \Delta^2_n}\right]\notag\\
		&=&1-
		\frac{\tau_n {\rm tr}(\Sigma) E \left( \sum_{j\notin S_0} \xi_j \right)}{s_0\Delta^2_n} \notag \\
		& \geq &
		1-
		\frac{\tau_n (p_n-s_0) E (\xi_1)}{ k_1 s_0\Delta^2_n}\rightarrow 1 \textrm{ as } n \rightarrow \infty,  \label{lowerboundI3}
	\end{eqnarray}
	where the last inequality of \eqref{lowerboundI3} follows from Assumption (A4), Assumption (B1), and the fact $E(\xi_1) < \infty$. Note that the argument of boundedness of ${\rm tr}(\bm{\Sigma})$ is similar to that of \eqref{s2}. {\pico In addition, it is implied  by the assumptions (A3) and (B1) that $p_n \leq \exp\{n^{1-k_2}\}=p'_n$
 since  $n^{k_2}=o(n/\log p_n)$. 
} Thus, 
	{\pico \[
	\frac{\tau_n(p_n-s_0) E(\xi_1)}{k_1s_0\Delta_n^2}=
	\frac{2\tau_n(p_n-s_0) E(\xi_1)}{k_1C^2n^{(1-k_2)}\delta_n^2}
	\rightarrow 0 \mbox{~~as $n \rightarrow \infty$ }.
	\] } 
 By \eqref{BB0}, \eqref{lowerboundpt2}, and \eqref{lowerboundI3}, it is clear that for sufficiently large $n$, \eqref{C19} holds.
\end{proof}

\begin{proof}[Proof of Theorem \ref{one-step_contraction_rate}]
	
	To prove Theorem \ref{one-step_contraction_rate}, we can modify the proofs of Lemma \ref{pg_prop}, Proposition \ref{pg_thm}, and Theorem \ref{thm_D} {\picotwo to show that
\begin{equation*}
		\sup_{\mathbf{B}_0}  E_{\mathbf{B}_0} P \left( 
		\|\mathbf{B}_n-\mathbf{B}_0\|_F> \varepsilon \left( \frac{s_0 \log p_n}{n} \right)^{1/2} ~\bigg|~{\picotwo \mathbf{Z}_n, \mathbf{W}_n}
		\right)\longrightarrow  0\mbox{~as~} n\rightarrow \infty.
	\end{equation*}
 }
 The proofs go through with $\delta_n$ replaced by $(s_0 \log p_n / n)^{1/2}$ as long as we assume that $s_0 = o(n / \log p_n)$, as in Assumption (A2). Since $s_0$ is allowed to diverge as $n \rightarrow \infty$, we must have $\log p_n = o(n)$, as in Assumption (A1). Thus, replacing conditions (S1)-(S2) in Theorem \ref{thm_D} with conditions (A1)-(A2) gives the desired posterior contraction rate when $\log p_n = o(n)$, {\picotwo conditional on $\mathbf{Z}_n$ and $\mathbf{W}_n$. 

To obtain the posterior contraction rate conditional on $\mathbf{X}_n$ and $\mathbf{Y}_n$, we define the event $\mathcal{E}_n = \{ \|\mathbf{B}_n-\mathbf{B}_0\|_F> \varepsilon [(s_0 \log p_n )/n]^{1/2} \}$ and use part (i) of Proposition \ref{Prop:C.6}.}
 
\end{proof}

\subsection{Proofs of Theorem \ref{thm_inconsistent} and Corollary \ref{corollary_inconsistent}} 

\begin{proof}[Proof of Theorem \ref{thm_inconsistent}]
{To give a counterexample, let $k_2=0$ for Proposition \ref{pgW}.}
	Let $\Phi_n=1(\mathbf{Z}_n\in {\cal C}_n)$, where 
	${\cal C}_n$ is defined as in \eqref{a13}. Let 
	\[
	J_{ {\cal B}_\varepsilon}=\int_{\mathbf{B}_n\in{\cal B}_\varepsilon}
	\frac{p(\mathbf{Z}_n, {\picotwo \mathbf{W}_n} \mid \mathbf{B}_n)}{p(\mathbf{Z}_n, {\picotwo \mathbf{W}_n} \mid \mathbf{B}_0)}
	dP(\mathbf{B}_n)
	\mbox{~~and~~}
	J_n = \int
	\frac{p(\mathbf{Z}_n, {\picotwo \mathbf{W}_n} \mid \mathbf{B}_n)}{p(\mathbf{Z}_n, {\picotwo \mathbf{W}_n} \mid \mathbf{B}_0)}
	dP(\mathbf{B}_n).
	\]
	The posterior probability of ${\cal B}_\varepsilon=\{\|\mathbf{B}_n -\mathbf{B}_0\|_F>\varepsilon/4\}$, where $\varepsilon>0$, can be rewritten as 
	\[
	P({\cal B}_\varepsilon\mid \mathbf{Z}_n, {\picotwo \mathbf{W}_n})=
	\frac{J_{ {\cal B}_\varepsilon}
	}{J_n}\geq 
	\frac{(1-\Phi_n)J_{ {\cal B}_\varepsilon}
	}{J_n}. 
	\]
	To prove the theorem, we need to show that under the assumption that $\log p_n=Cn$ where $C>0$,
	\begin{equation} \label{thm3pti}
		\inf_n{E}_{\mathbf{B}_0}\{(1-\Phi_n){\cal B}_\varepsilon\}>0,
	\end{equation}
	and 
	\begin{equation} \label{thm3ptii}
		J_n<\infty~a.s.~\textrm{ in } P_{\mathbf{B}_0}\textrm{-probability}. 
	\end{equation} 
	To prove \eqref{thm3pti}, we will use a similar argument as in the proof of part (i) of Lemma \ref{pg_prop}. On the set 
	${\cal B}^*_\varepsilon=\{\mathbf{B}_n~:~\|\mathbf{B}^S_n-\mathbf{B}^S_0\|_F > \varepsilon/4,~\forall S\in{\cal M}\}$, we have
	\begin{eqnarray*}
		&&{E}_{\mathbf{B}_n}(\Phi_n)\notag\\
		&\leq&
		\sum_{S\in {\cal M}}
		P \left(
		\|\widehat{\mathbf{B}}^S 
		-\mathbf{B}^S_0
		\|_F >\varepsilon/2,~~\|\mathbf{B}^S_n-\mathbf{B}^S_0\|_F > \varepsilon/4,~\forall S\in{\cal M}
		\mid~ \mathbf{B}_n\right)\notag\\
		&\leq&
		\sum_{S\in {\cal M}}
		P \left(
		\|\widehat{\mathbf{B}}^S 
		-\mathbf{B}^S_n
		\|_F >\varepsilon/4 \mid~ \mathbf{B}_n
		\right).
	\end{eqnarray*}
	Similar to  
	\eqref{tail_chisqr} and \eqref{eq24}, 
	\[
	\log\,
	{E}_{\mathbf{B}_n}(\Phi_n)
	\leq \log\,(m_n-s_0)+
	m_n(1+\log\,p_n)
	-\frac{n\varepsilon^2}{32k_2^2},
	\]
	where $s_0$, and $k_2$ are defined as in Lemma \ref{pg_prop}. Note that $\log\,p_n=Cn$ where $C>0$. Thus, we let $m_n$ be a positive number such that $s_0\leq m_n$ and $m_n$ satisfies that $
	\log\,(m_n-s_0)+
	m_n(1+\log p_n)-n\varepsilon^2/(32k_2^2)\in (-\infty,0)$. This implies that 
	${\rm E}_{\mathbf{B}_n}(1-\Phi_n)$ has a nonzero (constant) lower bound. Therefore, 
	\begin{eqnarray*}
		E_{\mathbf{B}_0}[(1-\Phi_n)J_{ {\cal B}_\varepsilon}]
		&\geq& E_{\mathbf{B}_0}[(1-\Phi_n)J_{ {\cal B}^*_\varepsilon}] \notag\\
		&=&
		E_{\mathbf{B}_0} \left(
		(1-\Phi_n)\int_{\mathbf{B}_n\in {\cal B}^*_\varepsilon}
		\frac{p(\mathbf{Z}_n, {\picotwo \mathbf{W}_n} \mid \mathbf{B}_n)}{p(\mathbf{Z}_n, {\picotwo \mathbf{W}_n} \mid \mathbf{B}_0) }dP(\mathbf{B}_n)
		\right)\notag\\
		&=&\int_{\mathbf{B}_n\in {\cal B}^*_\varepsilon}
		E_{\mathbf{B}_n}(1-\Phi_n)dP(\mathbf{B}_n)\notag\\
		&\geq& P(\mathbf{B}_n\in {\cal B}^*_\varepsilon)
		\inf_{\mathbf{B}_n \in {\cal B}_\varepsilon} E_{\mathbf{B}_0}(1-\Phi_n).
	\end{eqnarray*}
	Obviously, $\inf_n P(\mathbf{B}_n\in {\cal B}^*_\varepsilon)>0$. Thus, the proof of \eqref{thm3pti} is done. 
	
	In order to prove \eqref{thm3ptii}, we let $\mathbf{B}_n=( \bm{\beta}_1,\dots, \bm{\beta}_q)=(\mathbf{b}^n_1,\dots,\mathbf{b}^n_{p_n})^\top$. From \eqref{PP}, 
	the marginal prior for $p(\mathbf{b}^n_{j})$ is 
	\[
	p(\mathbf{b}^n_{j}) = \int^\infty_{0}
	(2\pi\tau_n\xi_j)^{-q/2}|\bm{\Sigma}|^{-1/2}\exp\left(
	-\frac{1}{2\xi_j\tau_n}\|\bm{\Sigma}^{-1/2}\mathbf{b}^n_j\|^2
	\right)p(\xi_j)d\xi_j,
	\]
	where $p(\xi_j)$ is the hyperprior density \eqref{global_local_prior}. We have 
	\begin{align*} \label{Jnequiv}
		& J_n = \int
		\frac{p(\mathbf{Z}_n, {\picotwo \mathbf{W}_n} \mid \mathbf{B}_n)}{p(\mathbf{Z}_n, {\picotwo \mathbf{W}_n} \mid \mathbf{B}_0)}
		dP(\mathbf{B}_n) \\
		& = \int \exp\left\{
		\log\,\frac{p(\mathbf{Z}_n, {\picotwo \mathbf{W}_n} \mid \mathbf{B}_n)}{p(\mathbf{Z}_n, {\picotwo \mathbf{W}_n} \mid \mathbf{B}_0)} \right. \\
        & \qquad \left. -\sum^{p_n}_{j=1}\left(\frac{\|\bm{\Sigma}^{-1/2} \mathbf{b}_j^n\|^2}{2\xi_j\tau_n}+
		\frac{q}{2}\log(2\pi\tau_n\xi_j)+\frac12\log\,|\bm{\Sigma}|-\log\,p(\xi_j)\right)
		\right\} d\mathbf{B}_n \\
		& \overset{\Delta}{=}\int A\,d \mathbf{B}_n. \numbereqn
	\end{align*}
	Similar to \eqref{a12}, 
	\begin{align*} \label{ratioupperbound}
		& \log\frac{p(\mathbf{Z}_n, {\picotwo \mathbf{W}_n} \mid \mathbf{B}_n)}{p(\mathbf{Z}_n, {\picotwo \mathbf{W}_n} \mid \mathbf{B}_0)} = -\frac{1}{2}\sum^n_{i=1}\left\{
		(\mathbf{z}_i-\mathbf{B}^\top_n \mathbf{x}_i)^\top(\bm{\Omega}^{-1}_i+\bm{\Sigma})^{-1}(\mathbf{z}_i-\mathbf{B}^\top_n \mathbf{x}_i)\right.\notag\\
		& \qquad \qquad \qquad \qquad \qquad \hspace{.3cm} -\left. 
		(\mathbf{z}_i-\mathbf{B}^\top_0 \mathbf{x}_i)^\top(\bm{\Omega}^{-1}_i+\bm{\Sigma})^{-1}(\mathbf{z}_i-\mathbf{B}_0^\top \mathbf{x}_i)
		\right\}\notag\\
		&\qquad \leq \frac12\sum^n_{i=1}
		{\rm tr}
		\left\{
		(\bm{\Omega}^{-1}_i+\bm{\Sigma})^{-1}
		(\mathbf{B}_n^\top \mathbf{x}_i-\mathbf{B}_0^\top \mathbf{x}_i)(\mathbf{B}_n^\top \mathbf{x}_i-\mathbf{B}^\top_0 \mathbf{x}_i)^\top\right\}\notag\\
		& \qquad \qquad +\sum^n_{i=1}
		\left[
		{\rm tr}\left\{
		(\bm{\Omega}^{-1}_i+\bm{\Sigma})^{-1}
		(\mathbf{B}_n^\top \mathbf{x}_i-\mathbf{B}_0^\top \mathbf{x}_i)(\mathbf{B}_n^\top \mathbf{x}_i-\mathbf{B}^\top_0 \mathbf{x}_i)^\top\right\}\right. \notag\\
		& \qquad \qquad \qquad \hspace{.2cm} \times\left.
		{\rm tr}\left\{
		(\bm{\Omega}^{-1}_i+\bm{\Sigma})^{-1}(\mathbf{z}_i-\mathbf{B}_0^\top \mathbf{x}_i)(\mathbf{z}_i-\mathbf{B}^\top_0 \mathbf{x}_i)^\top
		\right\}\right]^{1/2}. \numbereqn 
	\end{align*}
	Let $\kappa_n=n^{(1+\rho)/2}$ for $\rho>0$. Since  
	{\footnotesize \[
	P \left(
	\left\{ 
	{\rm tr}\sum^n_{i=1}
	(\bm{\Omega}^{-1}_i+\bm{\Sigma})^{-1}
	(\mathbf{z}_i-\mathbf{B}_0^\top \mathbf{x}_i)(\mathbf{z}_i-\mathbf{B}^\top_0 \mathbf{x}_i)^\top\right\}^{1/2}> \kappa_n 
	\mbox{~~~~infinity often}\right)=0,
	\]}
	and $\tau_n=O(p_n^{-1})$, 
	we have 
 {\footnotesize
	\begin{align*} \label{logAupperbound}
		& \log\,A  \leq \frac12\sum^n_{i=1}
		{\rm tr}
		\left\{
		(\bm{\Omega}^{-1}_i+\bm{\Sigma})^{-1}
		(\mathbf{B}_n^\top \mathbf{x}_i-\mathbf{B}_0^\top \mathbf{x}_i)(\mathbf{B}_n^\top \mathbf{x}_i-\mathbf{B}^\top_0 \mathbf{x}_i)^\top\right\}\notag\\
		& \qquad \qquad  +\left[\sum^n_{i=1}
		{\rm tr}\left\{
		(\bm{\Omega}^{-1}_i+\bm{\Sigma})^{-1}
		(\mathbf{B}_n^\top \mathbf{x}_i-\mathbf{B}_0^\top \mathbf{x}_i)(\mathbf{B}_n^\top \mathbf{x}_i-\mathbf{B}^\top_0 \mathbf{x}_i)^\top\right\}\right]^{1/2}\kappa_n \notag \\
		& \qquad \qquad  -\sum^{p_n}_{j=1}\frac{\|\bm{\Sigma}^{-1/2}b_j^n\|^2}{2\xi_j\tau_n}-
		\sum^{p_n}_{j=1}\frac{q}{2}\log(2\pi\tau_n\xi_j)-\sum^{p_n}_{j=1}\frac12\log\,|\bm{\Sigma}|+\sum^{p_n}_{j=1}\log\,p(\xi_j)\notag\\
		& \qquad = 
		O_p(np_n)+
		O_p(n^{1+\frac{\rho}{2}}p^{1/2}_n)
		-O_p(p_n^2)-O_p(p_n\log\,p_n)-O(p_n)+O(p_n)
		\notag\\
		& \qquad = -O_p(p_n^2)<0. \numbereqn
	\end{align*}}
	Note that the term $\sum^{p_n}_{j=1}\|\Sigma^{-1/2} \mathbf{b}_j^n\|^2/(2\xi_j\tau_n)$ is the dominating term of order $O_p(p_n^2)$ in \eqref{logAupperbound}. Combining \eqref{Jnequiv}-\eqref{logAupperbound}, we have that $J_n=\int A d \mathbf{B}_n$ is bounded, i.e. we have established \eqref{thm3ptii}. Combining \eqref{thm3pti}-\eqref{thm3ptii} gives that $E_{\mathbf{B}_0} P({\cal B}_\varepsilon \mid \mathbf{Z}_n, {\picotwo \mathbf{W}_n} )$ is bounded away from zero. {\picotwo To finish the proof and obtain the result conditional on $\mathbf{X}_n$ and $\mathbf{Y}_n$, we apply part (ii) of Proposition \ref{Prop:C.6} to the event ${\cal B}_{\varepsilon}$.}
\end{proof}

\begin{proof}[Proof of Corollary \ref{corollary_inconsistent}]
	By Theorem \ref{one-step_contraction_rate} and Theorem \ref{thm_inconsistent}, the one-step Mt-MBSP posterior for $\mathbf{B}_n$ is consistent when $\log p_n = o(n)$ but inconsistent if $\log p_n = C n, C > 0$. 
\end{proof}

\subsection{Proofs of Theorems \ref{thm_C} and \ref{thm_D2}}

\begin{proof}[Proof of Theorem \ref{thm_C}]
	Based on Theorem \ref{thm_D}
	and our proposed two-step procedure, we have that for $\varepsilon>0$, 
 {\footnotesize
	\begin{align*}
		\sup_{\mathbf{B}_0}E_{\mathbf{B}_0}
		P \left(\sum^{p_n}_{j=1}
		\sum^{q}_{k=1}\{(\mathbf{B}_n)_{jk}-(\mathbf{B}_0)_{jk}\}^2 > \varepsilon^2
		\frac{r_n\log p_n}{n} \mid \mathbf{Z}_n,~\mathbf{W}_n
		\right)\longrightarrow 0\mbox{~~as $n\rightarrow\infty$},
	\end{align*}}
	for $r_n\in (s_0,~n)$. 
	Thus, for each $\mathbf{B}_0$,
 {\footnotesize
	\begin{eqnarray*}
		\sup_{\mathbf{B}_0} E_{\mathbf{B}_0}
		P \left(\sum^{p_n}_{j=1}
		\sum^{q}_{k=1}\{(\mathbf{B}_n)_{jk}-(\mathbf{B}_0)_{jk}\}^2 \leq \varepsilon^2
		\frac{r_n\log p_n}{n} \mid \mathbf{Z}_n,~\mathbf{W}_n
		\right)\longrightarrow 1\mbox{~~as $n\rightarrow\infty$}.
	\end{eqnarray*}}
	Then it follows that
	\begin{align*} 	\label{s20}
		& \sup_{\mathbf{B}_0} E_{\mathbf{B}_0}
		P \left(~
		\textstyle\sum^{q}_{k=1}\{(\mathbf{B}_n)_{jk}-(\mathbf{B}_0)_{jk}\}^2 \leq \varepsilon^2
		(r_n\log p_n / n ) \right. \\
		& \left. \qquad \qquad \qquad \textrm{ for all } j\in \{1,\dots,p_n\}\mid \mathbf{Z}_n,~\mathbf{W}_n
		\right)\longrightarrow 1\mbox{~~as $n\rightarrow\infty$}.\numbereqn
	\end{align*}

{\color{black}
 \noindent We have 
    { 
	\begin{eqnarray*}
		&&\left\{
		\sum^{q}_{k=1}\left((\mathbf{B}_n)_{jk}-(\mathbf{B}_0)_{jk}\right)^2 \leq \varepsilon^2
		\frac{r_n\log p_n}{n} \textrm{ for all } j
		\right\} \notag\\
		&\bigsubset&
		\left\{
		\max_{k=1,
			\dots,q}((\mathbf{B}_n)_{jk}-(\mathbf{B}_0)_{jk}) \leq \varepsilon
		\left(\frac{r_n\log p_n}{n}\right)^{1/2} \textrm{ for all } j
		\right\} \notag\\
		&&\bigcap 
		\left\{
		\min_{k=1,
			\dots,q}((\mathbf{B}_n)_{jk}-(\mathbf{B}_0)_{jk}) \geq -\varepsilon
		\left(\frac{r_n\log p_n}{n}\right)^{1/2} \textrm{ for all } j
		\right\} \notag\\
		&\bigsubset&
		\left\{
		\min_{k=1,
			\dots,q}((\mathbf{B}_n)_{jk}-(\mathbf{B}_0)_{jk}) \leq \varepsilon
		\left(\frac{r_n\log p_n}{n}\right)^{1/2} \textrm{ for all } j
		\right\} \notag\\
		&&\bigcap 
		\left\{
		\max_{k=1,
			\dots,q}((\mathbf{B}_n)_{jk}-(\mathbf{B}_0)_{jk}) \geq -\varepsilon
		\left(\frac{r_n\log p_n}{n}\right)^{1/2} \textrm{ for all } j
		\right\}
	\end{eqnarray*}}
From \eqref{s20}, 
	\begin{align*}
		& E_{\mathbf{B}_0}
		P \left\{
		\min_{k=1,
			\dots,q}((\mathbf{B}_n)_{jk}-(\mathbf{B}_0)_{jk}) \leq \varepsilon
		\left(\frac{r_n\log p_n}{n}\right)^{1/2} \textrm{ for all } j
		\right\}
		\longrightarrow 1
	\end{align*}
	and 
	\begin{eqnarray} 
		&& E_{\mathbf{B}_0}
		P \left\{
		\max_{k=1,
			\dots,q}((\mathbf{B}_n)_{jk}-(\mathbf{B}_0)_{jk}) \geq -\varepsilon
		\left(\frac{r_n\log p_n}{n}\right)^{1/2} \textrm{ for all } j
		\right\}
		\longrightarrow 1.\notag\\	
 \label{s221_new} 
 \end{eqnarray}
Thus, 
{\footnotesize{
	\begin{align*}
		& E_{\mathbf{B}_0}
		P \left\{
		\min_{k=1,
			\dots,q}(q_{0.975}[(\mathbf{B}_n)_{jk}]-(\mathbf{B}_0)_{jk})\leq \varepsilon
		\left(\frac{r_n\log p_n}{n}\right)^{1/2} \textrm{ for all } j
		\right\}
		\longrightarrow 1
	\end{align*}}}
	and 
{\footnotesize{	\begin{eqnarray} 
		&& E_{\mathbf{B}_0}
		P \left\{
		\max_{k=1,
			\dots,q}(q_{0.025}[(\mathbf{B}_n)_{jk}]-(\mathbf{B}_0)_{jk}) \geq -\varepsilon
		\left(\frac{r_n\log p_n}{n}\right)^{1/2} \textrm{ for all } j
		\right\}
		\longrightarrow 1.\notag\\
\label{s221_new2} 	
 \end{eqnarray}}}
 By the condition that
	$\min_{j\in S_0}\max_{k=1,
		\dots,q}|(\mathbf{B}_0)_{jk}| 
	\geq \frac{c_3}{n^{\zeta}}$ for some  $2\zeta < \alpha$ so that
	$ \max_{k=1,
		\dots,q}|(\mathbf{B}_0)_{jk}| 
	\gg ( r_n\log p_n / n )^{1/2}$,~~$j\in S_0$ and $r_n\in (s_0,n)$, it then follows from \eqref{s221_new2} that 
 {\footnotesize
	\begin{align*} \label{s222new}
		&E_{\mathbf{B}_0}
		P \left(~j\in S_0  \mbox{~~implies that~~}\min_{k=1,
			\dots,q} q_{0.975}[(\mathbf{B}_n)_{jk}]\
		\leq \varepsilon \left(\frac{r_n\log p_n}{n}\right)^{1/2}
		\right.\\
		&\left. \qquad \qquad \mbox{or~~}\max_{k=1,
			\dots,q} q_{0.025}[(\mathbf{B}_n)_{jk}]\
		\geq -\varepsilon \left(\frac{r_n\log p_n}{n}\right)^{1/2}
		\mid \mathbf{Z}_n,~\mathbf{W}_n
		\right)\rightarrow 1. \numbereqn
	\end{align*}}
 This implies that ${\cal A}_n$ contains the true set $S_0$, where  ${\cal A}_n$ is defined as in \eqref{setAn}. On the other hand, 
we have 
    {
	\begin{eqnarray*}
		&&
		\left\{
		\max_{k=1,
			\dots,q}((\mathbf{B}_n)_{jk}-(\mathbf{B}_0)_{jk}) \leq \varepsilon
		\left(\frac{r_n\log p_n}{n}\right)^{1/2} \textrm{ for all } j
		\right\} \notag\\
		&&\bigcap 
		\left\{
		\min_{k=1,
			\dots,q}((\mathbf{B}_n)_{jk}-(\mathbf{B}_0)_{jk}) \geq -\varepsilon
		\left(\frac{r_n\log p_n}{n}\right)^{1/2} \textrm{ for all } j
		\right\} \notag\\
		& &\bigsubset
		\left\{
		-\varepsilon
		\left(\frac{r_n\log p_n}{n}\right)^{1/2}
		\leq 
		\max_{k=1,
			\dots,q} (\mathbf{B}_n)_{jk} 
		-\max_{k=1,
			\dots,q}(\mathbf{B}_0)_{jk} \right. \\
        && ~~~~~~~~~~~~~~ \left.   
		\leq 
		\varepsilon
		\left(\frac{r_n\log p_n}{n}\right)^{1/2} \textrm{ for all } j
		\right\}.
	\end{eqnarray*}}
	Based on \eqref{s20}, 
	\begin{align*} 		
		& E_{\mathbf{B}_0}
		P \left(
		-\varepsilon
		\left(\frac{r_n\log p_n}{n}\right)^{1/2}
		\leq 
		\max_{k=1,
			\dots,q} (\mathbf{B}_n)_{jk} 
		-\max_{k=1,
			\dots,q}(\mathbf{B}_0)_{jk} \right. \\
		& \left.  \qquad \qquad \qquad \leq 
		\varepsilon
		\left(\frac{r_n\log p_n}{n}\right)^{1/2} \textrm{ for all } j \mid \mathbf{Z}_n,~\mathbf{W}_n
		\right)
		\longrightarrow 1\notag
	\end{align*}
 so that 
 	\begin{align*} 		
		& E_{\mathbf{B}_0}
		P \left(
		-\varepsilon
		\left(\frac{r_n\log p_n}{n}\right)^{1/2}
		\leq 
		\max_{k=1,
			\dots,q} q_{0.5}[(\mathbf{B}_n)_{jk}] 
		-\max_{k=1,
			\dots,q}(\mathbf{B}_0)_{jk} \right. \\
		& \left.  \qquad \qquad \qquad \leq 
		\varepsilon
		\left(\frac{r_n\log p_n}{n}\right)^{1/2} \textrm{ for all } j \mid \mathbf{Z}_n,~\mathbf{W}_n
		\right)
		\longrightarrow 1.\notag
	\end{align*}}
 Since $ \max_{k=1,
		\dots,q}|(\mathbf{B}_0)_{jk}| 
	\gg ( r_n\log p_n / n )^{1/2}$,~~$j\in S_0$ and $r_n\in (s_0,n)$,
%
%	 {\color{red}(please remove)~By the condition that
%	$\min_{j\in S_0}\max_{k=1,
%		\dots,q}|(\mathbf{B}_0)_{jk}| 
%	\geq \frac{c_3}{n^{\zeta}}$ for some  $2\zeta < \alpha$ so that
%	$ \max_{k=1,
%		\dots,q}|(\mathbf{B}_0)_{jk}| 
%	\gg ( r_n\log p_n / n )^{1/2}$,~~$j\in S_0$ and $r_n\in (s_0,n)$, it then follows from \eqref{s221} that }
 {\footnotesize
	\begin{align*} \label{s222}
		&E_{\mathbf{B}_0}
		P \left(~j\in S_0  \mbox{~~implies that~~}\left|\max_{k=1,
			\dots,q} q_{0.5}[(\mathbf{B}_n)_{jk}]\right|
		> \varepsilon \left(\frac{r_n\log p_n}{n}\right)^{1/2}
		, \right.\\
		&\left. \qquad \qquad j\notin S_0
		\mbox{~~implies that~~}
		\left|\max_{k=1,
			\dots,q} q_{0.5}[(\mathbf{B}_n)_{jk}]\right|
		\leq \varepsilon \left(\frac{r_n\log p_n}{n}\right)^{1/2}
		\mid \mathbf{Z}_n,~\mathbf{W}_n
		\right)\rightarrow 1. \numbereqn
	\end{align*}}
	Recall that the set ${\cal J}_n$ collects the $K_n = \min \{ n-1, | \mathcal{A}_n | \} < n $ largest values among the elements in the set
	$\{|\max_{k=1,
		\dots,q} q_{0.5}[(\mathbf{B}_n)_{jk}]|\}^{p_n}_{j=1}$.
	Moreover, ${\cal J}_n$ satisfies
	${\cal J}_n\subset {\cal A}_n$ and $|{\cal J}_n|\geq |S_0|$. Therefore, it is implied  by \eqref{s222} that 
	\begin{eqnarray*}
		E_{\mathbf{B}_0}
		P \left(~j\in S_0 \mbox{~~implies that~~} j\in {\cal J}_n\mid \mathbf{Z}_n,~\mathbf{W}_n\right)\rightarrow 1. 
	\end{eqnarray*}
	{\picotwo A straightforward extension of the proof of part (i) in Proposition \ref{Prop:C.6} gives us the same result conditional on $\mathbf{X}_n$ and $\mathbf{Y}_n$.}
\end{proof}

\begin{proof}[Proof of Theorem \ref{thm_D2}]
	Let $\mathcal{J}_n^{c} = \{1, \ldots, p_n \} \setminus \mathcal{J}_n$. Let  $\widetilde{\mathbf{B}}^{\mathcal{J}_n}$ and $\widetilde{\mathbf{B}}^{\mathcal{J}_n^c}$ denote the submatrices of $\widetilde{\mathbf{B}}_n$ with row indices in $\mathcal{J}_n$ and $\mathcal{J}_n^c$ respectively, and define $\mathbf{B}_0^{\mathcal{J}_n}$ and $\mathbf{B}_0^{\mathcal{J}_n^c}$ analogously. We have that, with probability tending to one,
	\begin{align*}
		\lVert \widetilde{\mathbf{B}}_n - \mathbf{B}_0 \rVert_F^2 = \lVert \widetilde{\mathbf{B}}^{\mathcal{J}_n} - \mathbf{B}_0^{\mathcal{J}_n} \rVert_F^2 + \lVert \widetilde{\mathbf{B}}^{\mathcal{J}_n^c} - \mathbf{B}_0^{\mathcal{J}_n^c} \rVert_F^2 = \lVert \widetilde{\mathbf{B}}^{\mathcal{J}_n} - \mathbf{B}_0^{\mathcal{J}_n} \rVert_F^2,  
	\end{align*}
	where the final equality is because $\widetilde{\mathbf{B}}^{\mathcal{J}_n^c}$ is a zero matrix by construction and because by Theorem \ref{thm_C}, $\mathcal{J}_n^c \subset S_0^c$ as $n \rightarrow \infty$. Thus $\mathbf{B}_0^{\mathcal{J}_n^c}$ is also a zero matrix as $n \rightarrow \infty$. The result now follows from Theorem \ref{one-step_contraction_rate}.
\end{proof}

\section{MCMC Convergence Diagnostics} \label{S5:Diagnostics}

{\pico When we fit the Mt-MBSP model to the full data (i.e. all $p$ predictors) in Section \ref{simulation}, we ran the Gibbs sampling algorithm in Appendix \ref{S1:Gibbs} for 1100 iterations, discarding the first 100 samples as burnin. In this section, we verify that the Gibbs sampler had adequate convergence for the six numerical experiments from Section \ref{simulation}. Our analysis ensures that the superior performance of {\picotwo the} two-step estimator (reported in Table \ref{table:results}) was \emph{not} due to the algorithm failing to converge for the one-step estimator. This also confirms that for the two-step estimator, we had run Step 1 for enough iterations to obtain accurate estimates of the posterior quantiles in \eqref{setAn}-\eqref{setJn} \emph{before} running Step 2. 

To empirically verify convergence, we ran one replication for each of the six scenarios in Section \ref{simulation} when {\picotwo $n=200$ and $p=2000$}. These scenarios contained a mixture of continuous and discrete responses, and therefore, all of them utilized P\'{o}lya-gamma data augmentation. We ran the Gibbs sampler for 1100 iterations, discarding the first 100 samples as burnin. Using the \textsf{R} package \texttt{mcmcse} \cite{mcmcseRpackage}, we recorded the effective sample size (ESS) and the Monte Carlo standard error (MCSE) for the posterior median $q_{0.5}$, the posterior 0.025 quantile $q_{0.025}$, and the posterior 0.975 quantile $q_{0.975}$ for each of the regression coefficients in the $p \times q$ matrix $\mathbf{B}$. 

\begin{table} 
	\caption{{\picotwo Average ESS, maximum MCSE for $q_{0.5}$, maximum MCSE for $q_{0.025}$, and maximum MSCE for $q_{0.975}$ for all entries in the $p \times q$ regression coefficients matrix $\mathbf{B}$. These results are based on 1000 saved MCMC samples following a burnin period of 100 initial samples which were discarded. The six scenarios pertain to the six simulation settings described in Section \ref{simulation}.}}
  \begin{tabular}{|| l | c | c | c | c||} 
 \hline
  & ESS & MCSE($q_{0.5}$) & MCSE($q_{0.025}$) & MCSE($q_{0.975}$) \\ [0.25ex] 
 \hline
 Scenario 1 & {\picotwo 714.56} & {\picotwo 0.158} & {\picotwo 0.353} & {\picotwo 0.225} \\ 
 Scenario 2 & {\picotwo 868.28} & {\picotwo 0.092} & {\picotwo 0.126} & {\picotwo 0.162} \\
 Scenario 3 & {\picotwo 673.37} & {\picotwo 0.030} & {\picotwo 0.058} & {\picotwo 0.088} \\
 Scenario 4 & {\picotwo 551.06} & {\picotwo 0.111} & {\picotwo 0.392} & {\picotwo 0.478} \\
 Scenario 5 & {\picotwo 677.74} & {\picotwo 0.188} & {\picotwo 0.318} & {\picotwo 0.366} \\ 
 Scenario 6 & {\picotwo 748.50} & {\picotwo 0.022} & {\picotwo 0.090} & {\picotwo 0.071} \\ [0.5ex] 
 \hline
 \end{tabular} \label{tab:MCMC_diagnostics}
\end{table}

Table \ref{tab:MCMC_diagnostics} reports the average ESS and {\picotwo maximum} MCSE for the $pq$ entries in $\mathbf{B}$ based on the 1000 MCMC samples that were saved after 100 warmup iterations. Table \ref{tab:MCMC_diagnostics} shows an acceptable ESS (greater than {\picotwo 500} on average) in all simulation settings and an {\picotwo acceptable maximum} MCSE, indicating {\picotwo decent} precision of our MCMC estimates for $q_{0.5}$, $q_{0.025}$, and $q_{0.975}$. Figure \ref{fig:trace_plots} shows the MCMC traceplots for the 1000 iterations after burnin for six randomly chosen coefficients in $\mathbf{B}$ from each of the six scenarios. The traceplots in Figure \ref{fig:trace_plots} {\picotwo also} demonstrate superb mixing, suggesting that the Gibbs sampling algorithm had already converged after the first 100 warmup iterations.

Our analysis demonstrates that even when $p>n$ and $p$ is fairly large (here, {\picotwo $p=2000$}), we can still obtain a high quality approximation of the posterior for $\mathbf{B}$ by running just 1100 iterations of our Gibbs sampling algorithm in Appendix \ref{S1:Gibbs}. While our empirical results are very good, a rigorous theoretical analysis of the Gibbs sampler (e.g. its convergence rate or ergodicity) is desirable.}

\begin{figure}[t!]
\centering
\includegraphics[width=.9\textwidth]{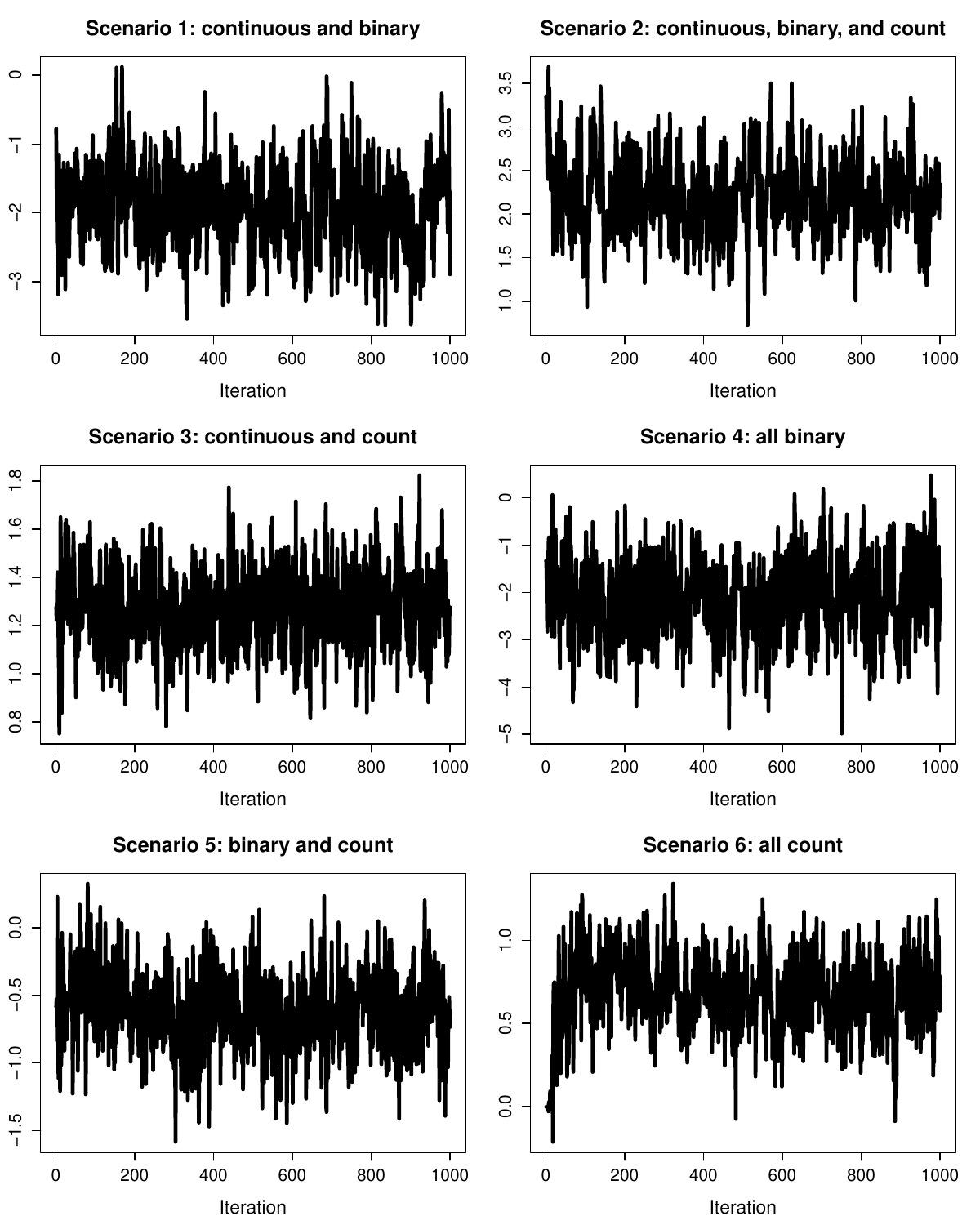} 
\caption{{\picotwo Traceplots for one regression coefficient from each of the six simulation experiments. The MCMC samples are plotted for the 1000 iterations following 100 warmup iterations.}}
\label{fig:trace_plots}
\end{figure}

% \begin{acks}[Acknowledgments]
%	This work was partially supported by National Science Foundation grants DMS-1924792 (Huang) and DMS-2015528 (Bai) and Ministry of Science and Technology grant 109-2118-M-008-005 (Wang).
% \end{acks}

%%%%%%%%%%%%%%%%%%%%%%%%%%%%%%%%%%%%%%%%%%%%%%
%% Supplementary Material, if any, should   %%
%% be provided in {supplement} environment  %%
%% with title and short description.        %%
%%%%%%%%%%%%%%%%%%%%%%%%%%%%%%%%%%%%%%%%%%%%%%
% \begin{supplement}
% \stitle{Title of Supplement A}
% \sdescription{Short description of Supplement A.}
% \end{supplement}
% \begin{supplement}
% \stitle{Title of Supplement B}
% \sdescription{Short description of Supplement B.}
% \end{supplement}

%\begin{thebibliography}{9}

\bibliographystyle{imsart-number} % Style BST file (imsart-number.bst or imsart-nameyear.bst)
\bibliography{MBSP-Reference}       % Bibliography file (usually '*.bib')

\begin{thebibliography}{89}
% BibTex style file: imsart-number.bst, 2017-11-03
% Default style options (sort=1,type=number).
% Used options (sort=1,type=number).

\bibitem{an2017simultaneous}
\begin{barticle}[author]
\bauthor{\bsnm{An},~\bfnm{Baiguo}\binits{B.}} \AND
  \bauthor{\bsnm{Zhang},~\bfnm{Beibei}\binits{B.}}
(\byear{2017}).
\btitle{Simultaneous selection of predictors and responses for high dimensional
  multivariate linear regression}.
\bjournal{Statistics \& Probability Letters}
\bvolume{127}
\bpages{173--177}.
\end{barticle}
\endbibitem

\bibitem{ArmaganClydeDunson2011}
\begin{binproceedings}[author]
\bauthor{\bsnm{Armagan},~\bfnm{Artin}\binits{A.}},
  \bauthor{\bsnm{Clyde},~\bfnm{Merlise}\binits{M.}} \AND
  \bauthor{\bsnm{Dunson},~\bfnm{David}\binits{D.}}
(\byear{2011}).
\btitle{Generalized beta mixtures of {G}aussians}.
In \bbooktitle{Advances in Neural Information Processing Systems}
(\beditor{\bfnm{J.}\binits{J.}~\bsnm{Shawe-Taylor}},
  \beditor{\bfnm{R.}\binits{R.}~\bsnm{Zemel}},
  \beditor{\bfnm{P.}\binits{P.}~\bsnm{Bartlett}},
  \beditor{\bfnm{F.}\binits{F.}~\bsnm{Pereira}} \AND
  \beditor{\bfnm{K.~Q.}\binits{K.~Q.}~\bsnm{Weinberger}}, eds.)
\bvolume{24}
\bpages{523--531}.
\end{binproceedings}
\endbibitem

\bibitem{ArmaganDunsonLee2013}
\begin{barticle}[author]
\bauthor{\bsnm{Armagan},~\bfnm{Artin}\binits{A.}},
  \bauthor{\bsnm{Dunson},~\bfnm{David~B}\binits{D.~B.}} \AND
  \bauthor{\bsnm{Lee},~\bfnm{Jaeyong}\binits{J.}}
(\byear{2013}).
\btitle{Generalized double {P}areto shrinkage}.
\bjournal{Statistica Sinica}
\bvolume{23}
\bpages{119-143}.
\end{barticle}
\endbibitem

\bibitem{BaiGhosh2018}
\begin{barticle}[author]
\bauthor{\bsnm{Bai},~\bfnm{Ray}\binits{R.}} \AND
  \bauthor{\bsnm{Ghosh},~\bfnm{Malay}\binits{M.}}
(\byear{2018}).
\btitle{High-dimensional multivariate posterior consistency under
  global–local shrinkage priors}.
\bjournal{Journal of Multivariate Analysis}
\bvolume{167}
\bpages{157-170}.
\end{barticle}
\endbibitem

\bibitem{banerjee2022adaptive}
\begin{barticle}[author]
\bauthor{\bsnm{Banerjee},~\bfnm{Kalins}\binits{K.}},
  \bauthor{\bsnm{Chen},~\bfnm{Jun}\binits{J.}} \AND
  \bauthor{\bsnm{Zhan},~\bfnm{Xiang}\binits{X.}}
(\byear{2022}).
\btitle{Adaptive and powerful microbiome multivariate association analysis via
  feature selection}.
\bjournal{NAR Genomics and Bioinformatics}
\bvolume{4}
\bpages{lqab120}.
\end{barticle}
\endbibitem

\bibitem{BelitserGhosalvanZanten2012}
\begin{barticle}[author]
\bauthor{\bsnm{Belitser},~\bfnm{Eduard}\binits{E.}},
  \bauthor{\bsnm{Ghosal},~\bfnm{Subhashis}\binits{S.}} \AND
  \bauthor{\bparticle{van} \bsnm{Zanten},~\bfnm{Harry}\binits{H.}}
(\byear{2012}).
\btitle{Optimal two-stage procedures for estimating location and size of the
  maximum of a multivariate regression function}.
\bjournal{The Annals of Statistics}
\bvolume{40}
\bpages{2850--2876}.
\end{barticle}
\endbibitem

\bibitem{BhadraISR2020}
\begin{barticle}[author]
\bauthor{\bsnm{Bhadra},~\bfnm{Anindya}\binits{A.}},
  \bauthor{\bsnm{Datta},~\bfnm{Jyotishka}\binits{J.}},
  \bauthor{\bsnm{Li},~\bfnm{Yunfan}\binits{Y.}} \AND
  \bauthor{\bsnm{Polson},~\bfnm{Nicholas}\binits{N.}}
(\byear{2020}).
\btitle{Horseshoe regularisation for machine learning in complex and deep
  models}.
\bjournal{International Statistical Review}
\bvolume{88}
\bpages{302-320}.
\end{barticle}
\endbibitem

\bibitem{BhadraDattaPolsonWillard2017}
\begin{barticle}[author]
\bauthor{\bsnm{Bhadra},~\bfnm{Anindya}\binits{A.}},
  \bauthor{\bsnm{Datta},~\bfnm{Jyotishka}\binits{J.}},
  \bauthor{\bsnm{Polson},~\bfnm{Nicholas~G.}\binits{N.~G.}} \AND
  \bauthor{\bsnm{Willard},~\bfnm{Brandon}\binits{B.}}
(\byear{2017}).
\btitle{The horseshoe+ estimator of ultra-sparse signals}.
\bjournal{Bayesian Analysis}
\bvolume{12}
\bpages{1105 -- 1131}.
\end{barticle}
\endbibitem

\bibitem{bhattacharya2016fast}
\begin{barticle}[author]
\bauthor{\bsnm{Bhattacharya},~\bfnm{Anirban}\binits{A.}},
  \bauthor{\bsnm{Chakraborty},~\bfnm{Antik}\binits{A.}} \AND
  \bauthor{\bsnm{Mallick},~\bfnm{Bani~K}\binits{B.~K.}}
(\byear{2016}).
\btitle{Fast sampling with {G}aussian scale mixture priors in high-dimensional
  regression}.
\bjournal{Biometrika}
\bvolume{103}
\bpages{985–991}.
\end{barticle}
\endbibitem

\bibitem{BhattacharyaPatiPillaiDunson2015}
\begin{barticle}[author]
\bauthor{\bsnm{Bhattacharya},~\bfnm{Anirban}\binits{A.}},
  \bauthor{\bsnm{Pati},~\bfnm{Debdeep}\binits{D.}},
  \bauthor{\bsnm{Pillai},~\bfnm{Natesh~S.}\binits{N.~S.}} \AND
  \bauthor{\bsnm{Dunson},~\bfnm{David~B.}\binits{D.~B.}}
(\byear{2015}).
\btitle{Dirichlet–Laplace priors for optimal shrinkage}.
\bjournal{Journal of the American Statistical Association}
\bvolume{110}
\bpages{1479-1490}.
\end{barticle}
\endbibitem

\bibitem{BhattacharyaKharePal2022}
\begin{barticle}[author]
\bauthor{\bsnm{Bhattacharya},~\bfnm{Suman}\binits{S.}},
  \bauthor{\bsnm{Khare},~\bfnm{Kshitij}\binits{K.}} \AND
  \bauthor{\bsnm{Pal},~\bfnm{Subhadip}\binits{S.}}
(\byear{2022}).
\btitle{{Geometric ergodicity of Gibbs samplers for the horseshoe and its
  regularized variants}}.
\bjournal{Electronic Journal of Statistics}
\bvolume{16}
\bpages{1 -- 57}.
\end{barticle}
\endbibitem

\bibitem{BradleyBA2022}
\begin{barticle}[author]
\bauthor{\bsnm{Bradley},~\bfnm{Jonathan~R.}\binits{J.~R.}}
(\byear{2022}).
\btitle{Joint {B}ayesian analysis of multiple response-types using the
  hierarchical generalized transformation model}.
\bjournal{Bayesian Analysis}
\bvolume{17}
\bpages{127 -- 164}.
\end{barticle}
\endbibitem

\bibitem{cai2007selecting}
\begin{barticle}[author]
\bauthor{\bsnm{Cai},~\bfnm{Zhipeng}\binits{Z.}},
  \bauthor{\bsnm{Goebel},~\bfnm{Randy}\binits{R.}},
  \bauthor{\bsnm{Salavatipour},~\bfnm{Mohammad~R}\binits{M.~R.}} \AND
  \bauthor{\bsnm{Lin},~\bfnm{Guohui}\binits{G.}}
(\byear{2007}).
\btitle{Selecting dissimilar genes for multi-class classification, an
  application in cancer subtyping}.
\bjournal{BMC Bioinformatics}
\bvolume{8}
\bpages{1--15}.
\end{barticle}
\endbibitem

\bibitem{camastra2005novel}
\begin{barticle}[author]
\bauthor{\bsnm{Camastra},~\bfnm{Francesco}\binits{F.}} \AND
  \bauthor{\bsnm{Verri},~\bfnm{Alessandro}\binits{A.}}
(\byear{2005}).
\btitle{A novel kernel method for clustering}.
\bjournal{IEEE Transactions on Pattern Analysis and Machine Intelligence}
\bvolume{27}
\bpages{801--805}.
\end{barticle}
\endbibitem

\bibitem{CaoKhareGhosh2020}
\begin{barticle}[author]
\bauthor{\bsnm{Cao},~\bfnm{Xuan}\binits{X.}},
  \bauthor{\bsnm{Khare},~\bfnm{Kshitij}\binits{K.}} \AND
  \bauthor{\bsnm{Ghosh},~\bfnm{Malay}\binits{M.}}
(\byear{2020}).
\btitle{High-dimensional posterior consistency for hierarchical non-local
  priors in regression}.
\bjournal{Bayesian Analysis}
\bvolume{15}
\bpages{241 -- 262}.
\end{barticle}
\endbibitem

\bibitem{CarvahoPolsonScott2010}
\begin{barticle}[author]
\bauthor{\bsnm{Carvalho},~\bfnm{Carlos~M.}\binits{C.~M.}},
  \bauthor{\bsnm{Polson},~\bfnm{Nicholas~G.}\binits{N.~G.}} \AND
  \bauthor{\bsnm{Scott},~\bfnm{James~G.}\binits{J.~G.}}
(\byear{2010}).
\btitle{The horseshoe estimator for sparse signals}.
\bjournal{Biometrika}
\bvolume{97}
\bpages{465-480}.
\end{barticle}
\endbibitem

\bibitem{chakraborty2020bayesian}
\begin{barticle}[author]
\bauthor{\bsnm{Chakraborty},~\bfnm{Antik}\binits{A.}},
  \bauthor{\bsnm{Bhattacharya},~\bfnm{Anirban}\binits{A.}} \AND
  \bauthor{\bsnm{Mallick},~\bfnm{Bani~K}\binits{B.~K.}}
(\byear{2020}).
\btitle{Bayesian sparse multiple regression for simultaneous rank reduction and
  variable selection}.
\bjournal{Biometrika}
\bvolume{107}
\bpages{205--221}.
\end{barticle}
\endbibitem

\bibitem{ChakrabortyGhosal2021}
\begin{barticle}[author]
\bauthor{\bsnm{Chakraborty},~\bfnm{Moumita}\binits{M.}} \AND
  \bauthor{\bsnm{Ghosal},~\bfnm{Subhashis}\binits{S.}}
(\byear{2021}).
\btitle{Bayesian inference on monotone regression quantile: coverage and rate
  acceleration}.
\bjournal{preprint}.
\end{barticle}
\endbibitem

\bibitem{chen2013variable}
\begin{barticle}[author]
\bauthor{\bsnm{Chen},~\bfnm{Jun}\binits{J.}} \AND
  \bauthor{\bsnm{Li},~\bfnm{Hongzhe}\binits{H.}}
(\byear{2013}).
\btitle{Variable selection for sparse {D}irichlet-multinomial regression with
  an application to microbiome data analysis}.
\bjournal{The Annals of Applied Statistics}
\bvolume{7}
\bpages{418-442}.
\end{barticle}
\endbibitem

\bibitem{ChenHuang2012}
\begin{barticle}[author]
\bauthor{\bsnm{Chen},~\bfnm{Lisha}\binits{L.}} \AND
  \bauthor{\bsnm{Huang},~\bfnm{Jianhua~Z.}\binits{J.~Z.}}
(\byear{2012}).
\btitle{Sparse reduced-rank regression for simultaneous dimension reduction and
  variable selection}.
\bjournal{Journal of the American Statistical Association}
\bvolume{107}
\bpages{1533--1545}.
\end{barticle}
\endbibitem

\bibitem{chen2022bayesian}
\begin{barticle}[author]
\bauthor{\bsnm{Chen},~\bfnm{Xiaoyu}\binits{X.}},
  \bauthor{\bsnm{Kang},~\bfnm{Xiaoning}\binits{X.}},
  \bauthor{\bsnm{Jin},~\bfnm{Ran}\binits{R.}} \AND
  \bauthor{\bsnm{Deng},~\bfnm{Xinwei}\binits{X.}}
(\byear{2023}).
\btitle{Bayesian sparse regression for mixed multi-responses with application
  to runtime metrics prediction in fog manufacturing}.
\bjournal{Technometrics}
\bvolume{65}
\bpages{206-219}.
\end{barticle}
\endbibitem

\bibitem{ChoiHobert2013}
\begin{barticle}[author]
\bauthor{\bsnm{Choi},~\bfnm{Hee~Min}\binits{H.~M.}} \AND
  \bauthor{\bsnm{Hobert},~\bfnm{James~P.}\binits{J.~P.}}
(\byear{2013}).
\btitle{The {P}olya-{G}amma {G}ibbs sampler for {B}ayesian logistic regression
  is uniformly ergodic}.
\bjournal{Electronic Journal of Statistics}
\bvolume{7}
\bpages{2054 -- 2064}.
\end{barticle}
\endbibitem

\bibitem{ChunKeles2010}
\begin{barticle}[author]
\bauthor{\bsnm{Chun},~\bfnm{Hyonho}\binits{H.}} \AND
  \bauthor{\bsnm{Kele\c{s}},~\bfnm{S\"{u}nd\"{u}z}\binits{S.}}
(\byear{2010}).
\btitle{Sparse partial least squares regression for simultaneous dimension
  reduction and variable selection}.
\bjournal{Journal of the Royal Statistical Society Series B: Statistical
  Methodology}
\bvolume{72}
\bpages{3-25}.
\end{barticle}
\endbibitem

\bibitem{CoxWermuth1992}
\begin{barticle}[author]
\bauthor{\bsnm{Cox},~\bfnm{D.~R.}\binits{D.~R.}} \AND
  \bauthor{\bsnm{Wermuth},~\bfnm{Nanny}\binits{N.}}
(\byear{1992}).
\btitle{Response models for mixed binary and quantitative variables}.
\bjournal{Biometrika}
\bvolume{79}
\bpages{441--461}.
\end{barticle}
\endbibitem

\bibitem{DengJin2015}
\begin{barticle}[author]
\bauthor{\bsnm{Deng},~\bfnm{Xinwei}\binits{X.}} \AND
  \bauthor{\bsnm{Jin},~\bfnm{Ran}\binits{R.}}
(\byear{2015}).
\btitle{{QQ} Mmdels: Joint modeling for quantitative and qualitative quality
  responses in manufacturing systems}.
\bjournal{Technometrics}
\bvolume{57}
\bpages{320-331}.
\end{barticle}
\endbibitem

\bibitem{deshpande2019simultaneous}
\begin{barticle}[author]
\bauthor{\bsnm{Deshpande},~\bfnm{Sameer~K}\binits{S.~K.}},
  \bauthor{\bsnm{Ro{\v{c}}kov{\'a}},~\bfnm{Veronika}\binits{V.}} \AND
  \bauthor{\bsnm{George},~\bfnm{Edward~I}\binits{E.~I.}}
(\byear{2019}).
\btitle{Simultaneous variable and covariance selection with the multivariate
  spike-and-slab lasso}.
\bjournal{Journal of Computational and Graphical Statistics}
\bvolume{28}
\bpages{921--931}.
\end{barticle}
\endbibitem

\bibitem{DunsonJRSSB2000}
\begin{barticle}[author]
\bauthor{\bsnm{Dunson},~\bfnm{David~B.}\binits{D.~B.}}
(\byear{2000}).
\btitle{Bayesian latent variable models for clustered mixed outcomes}.
\bjournal{Journal of the Royal Statistical Society. Series B (Statistical
  Methodology)}
\bvolume{62}
\bpages{355--366}.
\end{barticle}
\endbibitem

\bibitem{EbiaredohSEM2022}
\begin{barticle}[author]
\bauthor{\bsnm{Ebiaredoh-Mienye},~\bfnm{S.~A.}\binits{S.~A.}},
  \bauthor{\bsnm{Swart},~\bfnm{T.~G.}\binits{T.~G.}},
  \bauthor{\bsnm{Esenogho},~\bfnm{E.}\binits{E.}} \AND
  \bauthor{\bsnm{Mienye},~\bfnm{I.~D.}\binits{I.~D.}}
(\byear{2022}).
\btitle{A machine learning method with filter-based feature selection for
  improved prediction of chronic kidney disease}.
\bjournal{Bioengineering}
\bvolume{9}
\bpages{350}.
\end{barticle}
\endbibitem

\bibitem{fan2008sure}
\begin{barticle}[author]
\bauthor{\bsnm{Fan},~\bfnm{Jianqing}\binits{J.}} \AND
  \bauthor{\bsnm{Lv},~\bfnm{Jinchi}\binits{J.}}
(\byear{2008}).
\btitle{Sure independence screening for ultrahigh dimensional feature space}.
\bjournal{Journal of the Royal Statistical Society: Series B (Statistical
  Methodology)}
\bvolume{70}
\bpages{849--911}.
\end{barticle}
\endbibitem

\bibitem{FanZhang1999}
\begin{barticle}[author]
\bauthor{\bsnm{Fan},~\bfnm{Jianqing}\binits{J.}} \AND
  \bauthor{\bsnm{Zhang},~\bfnm{Wenyang}\binits{W.}}
(\byear{1999}).
\btitle{{Statistical estimation in varying coefficient models}}.
\bjournal{The Annals of Statistics}
\bvolume{27}
\bpages{1491 -- 1518}.
\end{barticle}
\endbibitem

\bibitem{FitzmauriceJASA1995}
\begin{barticle}[author]
\bauthor{\bsnm{Fitzmaurice},~\bfnm{Garrett~M.}\binits{G.~M.}} \AND
  \bauthor{\bsnm{Laird},~\bfnm{Nan~M.}\binits{N.~M.}}
(\byear{1995}).
\btitle{Regression models for a bivariate discrete and continuous outcome with
  clustering}.
\bjournal{Journal of the American Statistical Association}
\bvolume{90}
\bpages{845--852}.
\end{barticle}
\endbibitem

\bibitem{mcmcseRpackage}
\begin{bmanual}[author]
\bauthor{\bsnm{Flegal},~\bfnm{James~M.}\binits{J.~M.}},
  \bauthor{\bsnm{Hughes},~\bfnm{John}\binits{J.}},
  \bauthor{\bsnm{Vats},~\bfnm{Dootika}\binits{D.}},
  \bauthor{\bsnm{Dai},~\bfnm{Ning}\binits{N.}},
  \bauthor{\bsnm{Gupta},~\bfnm{Kushagra}\binits{K.}} \AND
  \bauthor{\bsnm{Maji},~\bfnm{Uttiya}\binits{U.}}
(\byear{2021}).
\btitle{mcmcse: Monte Carlo Standard Errors for MCMC}
\bnote{R package version 1.5-0}.
\end{bmanual}
\endbibitem

\bibitem{GelmanHwangVehtari2014}
\begin{barticle}[author]
\bauthor{\bsnm{Gelman},~\bfnm{Andrew}\binits{A.}},
  \bauthor{\bsnm{Hwang},~\bfnm{Jessica}\binits{J.}} \AND
  \bauthor{\bsnm{Vehtari},~\bfnm{Aki}\binits{A.}}
(\byear{2014}).
\btitle{Understanding predictive information criteria for {B}ayesian models}.
\bjournal{Statistics and Computing}
\bvolume{24}
\bpages{997–1016}.
\end{barticle}
\endbibitem

\bibitem{GhoshKhareMichailidis2019}
\begin{barticle}[author]
\bauthor{\bsnm{Ghosh},~\bfnm{Satyajit}\binits{S.}},
  \bauthor{\bsnm{Khare},~\bfnm{Kshitij}\binits{K.}} \AND
  \bauthor{\bsnm{Michailidis},~\bfnm{George}\binits{G.}}
(\byear{2019}).
\btitle{High-dimensional posterior consistency in {B}ayesian vector
  autoregressive models}.
\bjournal{Journal of the American Statistical Association}
\bvolume{114}
\bpages{735-748}.
\end{barticle}
\endbibitem

\bibitem{GohDeyChen2017}
\begin{barticle}[author]
\bauthor{\bsnm{Goh},~\bfnm{Gyuhyeong}\binits{G.}},
  \bauthor{\bsnm{Dey},~\bfnm{Dipak~K.}\binits{D.~K.}} \AND
  \bauthor{\bsnm{Chen},~\bfnm{Kun}\binits{K.}}
(\byear{2017}).
\btitle{Bayesian sparse reduced rank multivariate regression}.
\bjournal{Journal of Multivariate Analysis}
\bvolume{157}
\bpages{14-28}.
\end{barticle}
\endbibitem

\bibitem{GriffinBrown2013}
\begin{barticle}[author]
\bauthor{\bsnm{Griffin},~\bfnm{Jim~E.}\binits{J.~E.}} \AND
  \bauthor{\bsnm{Brown},~\bfnm{Philip.~J.}\binits{P.~J.}}
(\byear{2013}).
\btitle{Some priors for sparse regression modelling}.
\bjournal{Bayesian Analysis}
\bvolume{8}
\bpages{691--702}.
\end{barticle}
\endbibitem

\bibitem{GriffinBiometrika2020}
\begin{barticle}[author]
\bauthor{\bsnm{Griffin},~\bfnm{J~E}\binits{J.~E.}},
  \bauthor{\bsnm{Łatuszyński},~\bfnm{K~G}\binits{K.~G.}} \AND
  \bauthor{\bsnm{Steel},~\bfnm{M~F~J}\binits{M.~F.~J.}}
(\byear{2020}).
\btitle{{In search of lost mixing time: adaptive Markov chain Monte Carlo
  schemes for Bayesian variable selection with very large p}}.
\bjournal{Biometrika}
\bvolume{108}
\bpages{53-69}.
\end{barticle}
\endbibitem

\bibitem{HeXuKang2019}
\begin{barticle}[author]
\bauthor{\bsnm{He},~\bfnm{Kevin}\binits{K.}},
  \bauthor{\bsnm{Xu},~\bfnm{Han}\binits{H.}} \AND
  \bauthor{\bsnm{Kang},~\bfnm{Jian}\binits{J.}}
(\byear{2019}).
\btitle{A selective overview of feature screening methods with applications to
  neuroimaging data}.
\bjournal{WIREs Computational Statistics}
\bvolume{11}
\bpages{e1454}.
\end{barticle}
\endbibitem

\bibitem{he2024framework}
\begin{barticle}[author]
\bauthor{\bsnm{He},~\bfnm{Qing}\binits{Q.}} \AND
  \bauthor{\bsnm{Huang},~\bfnm{Hsin-Hsiung}\binits{H.-H.}}
(\byear{2024}).
\btitle{A framework of zero-inflated bayesian negative binomial regression
  models for spatiotemporal data}.
\bjournal{Journal of Statistical Planning and Inference}
\bvolume{229}
\bpages{106098}.
\end{barticle}
\endbibitem

\bibitem{HwangPennell2014}
\begin{barticle}[author]
\bauthor{\bsnm{Hwang},~\bfnm{Beom~Seuk}\binits{B.~S.}} \AND
  \bauthor{\bsnm{Pennell},~\bfnm{Michael~L.}\binits{M.~L.}}
(\byear{2014}).
\btitle{Semiparametric {B}ayesian joint modeling of a binary and continuous
  outcome with applications in toxicological risk assessment}.
\bjournal{Statistics in Medicine}
\bvolume{33}
\bpages{1162-1175}.
\end{barticle}
\endbibitem

\bibitem{jackman2009bayesian}
\begin{bbook}[author]
\bauthor{\bsnm{Jackman},~\bfnm{Simon}\binits{S.}}
(\byear{2009}).
\btitle{Bayesian Analysis for the Social Sciences}.
\bpublisher{John Wiley \& Sons}.
\end{bbook}
\endbibitem

\bibitem{KangJQT2018}
\begin{barticle}[author]
\bauthor{\bsnm{Kang},~\bfnm{Lulu}\binits{L.}},
  \bauthor{\bsnm{Kang},~\bfnm{Xiaoning}\binits{X.}},
  \bauthor{\bsnm{Deng},~\bfnm{Xinwei}\binits{X.}} \AND
  \bauthor{\bsnm{Jin},~\bfnm{Ran}\binits{R.}}
(\byear{2018}).
\btitle{A {B}ayesian hierarchical model for quantitative and qualitative
  responses}.
\bjournal{Journal of Quality Technology}
\bvolume{50}
\bpages{290-308}.
\end{barticle}
\endbibitem

\bibitem{kang2020multivariate}
\begin{barticle}[author]
\bauthor{\bsnm{Kang},~\bfnm{Xiaoning}\binits{X.}},
  \bauthor{\bsnm{Chen},~\bfnm{Xiaoyu}\binits{X.}},
  \bauthor{\bsnm{Jin},~\bfnm{Ran}\binits{R.}},
  \bauthor{\bsnm{Wu},~\bfnm{Hao}\binits{H.}} \AND
  \bauthor{\bsnm{Deng},~\bfnm{Xinwei}\binits{X.}}
(\byear{2021}).
\btitle{Multivariate regression of mixed responses for evaluation of
  visualization designs}.
\bjournal{IISE Transactions}
\bvolume{53}
\bpages{313--325}.
\end{barticle}
\endbibitem

\bibitem{kharesu2022}
\begin{barticle}[author]
\bauthor{\bsnm{Khare},~\bfnm{Kshitij}\binits{K.}} \AND
  \bauthor{\bsnm{Su},~\bfnm{Zhihua}\binits{Z.}}
(\byear{2023}).
\btitle{Response variable selection in multivariate linear regression}.
\bjournal{Statistica Sinica (to appear)}.
\end{barticle}
\endbibitem

\bibitem{KonerWilliams2023}
\begin{barticle}[author]
\bauthor{\bsnm{Koner},~\bfnm{Salil}\binits{S.}} \AND
  \bauthor{\bsnm{Williams},~\bfnm{Jonathan~P.}\binits{J.~P.}}
(\byear{2023}).
\btitle{{The EAS approach to variable selection for multivariate response data
  in high-dimensional settings}}.
\bjournal{Electronic Journal of Statistics}
\bvolume{17}
\bpages{1947 -- 1995}.
\end{barticle}
\endbibitem

\bibitem{KuchibhotlaKolassaKuffner2022}
\begin{barticle}[author]
\bauthor{\bsnm{Kuchibhotla},~\bfnm{Arun~K.}\binits{A.~K.}},
  \bauthor{\bsnm{Kolassa},~\bfnm{John~E.}\binits{J.~E.}} \AND
  \bauthor{\bsnm{Kuffner},~\bfnm{Todd~A.}\binits{T.~A.}}
(\byear{2022}).
\btitle{Post-Selection Inference}.
\bjournal{Annual Review of Statistics and Its Application}
\bvolume{9}
\bpages{505-527}.
\end{barticle}
\endbibitem

\bibitem{KunduMitraGaskins2021}
\begin{barticle}[author]
\bauthor{\bsnm{Kundu},~\bfnm{Debamita}\binits{D.}},
  \bauthor{\bsnm{Mitra},~\bfnm{Riten}\binits{R.}} \AND
  \bauthor{\bsnm{Gaskins},~\bfnm{Jeremy~T.}\binits{J.~T.}}
(\byear{2021}).
\btitle{Bayesian variable selection for multioutcome models through shared
  shrinkage}.
\bjournal{Scandinavian Journal of Statistics}
\bvolume{48}
\bpages{295-320}.
\end{barticle}
\endbibitem

\bibitem{LahiriAOS2021}
\begin{barticle}[author]
\bauthor{\bsnm{Lahiri},~\bfnm{Soumendra~N.}\binits{S.~N.}}
(\byear{2021}).
\btitle{{Necessary and sufficient conditions for variable selection consistency
  of the LASSO in high dimensions}}.
\bjournal{The Annals of Statistics}
\bvolume{49}
\bpages{820 -- 844}.
\end{barticle}
\endbibitem

\bibitem{LiDuttaRoy2023}
\begin{barticle}[author]
\bauthor{\bsnm{Li},~\bfnm{Dongjin}\binits{D.}},
  \bauthor{\bsnm{Dutta},~\bfnm{Somak}\binits{S.}} \AND
  \bauthor{\bsnm{Roy},~\bfnm{Vivekananda}\binits{V.}}
(\byear{2023}).
\btitle{Model based screening embedded {B}ayesian variable selection for
  ultra-high dimensional settings}.
\bjournal{Journal of Computational and Graphical Statistics}
\bvolume{32}
\bpages{61-73}.
\end{barticle}
\endbibitem

\bibitem{LiDattaCraigBhadra2021}
\begin{barticle}[author]
\bauthor{\bsnm{Li},~\bfnm{Yunfan}\binits{Y.}},
  \bauthor{\bsnm{Datta},~\bfnm{Jyotishka}\binits{J.}},
  \bauthor{\bsnm{Craig},~\bfnm{Bruce~A.}\binits{B.~A.}} \AND
  \bauthor{\bsnm{Bhadra},~\bfnm{Anindya}\binits{A.}}
(\byear{2021}).
\btitle{Joint mean–covariance estimation via the horseshoe}.
\bjournal{Journal of Multivariate Analysis}
\bvolume{183}
\bpages{104716}.
\end{barticle}
\endbibitem

\bibitem{LiNanZhu2015}
\begin{barticle}[author]
\bauthor{\bsnm{Li},~\bfnm{Yanming}\binits{Y.}},
  \bauthor{\bsnm{Nan},~\bfnm{Bin}\binits{B.}} \AND
  \bauthor{\bsnm{Zhu},~\bfnm{Ji}\binits{J.}}
(\byear{2015}).
\btitle{Multivariate sparse group lasso for the multivariate multiple linear
  regression with an arbitrary group structure}.
\bjournal{Biometrics}
\bvolume{71}
\bpages{354-363}.
\end{barticle}
\endbibitem

\bibitem{LindermanNIPS2015}
\begin{binproceedings}[author]
\bauthor{\bsnm{Linderman},~\bfnm{Scott~W.}\binits{S.~W.}},
  \bauthor{\bsnm{Johnson},~\bfnm{Matthew~J.}\binits{M.~J.}} \AND
  \bauthor{\bsnm{Adams},~\bfnm{Ryan~P.}\binits{R.~P.}}
(\byear{2015}).
\btitle{Dependent multinomial models made easy: Stick-breaking with the
  {P}{\'o}lya-gamma augmentation}.
In \bbooktitle{Advances in Neural Information Processing Systems 28: Annual
  Conference on Neural Information Processing Systems 2015, December 7-12,
  2015, Montreal, Quebec, Canada}
(\beditor{\bfnm{Corinna}\binits{C.}~\bsnm{Cortes}},
  \beditor{\bfnm{Neil~D.}\binits{N.~D.}~\bsnm{Lawrence}},
  \beditor{\bfnm{Daniel~D.}\binits{D.~D.}~\bsnm{Lee}},
  \beditor{\bfnm{Masashi}\binits{M.}~\bsnm{Sugiyama}} \AND
  \beditor{\bfnm{Roman}\binits{R.}~\bsnm{Garnett}}, eds.)
\bpages{3456--3464}.
\end{binproceedings}
\endbibitem

\bibitem{LiquetMengersonPettittSutton2017}
\begin{barticle}[author]
\bauthor{\bsnm{Liquet},~\bfnm{B.}\binits{B.}},
  \bauthor{\bsnm{Mengersen},~\bfnm{K.}\binits{K.}},
  \bauthor{\bsnm{Pettitt},~\bfnm{A.~N.}\binits{A.~N.}} \AND
  \bauthor{\bsnm{Sutton},~\bfnm{M.}\binits{M.}}
(\byear{2017}).
\btitle{{B}ayesian variable selection regression of multivariate responses for
  group data}.
\bjournal{Bayesian Analysis}
\bvolume{12}
\bpages{1039--1067}.
\end{barticle}
\endbibitem

\bibitem{Matthews1975}
\begin{barticle}[author]
\bauthor{\bsnm{Matthews},~\bfnm{B.~W.}\binits{B.~W.}}
(\byear{1975}).
\btitle{Comparison of the predicted and observed secondary structure of T4
  phage lysozyme}.
\bjournal{Biochimica et Biophysica Acta (BBA) - Protein Structure}
\bvolume{405}
\bpages{442-451}.
\end{barticle}
\endbibitem

\bibitem{McCulloch2008}
\begin{barticle}[author]
\bauthor{\bsnm{McCulloch},~\bfnm{Charles}\binits{C.}}
(\byear{2008}).
\btitle{Joint modelling of mixed outcome types using latent variables}.
\bjournal{Statistical Methods in Medical Research}
\bvolume{17}
\bpages{53-73}.
\end{barticle}
\endbibitem

\bibitem{narisetty2014bayesian}
\begin{barticle}[author]
\bauthor{\bsnm{Narisetty},~\bfnm{Naveen~Naidu}\binits{N.~N.}} \AND
  \bauthor{\bsnm{He},~\bfnm{Xuming}\binits{X.}}
(\byear{2014}).
\btitle{Bayesian variable selection with shrinking and diffusing priors}.
\bjournal{The Annals of Statistics}
\bvolume{42}
\bpages{789--817}.
\end{barticle}
\endbibitem

\bibitem{neal2011mcmc}
\begin{barticle}[author]
\bauthor{\bsnm{Neal},~\bfnm{Radford~M}\binits{R.~M.}}
(\byear{2011}).
\btitle{{MCMC} using {H}amiltonian dynamics}.
\bjournal{Handbook of {M}arkov {C}hain {M}onte {C}arlo}
\bvolume{2}
\bpages{113-162}.
\end{barticle}
\endbibitem

\bibitem{ning2020bayesian}
\begin{barticle}[author]
\bauthor{\bsnm{Ning},~\bfnm{Bo}\binits{B.}},
  \bauthor{\bsnm{Jeong},~\bfnm{Seonghyun}\binits{S.}} \AND
  \bauthor{\bsnm{Ghosal},~\bfnm{Subhashis}\binits{S.}}
(\byear{2020}).
\btitle{Bayesian linear regression for multivariate responses under group
  sparsity}.
\bjournal{Bernoulli}
\bvolume{26}
\bpages{2353--2382}.
\end{barticle}
\endbibitem

\bibitem{PalKhare2014}
\begin{barticle}[author]
\bauthor{\bsnm{Pal},~\bfnm{Subhadip}\binits{S.}} \AND
  \bauthor{\bsnm{Khare},~\bfnm{Kshitij}\binits{K.}}
(\byear{2014}).
\btitle{{Geometric ergodicity for Bayesian shrinkage models}}.
\bjournal{Electronic Journal of Statistics}
\bvolume{8}
\bpages{604 -- 645}.
\end{barticle}
\endbibitem

\bibitem{PalKhareHobert2017}
\begin{barticle}[author]
\bauthor{\bsnm{Pal},~\bfnm{Subahdip}\binits{S.}},
  \bauthor{\bsnm{Khare},~\bfnm{Kshitij}\binits{K.}} \AND
  \bauthor{\bsnm{Hobert},~\bfnm{James~P.}\binits{J.~P.}}
(\byear{2017}).
\btitle{Trace class {M}arkov chains for {B}ayesian inference with generalized
  double Pareto shrinkage priors}.
\bjournal{Scandinavian Journal of Statistics}
\bvolume{44}
\bpages{307-323}.
\end{barticle}
\endbibitem

\bibitem{polson2013bayesian}
\begin{barticle}[author]
\bauthor{\bsnm{Polson},~\bfnm{Nicholas~G}\binits{N.~G.}},
  \bauthor{\bsnm{Scott},~\bfnm{James~G}\binits{J.~G.}} \AND
  \bauthor{\bsnm{Windle},~\bfnm{Jesse}\binits{J.}}
(\byear{2013}).
\btitle{Bayesian inference for logistic models using {P}{\'o}lya--Gamma latent
  variables}.
\bjournal{Journal of the American Statistical Association}
\bvolume{108}
\bpages{1339--1349}.
\end{barticle}
\endbibitem

\bibitem{ReganCatalano1999}
\begin{barticle}[author]
\bauthor{\bsnm{Regan},~\bfnm{Meredith~M.}\binits{M.~M.}} \AND
  \bauthor{\bsnm{Catalano},~\bfnm{Paul~J.}\binits{P.~J.}}
(\byear{1999}).
\btitle{Likelihood models for clustered binary and continuous outcomes:
  Application to developmental toxicology}.
\bjournal{Biometrics}
\bvolume{55}
\bpages{760-768}.
\end{barticle}
\endbibitem

\bibitem{RobertsRosenthal1998}
\begin{barticle}[author]
\bauthor{\bsnm{Roberts},~\bfnm{Gareth~O.}\binits{G.~O.}} \AND
  \bauthor{\bsnm{Rosenthal},~\bfnm{Jeffrey~S.}\binits{J.~S.}}
(\byear{1998}).
\btitle{Markov-chain Monte Carlo: Some practical implications of theoretical
  results}.
\bjournal{The Canadian Journal of Statistics}
\bvolume{26}
\bpages{5--20}.
\end{barticle}
\endbibitem

\bibitem{Rockova2018}
\begin{barticle}[author]
\bauthor{\bsnm{Ročkov{\'a}},~\bfnm{Veronika}\binits{V.}}
(\byear{2018}).
\btitle{{Bayesian estimation of sparse signals with a continuous spike-and-slab
  prior}}.
\bjournal{The Annals of Statistics}
\bvolume{46}
\bpages{401 -- 437}.
\end{barticle}
\endbibitem

\bibitem{RueINLA2009}
\begin{barticle}[author]
\bauthor{\bsnm{Rue},~\bfnm{Håvard}\binits{H.}},
  \bauthor{\bsnm{Martino},~\bfnm{Sara}\binits{S.}} \AND
  \bauthor{\bsnm{Chopin},~\bfnm{Nicolas}\binits{N.}}
(\byear{2009}).
\btitle{Approximate {B}ayesian inference for latent {G}aussian models by using
  integrated nested {L}aplace approximations}.
\bjournal{Journal of the Royal Statistical Society: Series B (Statistical
  Methodology)}
\bvolume{71}
\bpages{319-392}.
\end{barticle}
\endbibitem

\bibitem{salman2018impact}
\begin{barticle}[author]
\bauthor{\bsnm{Salman},~\bfnm{Ihsan}\binits{I.}},
  \bauthor{\bsnm{Ucan},~\bfnm{Osman~N}\binits{O.~N.}},
  \bauthor{\bsnm{Bayat},~\bfnm{Oguz}\binits{O.}} \AND
  \bauthor{\bsnm{Shaker},~\bfnm{Khalid}\binits{K.}}
(\byear{2018}).
\btitle{Impact of metaheuristic iteration on artificial neural network
  structure in medical data}.
\bjournal{Processes}
\bvolume{6}
\bpages{57}.
\end{barticle}
\endbibitem

\bibitem{Schwarz1978}
\begin{barticle}[author]
\bauthor{\bsnm{Schwarz},~\bfnm{Gideon}\binits{G.}}
(\byear{1978}).
\btitle{Estimating the dimension of a model}.
\bjournal{The Annals of Statistics}
\bvolume{6}
\bpages{461--464}.
\end{barticle}
\endbibitem

\bibitem{ShinBhattacharyaJohnson2018}
\begin{barticle}[author]
\bauthor{\bsnm{Shin},~\bfnm{Minsuk}\binits{M.}},
  \bauthor{\bsnm{Bhattacharya},~\bfnm{Anirban}\binits{A.}} \AND
  \bauthor{\bsnm{Johnson},~\bfnm{Valen~E.}\binits{V.~E.}}
(\byear{2018}).
\btitle{Scalable {B}ayesian variable selection using nonlocal prior densities
  in ultrahigh-dimensional settings}.
\bjournal{Statistica Sinica}
\bvolume{28}
\bpages{1053--1078}.
\end{barticle}
\endbibitem

\bibitem{song2023nearly}
\begin{barticle}[author]
\bauthor{\bsnm{Song},~\bfnm{Qifan}\binits{Q.}} \AND
  \bauthor{\bsnm{Liang},~\bfnm{Faming}\binits{F.}}
(\byear{2023}).
\btitle{Nearly optimal {B}ayesian shrinkage for high-dimensional regression}.
\bjournal{Science China Mathematics}
\bvolume{66}
\bpages{409--442}.
\end{barticle}
\endbibitem

\bibitem{SparksKhareGhosh}
\begin{barticle}[author]
\bauthor{\bsnm{Sparks},~\bfnm{Douglas~K.}\binits{D.~K.}},
  \bauthor{\bsnm{Khare},~\bfnm{Kshitij}\binits{K.}} \AND
  \bauthor{\bsnm{Ghosh},~\bfnm{Malay}\binits{M.}}
(\byear{2015}).
\btitle{{Necessary and sufficient conditions for high-dimensional posterior
  consistency under $g$-priors}}.
\bjournal{Bayesian Analysis}
\bvolume{10}
\bpages{627 -- 664}.
\end{barticle}
\endbibitem

\bibitem{Spiegelhalter2002}
\begin{barticle}[author]
\bauthor{\bsnm{Spiegelhalter},~\bfnm{David~J.}\binits{D.~J.}},
  \bauthor{\bsnm{Best},~\bfnm{Nicola~G.}\binits{N.~G.}},
  \bauthor{\bsnm{Carlin},~\bfnm{Bradley~P.}\binits{B.~P.}} \AND
  \bauthor{\bsnm{Van Der~Linde},~\bfnm{Angelika}\binits{A.}}
(\byear{2002}).
\btitle{Bayesian measures of model complexity and fit}.
\bjournal{Journal of the Royal Statistical Society: Series B (Statistical
  Methodology)}
\bvolume{64}
\bpages{583-639}.
\end{barticle}
\endbibitem

\bibitem{Stamey2013}
\begin{barticle}[author]
\bauthor{\bsnm{Stamey},~\bfnm{James~D.}\binits{J.~D.}},
  \bauthor{\bsnm{Natanegara},~\bfnm{Fanni}\binits{F.}} \AND
  \bauthor{\bsnm{Jr.},~\bfnm{John W.~Seaman}\binits{J.~W.~S.}}
(\byear{2013}).
\btitle{Bayesian sample size determination for a clinical trial with correlated
  continuous and binary outcomes}.
\bjournal{Journal of Biopharmaceutical Statistics}
\bvolume{23}
\bpages{790-803}.
\end{barticle}
\endbibitem

\bibitem{su2001molecular}
\begin{barticle}[author]
\bauthor{\bsnm{Su},~\bfnm{Andrew~I}\binits{A.~I.}},
  \bauthor{\bsnm{Welsh},~\bfnm{John~B}\binits{J.~B.}},
  \bauthor{\bsnm{Sapinoso},~\bfnm{Lisa~M}\binits{L.~M.}},
  \bauthor{\bsnm{Kern},~\bfnm{Suzanne~G}\binits{S.~G.}},
  \bauthor{\bsnm{Dimitrov},~\bfnm{Petre}\binits{P.}},
  \bauthor{\bsnm{Lapp},~\bfnm{Hilmar}\binits{H.}},
  \bauthor{\bsnm{Schultz},~\bfnm{Peter~G}\binits{P.~G.}},
  \bauthor{\bsnm{Powell},~\bfnm{Steven~M}\binits{S.~M.}},
  \bauthor{\bsnm{Moskaluk},~\bfnm{Christopher~A}\binits{C.~A.}},
  \bauthor{\bsnm{Frierson~Jr},~\bfnm{Henry~F}\binits{H.~F.}} \betal{et~al.}
(\byear{2001}).
\btitle{Molecular classification of human carcinomas by use of gene expression
  signatures}.
\bjournal{Cancer Research}
\bvolume{61}
\bpages{7388--7393}.
\end{barticle}
\endbibitem

\bibitem{TangBanerjeeMichailidis2011}
\begin{barticle}[author]
\bauthor{\bsnm{Tang},~\bfnm{Runlong}\binits{R.}},
  \bauthor{\bsnm{Banerjee},~\bfnm{Moulinath}\binits{M.}} \AND
  \bauthor{\bsnm{Michailidis},~\bfnm{George}\binits{G.}}
(\byear{2011}).
\btitle{{A two-stage hybrid procedure for estimating an inverse regression
  function}}.
\bjournal{The Annals of Statistics}
\bvolume{39}
\bpages{956 -- 989}.
\end{barticle}
\endbibitem

\bibitem{Tibshirani1996}
\begin{barticle}[author]
\bauthor{\bsnm{Tibshirani},~\bfnm{R.}\binits{R.}}
(\byear{1996}).
\btitle{Regression shrinkage and selection via the lasso}.
\bjournal{Journal of the Royal Statistical Society: Series B (Statistical
  Methodology)}
\bvolume{58}
\bpages{267--288}.
\end{barticle}
\endbibitem

\bibitem{vanderPasKleijnvanderVaart2014}
\begin{barticle}[author]
\bauthor{\bparticle{van~der} \bsnm{Pas},~\bfnm{S.~L.}\binits{S.~L.}},
  \bauthor{\bsnm{Kleijn},~\bfnm{B.~J.~K.}\binits{B.~J.~K.}} \AND
  \bauthor{\bparticle{van~der} \bsnm{Vaart},~\bfnm{A.~W.}\binits{A.~W.}}
(\byear{2014}).
\btitle{{The horseshoe estimator: Posterior concentration around nearly black
  vectors}}.
\bjournal{Electronic Journal of Statistics}
\bvolume{8}
\bpages{2585 -- 2618}.
\end{barticle}
\endbibitem

\bibitem{WagnerTuchler2010}
\begin{barticle}[author]
\bauthor{\bsnm{Wagner},~\bfnm{Helga}\binits{H.}} \AND
  \bauthor{\bsnm{T\"{u}chler},~\bfnm{Regina}\binits{R.}}
(\byear{2010}).
\btitle{Bayesian estimation of random effects models for multivariate responses
  of mixed data}.
\bjournal{Computational Statistics \& Data Analysis}
\bvolume{54}
\bpages{1206-1218}.
\end{barticle}
\endbibitem

\bibitem{MBSPcorrigendum}
\begin{barticle}[author]
\bauthor{\bsnm{Wang},~\bfnm{Shao-Hsuan}\binits{S.-H.}},
  \bauthor{\bsnm{Bai},~\bfnm{Ray}\binits{R.}} \AND
  \bauthor{\bsnm{Huang},~\bfnm{Hsin-Hsiung}\binits{H.-H.}}
(\byear{2022}).
\btitle{Corrigendum to ``{H}igh-dimensional multivariate posterior consistency
  under global-local shrinkage priors'' [{J}. {M}ultivariate {A}nal. 167 (2018)
  157-170]}.
\bjournal{Technical report}.
\end{barticle}
\endbibitem

\bibitem{WangRoy2018}
\begin{barticle}[author]
\bauthor{\bsnm{Wang},~\bfnm{Xin}\binits{X.}} \AND
  \bauthor{\bsnm{Roy},~\bfnm{Vivekananda}\binits{V.}}
(\byear{2018}).
\btitle{Geometric ergodicity of {P}ólya-{G}amma {G}ibbs sampler for {B}ayesian
  logistic regression with a flat prior}.
\bjournal{Electronic Journal of Statistics}
\bvolume{12}
\bpages{3295 -- 3311}.
\end{barticle}
\endbibitem

\bibitem{Watanabe2010}
\begin{barticle}[author]
\bauthor{\bsnm{Watanabe},~\bfnm{Sumio}\binits{S.}}
(\byear{2010}).
\btitle{Asymptotic equivalence of Bayes cross validation and widely applicable
  information criterion in singular learning theory}.
\bjournal{Journal of Machine Learning Research}
\bvolume{11}
\bpages{3571--3594}.
\end{barticle}
\endbibitem

\bibitem{YangWainwrightJordan2016}
\begin{barticle}[author]
\bauthor{\bsnm{Yang},~\bfnm{Yun}\binits{Y.}},
  \bauthor{\bsnm{Wainwright},~\bfnm{Martin~J.}\binits{M.~J.}} \AND
  \bauthor{\bsnm{Jordan},~\bfnm{Michael~I.}\binits{M.~I.}}
(\byear{2016}).
\btitle{On the computational complexity of high-dimensional {B}ayesian variable
  selection}.
\bjournal{The Annals of Statistics}
\bvolume{44}
\bpages{2497 -- 2532}.
\end{barticle}
\endbibitem

\bibitem{YooGhosal2019}
\begin{barticle}[author]
\bauthor{\bsnm{Yoo},~\bfnm{William~Weimin}\binits{W.~W.}} \AND
  \bauthor{\bsnm{Ghosal},~\bfnm{Subhashis}\binits{S.}}
(\byear{2019}).
\btitle{{Bayesian mode and maximum estimation and accelerated rates of
  contraction}}.
\bjournal{Bernoulli}
\bvolume{25}
\bpages{2330 -- 2358}.
\end{barticle}
\endbibitem

\bibitem{ZHANG2022104835}
\begin{barticle}[author]
\bauthor{\bsnm{Zhang},~\bfnm{Ruoyang}\binits{R.}} \AND
  \bauthor{\bsnm{Ghosh},~\bfnm{Malay}\binits{M.}}
(\byear{2022}).
\btitle{Ultra high-dimensional multivariate posterior contraction rate under
  shrinkage priors}.
\bjournal{Journal of Multivariate Analysis}
\bvolume{187}
\bpages{104835}.
\end{barticle}
\endbibitem

\bibitem{zhou2014regularized}
\begin{barticle}[author]
\bauthor{\bsnm{Zhou},~\bfnm{Hua}\binits{H.}} \AND
  \bauthor{\bsnm{Li},~\bfnm{Lexin}\binits{L.}}
(\byear{2014}).
\btitle{Regularized matrix regression}.
\bjournal{Journal of the Royal Statistical Society: Series B (Statistical
  Methodology)}
\bvolume{76}
\bpages{463--483}.
\end{barticle}
\endbibitem

\bibitem{zhou2013negative}
\begin{barticle}[author]
\bauthor{\bsnm{Zhou},~\bfnm{Mingyuan}\binits{M.}} \AND
  \bauthor{\bsnm{Carin},~\bfnm{Lawrence}\binits{L.}}
(\byear{2015}).
\btitle{Negative Binomial Process Count and Mixture Modeling}.
\bjournal{IEEE Transactions on Pattern Analysis and Machine Intelligence}
\bvolume{37}
\bpages{307-320}.
\bdoi{10.1109/TPAMI.2013.211}
\end{barticle}
\endbibitem

\bibitem{ZhouJRSSB2022}
\begin{barticle}[author]
\bauthor{\bsnm{Zhou},~\bfnm{Quan}\binits{Q.}},
  \bauthor{\bsnm{Yang},~\bfnm{Jun}\binits{J.}},
  \bauthor{\bsnm{Vats},~\bfnm{Dootika}\binits{D.}},
  \bauthor{\bsnm{Roberts},~\bfnm{Gareth~O.}\binits{G.~O.}} \AND
  \bauthor{\bsnm{Rosenthal},~\bfnm{Jeffrey~S.}\binits{J.~S.}}
(\byear{2022}).
\btitle{{Dimension-free mixing for high-dimensional Bayesian variable
  selection}}.
\bjournal{Journal of the Royal Statistical Society: Series B (Statistical
  Methodology)}
\bvolume{84}
\bpages{1751-1784}.
\end{barticle}
\endbibitem

\bibitem{ZhouPharmaceutical2006}
\begin{barticle}[author]
\bauthor{\bsnm{Zhou},~\bfnm{Yinghui}\binits{Y.}},
  \bauthor{\bsnm{Whitehead},~\bfnm{John}\binits{J.}},
  \bauthor{\bsnm{Bonvini},~\bfnm{Elisa}\binits{E.}} \AND
  \bauthor{\bsnm{Stevens},~\bfnm{John~W.}\binits{J.~W.}}
(\byear{2006}).
\btitle{Bayesian decision procedures for binary and continuous bivariate
  dose-escalation studies}.
\bjournal{Pharmaceutical Statistics}
\bvolume{5}
\bpages{125-133}.
\end{barticle}
\endbibitem

\bibitem{zhu2012}
\begin{barticle}[author]
\bauthor{\bsnm{Zhu},~\bfnm{Shenghuo}\binits{S.}}
(\byear{2012}).
\btitle{A short note on the tail bound of {W}ishart distribution}.
\bjournal{arXiv:1212.5860}.
\end{barticle}
\endbibitem

\bibitem{ZouJASA2006}
\begin{barticle}[author]
\bauthor{\bsnm{Zou},~\bfnm{Hui}\binits{H.}}
(\byear{2006}).
\btitle{The adaptive lasso and its oracle properties}.
\bjournal{Journal of the American Statistical Association}
\bvolume{101}
\bpages{1418--1429}.
\end{barticle}
\endbibitem

\end{thebibliography}

\end{document}